\setlist[itemize]{itemsep=0pt, topsep=0pt}
\theoremstyle{plain}
\newtheorem{theorem}{\bf Theorem}[section]
\newtheorem{lemma}[theorem]{\bf Lemma}
\newtheorem{proposition}[theorem]{\bf Proposition}
\newtheorem{corollary}[theorem]{\bf Corollary}
\theoremstyle{remark}
\newtheorem{remark}{\bf Remark}
\newtheorem{construction}{\bf Construction}
\newtheorem{assumption}{\bf Assumption}
\numberwithin{equation}{section}
\newcommand{\spn}[2]{\genfrac{\{}{\}}{0pt}{}{#1}{#2}}
\title{\bf\Large Erd\H{o}s-Ko-Rado theorem and Hilton-Milner type theorem for $k$-partitions}
\author[1]{Jie Wen\thanks{E-mail: \text{jwen@mail.bnu.edu.cn}}}
\author[1]{Benjian Lv\thanks{Corresponding author. E-mail: \text{bjlv@bnu.edu.cn}}}
\affil[1]{\small Laboratory of Mathematics and Complex Systems (Ministry of Education), School of Mathematical Sciences, Beijing Normal University, Beijing 100875, China}
\date{}
\begin{document}
	\renewcommand{\baselinestretch}{1.2}
	\maketitle
	\begin{abstract}
	A $k$-partition of an $n$-set $X$ is a collection of $k$ pairwise disjoint non-empty subsets whose union is $X$. A family of $k$-partitions of $X$ is called $t$-intersecting if any two of its members share at least $t$ blocks. A $t$-intersecting family is trivial if every $k$-partition in it contains $t$ fixed blocks, and is non-trivial otherwise. In this paper, we first prove that, for $n\geq L(k,t):=(t+1)+(k-t+1)\cdot\log_2(t+1)(k-t+1)$, a $t$-intersecting family with maximum size must consist of all $k$-partitions containing $t$ fixed singletons. This improves the results given by Erd\H{o}s and Sz\'{e}kely (2000), and by Kupavskii (2023). We further determine the non-trivial $t$-intersecting families of $k$-partitions with maximum size for $n \ge 2L(k,t)$, which turn out to be natural analogs of the corresponding families for finite sets. 
In addition, we prove a stability result.

		\medskip
		\noindent {\em AMS classification:}\;05D05
		
		\noindent {\em Key words:}\;$k$-partition;\;Erd\H{o}s-Ko-Rado theorem;\;Hilton-Milner type theorem;\;stability
		
	\end{abstract}
	\section{Introduction}
	For
	the sake of convenience, lowercase letters denote positive integers, unless otherwise stated. We use $[n]=\{1,2,\ldots,n\}$ to denote the standard $n$-set, and set $[i,j]=\{i,i+1,\ldots,j\}$ for $i\leq j$. A \emph{family} is just a collection of sets. For a family $\mathcal{F}$, we denote $\cup\mathcal{F}:=\cup_{F\in\mathcal{F}}F$ and $\cap\mathcal{F}:=\cap_{F\in\mathcal{F}}F$ for short. Denote by $\binom{[n]}{k}$ the family of all $k$-subsets of $[n]$. A family $\mathcal{F}\subseteq\binom{[n]}{k}$ is \emph{$t$-intersecting} if $|F\cap F'|\geq t$ for all $F,F'\in\mathcal{F}$. We write  $\overline{A}=[n]\setminus A$ for $A\subseteq[n]$.
	
	The seminal Erdős–Ko–Rado theorem \cite{Erdos-Ko-Rado-1961} initiated the study of intersection problems, which has ever since remained a significant area within extremal combinatorics. The theorem  states that, if $n$ is sufficiently large depending on $k$ and $t$, then every $t$-intersecting family of  $\binom{[n]}{k}$ has size at most $\binom{n-t}{k-t}$, and every family achieving the maximum must consist of all $k$-subsets which contain some fixed $t$-subset. Denote by $n_0(k,t)$ the least possible value for $n$ such that the upper bound in the Erd\H{o}s-Ko-Rado theorem holds. The original paper \cite{Erdos-Ko-Rado-1961} proved $n_0(k,1)=2k$, and established $n_0(k,t)\leq(k-t)\binom{k}{t}^3+t$. Frankl \cite{Frankl-1976} made a breakthrough by determining $n_0(k,t)=(t+1)(k-t+1)$ for $t\geq 15$. Wilson \cite{Wilson-1984} subsequently showed $n_0(k,t)=(t+1)(k-t+1)$ for $2\leq t\leq14$, via an elegant algebraic proof valid for all $t\geq1$. In \cite{Frankl-1976}, Frankl made a general conjecture on the maximum size of $t$-intersecting families of $\binom{[n]}{k}$. This conjecture was proved for $n>c\sqrt{t/\log t}(k-t+1)$ by Frankl and F{\"u}redi \cite{Frankl--Furedi-1991}. Ahlswede and Khachatrian \cite{Ahlswede-Khachatrian-1997} completely settled the problem by establishing the famous complete intersection theorem, which determines $t$-intersecting families of $\binom{[n]}{k}$ with maximum size for all values of $n, k$, and $t$.
	
	A $t$-intersecting family $\mathcal{F}\subseteq\binom{[n]}{k}$ is \emph{trivial} if $|\cap\mathcal{F}|\geq t$, and is \emph{non-trivial} otherwise. The family $\mathcal{F}$ is \emph{maximal} if $\mathcal{F}\cup\{A\}$ is not $t$-intersecting for any $A\in\binom{[n]}{k}\setminus\mathcal{F}$. Note that every optimal family in the Erd\H{o}s-Ko-Rado theorem is trivial. The classical Hilton-Milner theorem \cite{Hilton-Milner-1967} gives the maximum size of non-trivial $1$-intersecting families, with a characterization of extremal configurations. A significant step was taken in \cite{Frankl-1978} by Frankl, who determined non-trivial $t$-intersecting families with maximum size for $t\geq2$ and large $n$. Ahlswede and Khachatrian \cite{Ahlswede-Khachatrian-1996} determined such families for all possible parameters. Then it is natural to study the structure of large non-trivial $t$-intersecting families. The problem has been well explored, yielding several insightful results, and these can be regarded as stability results for the three classical results mentioned above. Han and Kohayakawa \cite{Han-Kohayakawa} described the structure of the third largest maximal $1$-intersecting families, and proposed a question that what is the size of the fourth largest one. For results on this question, see \cite{Kostochka-Mubayi,Kupavskii-2025,Huang-Peng,Wu et al.,Kupavskii-2018}. Recently, Ge, Xu and Zhao \cite{Ge-Xu-Zhao} developed an unified approach to proving stability results. For general $t\geq1$, Cao, the second author and Wang \cite{Lv-2021} described the structure of maximal non-trivial $t$-intersecting families with nearly maximum size. 
	
The notion of intersection arises naturally for many objects, and it turns out that investigating intersection problems has given rise to a variety of elegant works. For systematic introduction on intersection problems, we refer the readers to survey papers \cite{Deza-Frankl-1983,Frankl-Tokushige-2016} and monographs \cite{Frankl-Tokushige-book,Godsil-Meagher-book}. 
	
 A \emph{partition} is a set of pairwise disjoint non-empty subsets. A member of a partition is said to be a \emph{block}. We say that a partition is a \emph{partition of $M$} if the union of its blocks equals $M$. A family of partitions of $[n]$ is said to be \emph{$t$-intersecting} if every two of its members has at least $t$ blocks in common. The intersection problem for partitions constitutes one of the \emph{high order extremal problems}, namely, extremal problems in which the set systems under consideration are collections of families. Ku and Renshaw \cite{Ku-Renshaw-2008}, Ku and Wong \cite{Ku-Wong-2013,Ku-Wong-2015,Ku-Wong-2020}, and Blinovsky \cite{B} established several intersection theorems for partitions of $[n]$. Meagher and Moura \cite{Meagher-Moura-2005} proved an Erd\H{o}s-Ko-Rado theorem for partitions of  $[ck]$ whose every block has size $c$. See \cite{Godsil-Meagher-2017,Lindzey-2017} for an algebraic approach valid when $c=2$.

 In this paper, we focus on the intersection problem for \emph{$k$-partitions}, that is, partitions with $k$ blocks. We use $\spn{[n]}{k}$ to denote the collection of such partitions of $[n]$. It is well known that the {Stirling partition number}
	  \begin{equation}\label{equspn}
	  	\spn{n}{k}=\frac{1}{k!}\sum_{j=0}^{k}(-1)^j\binom{k}{j}(k-j)^n
	  \end{equation}
	  counts the size of $\spn{[n]}{k}$. Set $\spn{n}{a}=0$ for $a\leq0$. Let $\mathcal{F}\subseteq\spn{[n]}{k}$ be a $t$-intersecting family. It is \emph{trivial} if every member of it contains a common specified $t$-partition and is \emph{non-trivial} otherwise. It is \emph{maximal} if $\mathcal{F}=\mathcal{G}$ for each $t$-intersecting family $\mathcal{G}\subseteq\spn{[n]}{k}$ with $\mathcal{F}\subseteq\mathcal{G}$. It is routine to check that the maximal trivial $t$-intersecting families are exactly those given in the following  construction.
	  \begin{construction}\label{familyf}
		Let $X$ be a $t$-partition. Define
		\begin{align*}
			\mathcal{S}(X):=\left\{F\in\spn{[n]}{k}:X\subseteq F\right\}.
		\end{align*}
Clearly,
\begin{equation}\label{equntkt}
	|\mathcal{S}(X)|=\spn{n-|\cup X|}{k-t}\leq\spn{n-t}{k-t},
\end{equation}
with equality precisely if $|\cup X|=t$, namely, $X$ consists of singletons.	
	\end{construction}
	
	In 2000, P. L. Erd\H{o}s and Sz\'{e}kely \cite{Erdos-Szekely-2000} proved a $k$-partition version of Erd\H{o}s-Ko-Rado theorem. Precisely, if $k>t\geq1$ and $n$ is large enough with respect to $k$, then every $t$-intersecting family in $\spn{[n]}{k}$ has at most $\spn{n-t}{k-t}$ members. Recently, Kupavskii \cite{Kupavskii-2023} applied the spread approximation method \cite{Kupavskii-2024} to address Erd\H{o}s-Ko-Rado type questions on several classes of partitions, including proving that, for $n\geq2k\log_2n$ and $n\geq48$, every maximum-sized $t$-intersecting family in $\spn{[n]}{k}$ consists of all $k$-partitions with $t$ fixed singletons. Our first main result provide an improved lower bound for $n$. Clearly, two distinct $(t+1)$-partitions of $[n]$ share at most $t-1$ blocks. Hence let us suppose $k\geq t+2$. Set
	\begin{equation}\label{equfunlkt}
		L(k,t)=(t+1)+(k-t+1)\cdot\log_2(t+1)(k-t+1).
	\end{equation}
	Note that $L(k,t)\leq L(k,1)=2+k+k\log_2k$.  
	\begin{theorem}\label{thm1}
		Let $k\geq t+2$ and $n\geq L(k,t)$. If  $\mathcal{F}\subseteq\spn{[n]}{k}$ is $t$-intersecting, then
		$$|\mathcal{F}|\leq\spn{n-t}{k-t}.$$
		Equality holds only if $\mathcal{F}$ consists of all $k$-partitions with $t$ fixed singletons.  
	\end{theorem}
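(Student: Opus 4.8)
The plan is to view each $k$-partition as the $k$-element set of its blocks, so that $\mathcal{F}$ becomes a family of $k$-subsets of the universe of non-empty subsets of $[n]$, and ``$t$-intersecting'' literally means that any two members share at least $t$ blocks. By Construction \ref{familyf} it suffices to establish the dichotomy: either $\mathcal{F}$ is trivial, in which case $\mathcal{F}\subseteq\mathcal{S}(X)$ for some $t$-partition $X$ and \eqref{equntkt} already gives $|\mathcal{F}|\le\spn{n-t}{k-t}$ with equality forcing $X$ to consist of $t$ singletons; or $\mathcal{F}$ is non-trivial, and then I would prove the strict inequality $|\mathcal{F}|<\spn{n-t}{k-t}$. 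Thus the entire theorem, including the characterization of equality, reduces to showing that a non-trivial $t$-intersecting family is strictly smaller than the extremal trivial one.

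The engine is a decay estimate for the sizes $|\mathcal{S}(A)|=\spn{n-|\cup A|}{k-|A|}$ as the block-set $A$ grows. Fixing one member $P_0\in\mathcal{F}$, every $Q\in\mathcal{F}$ contains at least one of the $\binom{k}{t}$ sets $T$ made of $t$ blocks of $P_0$, so $\mathcal{F}=\bigcup_T\mathcal{F}_T$ with $\mathcal{F}_T\subseteq\mathcal{S}(T)$ and $|\mathcal{F}_T|\le\spn{n-|\cup T|}{k-t}$. The first lemma I would prove bounds the ratio $\spn{n-m}{k-t}/\spn{n-t}{k-t}$ and shows that it decays geometrically once $m>t$. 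Rewriting the hypothesis $n\ge L(k,t)$ in the equivalent form $2^{(n-t-1)/(k-t+1)}\ge(t+1)(k-t+1)$ makes transparent that, at the threshold, this per-element decay dominates the polynomial multiplicity of the $t$-subsets $T$ having a prescribed union size. Summing the estimates then shows that the total contribution of all $T$ with $|\cup T|>t$ (that is, all $T$ not consisting of $t$ singletons) is dominated by the single all-singleton term, which is exactly $\spn{n-t}{k-t}$.

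The difficulty is that this union bound overcounts, so by itself it does not separate the trivial from the non-trivial case: a priori several singleton-type $T$ could each contribute a full $\spn{n-t}{k-t}$. To eliminate the overcounting I would invoke a spread decomposition adapted to partitions: extract from $\mathcal{F}$ a bounded family of block-set ``kernels'' $S_1,S_2,\dots$ by repeatedly pulling out a most popular small block-set, leaving an $r$-spread remainder with $r$ tuned to the geometric decay rate above. The $t$-intersecting hypothesis forces the kernels to be pairwise compatible, and the bound $n\ge L(k,t)$ is exactly what makes $r$ large enough that the $r$-spread remainder is too small to matter and that the kernels must collapse to a single $t$-element block-set of singletons. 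I expect this spread/kernel analysis --- quantitatively matching the constant in $L(k,t)$ and proving the collapse --- to be the main obstacle; the appearance of the ordinary-set threshold $(t+1)(k-t+1)$ inside $L(k,t)$ suggests that the collapse step mirrors the classical analysis for $\binom{[n]}{k}$. Finally, for equality I would track the decay estimates: $|\mathcal{F}|=\spn{n-t}{k-t}$ forces every member to contain a common $t$-set of singleton blocks and forbids any further member, giving $\mathcal{F}=\mathcal{S}(X)$ with $X$ consisting of $t$ singletons.
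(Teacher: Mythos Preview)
Your reduction to the non-trivial case is correct, and you rightly diagnose that the naive union bound over $t$-subsets of a single fixed member $P_0$ is too weak. But the remedy you propose --- a spread/kernel decomposition --- is precisely Kupavskii's method, and in his hands it delivered only $n\ge 2k\log_2 n$, the very bound this paper is written to improve. You assert that ``the bound $n\ge L(k,t)$ is exactly what makes $r$ large enough'' for the kernels to collapse to a single $t$-set of singletons, yet give no mechanism for this collapse; sharpening the spread analysis to hit $L(k,t)$ is not routine bookkeeping, and you yourself flag it as ``the main obstacle'' without indicating how to surmount it. As it stands, the proposal does not contain a proof of the non-trivial case at the stated threshold.

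The paper bypasses spreads entirely. The idea you are missing is to iterate the $t$-intersecting hypothesis through the \emph{$t$-covering number} $\tau=\tau_t(\mathcal{F})$. Take a minimum $t$-cover $T$ (so $|T|=\tau$) and write $\mathcal{F}=\bigcup_{H}\mathcal{F}_H$ over the $\binom{\tau}{t}$ choices of $H\in\binom{T}{t}$. If $\tau>t$ then no such $H$ is itself a $t$-cover, so for each $H$ one can pick $G\in\mathcal{F}$ with $|H\cap G|<t$; every $F\in\mathcal{F}_H$ must then share $t$ blocks with $G$, which decomposes $\mathcal{F}_H$ over at most $k-t+1$ strictly larger partitions $H'\subseteq H\cup G$ (Lemma~\ref{lemmainductive}). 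Iterating until the running partition reaches size at least $\tau$ (Lemma~\ref{lemmakey}) gives $|\mathcal{F}_H|\le (k-t+1)^{\tau-t}\spn{n-\tau}{k-\tau}$ (or the boundary term $(k-t+1)^{k-t}$), and summing over $H$ yields $|\mathcal{F}|\le\max\{f(\tau,k,t,n),f(k,k,t,n)\}$. The hypothesis $n\ge L(k,t)$ is then used exactly once, in Lemma~\ref{lemmamono}, to show $f(m,k,t,n)\le\spn{n-t}{k-t}$ for all $m\in[t,k]$ with equality only at $m=t$; hence $\tau>t$ forces strict inequality. This elementary cover-refinement argument, not spread approximation, is what reaches $L(k,t)$.
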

We will see in Remark \ref{rmk} that, if $k\sim(1+a)t$ as $t\to\infty$ for some fixed $a>0$, then the least value of $n$ for which the theorem above holds equals $\Theta(L(k,t))$. Our remaining main results focus on the structure of large non-trivial $t$-intersecting families. The key tool is the notion of a $t$-cover. Let $\mathcal{P}$ be a family of partitions. A \emph{$t$-cover} of $\mathcal{P}$ is a partition $T$ such that $|T\cap P|\geq t$ for each $P\in\mathcal{P}$, and if such a partition exists, then the \emph{$t$-covering number} $\tau_t(\mathcal{P})$ of $\mathcal{P}$ is defined to be the minimum size of a $t$-cover. Of course $\tau_t(\mathcal{P})\geq t$. We note that a $t$-cover is not required to be a partition of the universe set. Let $\mathcal{F}\subseteq\spn{[n]}{k}$ be a $t$-intersecting family, then every member of $\mathcal{F}$ itself forms a $t$-cover, and so $\tau_t(\mathcal{F})\leq k$. The family $\mathcal{F}$ is non-trivial precisely if it has $t$-covering number at least $t+1$. Let $B\subseteq[n]$, denote by $$[B]:=\{\{i\}:i\in B\}$$ the partition of $B$ containing only singletons. Let us introduce the following two structures of non-trivial $t$-intersecting families.
  \begin{construction}\label{familya}
  	Let $Z$ be a $(t+2)$-partition. Define
  	\begin{align*}
  		\mathcal{A}(Z):=\left\{F\in\spn{[n]}{k}:|F\cap Z|\geq t+1\right\}.
  	\end{align*}
  	Set $\mathcal{A}(n,k,t):=\mathcal{A}([[t+2]])$.
  \end{construction}
  \begin{construction}\label{familyh}
  	Let $M$ be a $k$-partition and let $X\subseteq M$ with $|X|=t$. Define
  	\begin{align*}
  		\mathcal{H}(X,M):=&\left\{F\in\spn{[n]}{k}:X\subseteq F,\;F\cap(M\setminus X)\neq\emptyset\right\}\nonumber\\
  		&\cup\{(M\setminus\{B\})\cup\{B\cup\left(\overline{\cup M}\right)\}:B\in X\}.
  	\end{align*}
  	Set $\mathcal{H}(n,k,t):=\mathcal{H}([[t]],[[k]])$ and $\mathcal{H}_1(n,k,t):=\mathcal{H}([[t]],[[k-1]]\cup\{\{k,k+1\}\})$.
  \end{construction}
 It is readily seen that each family confirming to one of the two  structures is of $t$-covering number $t+1$. The structure of non-trivial $t$-intersecting families in $\spn{[n]}{t+2}$ is quite clear. We will see in Proposition \ref{propt+2} that, if $k=t+2$ and  $\mathcal{F}\subseteq\spn{[n]}{k}$ is a maximal non-trivial $t$-intersecting family, then $\mathcal{F}=\mathcal{A}(Z)$ for some $(t+2)$-partition $Z$, and particularly $\mathcal{F}$ has exactly $t+2$ members.  Therefore, let us suppose $k\geq t+3$. 
 To present our results, we say that two families $\mathcal{G}_1,\mathcal{G}_2\subseteq\spn{[n]}{k}$ are \emph{isomorphic}, denoted $\mathcal{G}_1\cong\mathcal{G}_2$, if they are the same up to some permutation on $[n]$. The following result determines non-trivial $t$-intersecting families with maximum size.
\begin{theorem}\label{thm2}
		Let $k\geq t+3$ and $n\geq2L(k,t)$. Suppose that  $\mathcal{F}\subseteq\spn{[n]}{k}$ is a non-trivial $t$-intersecting family. Then
		$$|\mathcal{F}|\leq\max\{|\mathcal{A}(n,k,t)|,|\mathcal{H}(n,k,t)|\}.$$
		Moreover, if equality holds, then
		\begin{itemize}
			\item[\rm(i)] $\mathcal{F}\cong\mathcal{H}(n,k,t)$ for $k\geq2t+3$, or
			\item[\rm(ii)] $\mathcal{F}\cong\mathcal{A}(n,k,t)$ for $k\leq 2t+2$, or $(k,t)=(4,1)$ and $\mathcal{F}\cong\mathcal{H}(n,4,1)$.
		\end{itemize}
	\end{theorem}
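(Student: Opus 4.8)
The entire argument is organized around the $t$-covering number $\tau_t(\mathcal{F})$, which exceeds $t$ precisely because $\mathcal{F}$ is non-trivial. Since both $\mathcal{A}(n,k,t)$ and $\mathcal{H}(n,k,t)$ have $t$-covering number $t+1$, the first milestone is to show that every near-extremal family satisfies $\tau_t(\mathcal{F})=t+1$. The plan is to prove the sharper statement that $\tau_t(\mathcal{F})=s$ forces $|\mathcal{F}|=O_{k,t}\big(\spn{n-s}{k-s}\big)$. Fix a minimum $t$-cover $T=\{B_1,\dots,B_s\}$; each $F\in\mathcal{F}$ contains at least $t$ of the $B_i$, and I would stratify $\mathcal{F}$ by the number $r$ of blocks of $T$ that $F$ omits. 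Because no proper subset of $T$ is again a $t$-cover, for each omitted block there is a witness member of $\mathcal{F}$ forcing $F$ to contain one additional block drawn from a fixed finite pool; carrying this out for all $r$ omitted blocks pins $F$ down to $s$ prescribed blocks, so there are at most $O_{k,t}\big(\spn{n-s}{k-s}\big)$ such members. For $s\ge t+2$ this is of smaller order than $\spn{n-t-1}{k-t-1}\le\max\{|\mathcal{A}|,|\mathcal{H}|\}$ once $n\ge 2L(k,t)$, so henceforth we may assume $\tau_t(\mathcal{F})=t+1$. The delicate point is that the witness-forced blocks may coincide; resolving this by a sunflower/kernel bookkeeping in the spirit of the classical set case is where the hypothesis $n\ge 2L(k,t)$ first enters.

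Assume now $\tau_t(\mathcal{F})=t+1$ and fix a minimum $t$-cover $T=\{B_1,\dots,B_{t+1}\}$. Every member omits at most one block of $T$, giving $\mathcal{F}=\mathcal{F}_0\sqcup\mathcal{F}_1\sqcup\cdots\sqcup\mathcal{F}_{t+1}$, where $\mathcal{F}_0=\{F:T\subseteq F\}$ and $\mathcal{F}_i=\{F:T\setminus\{B_i\}\subseteq F,\ B_i\notin F\}$. A member of $\mathcal{F}_0$ carries all $t+1$ blocks of $T$, so $|\mathcal{F}_0|\le\spn{n-|\cup T|}{k-t-1}\le\spn{n-t-1}{k-t-1}$, with equality only when $|\cup T|=t+1$, that is, when $T$ consists of singletons. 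Since every surviving Stirling factor is monotone in $|\cup T|$, any family meeting the bound must have $T$ a set of $t+1$ singletons, so I may take $T=[[t+1]]$ and absorb the non-singleton case into a strict inequality.

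The heart of the proof is to bound $\sum_{i=1}^{t+1}|\mathcal{F}_i|$. For $i\ne j$, members $F\in\mathcal{F}_i$ and $F'\in\mathcal{F}_j$ already share the $t-1$ blocks of $T\setminus\{B_i,B_j\}$, so the $t$-intersecting condition forces one further common block, which must lie in $[n]\setminus[t+1]$. Removing from each $F\in\mathcal{F}_i$ the $t$ singletons it shares with $T$ leaves $k-t$ blocks; discarding the one block that absorbs the element $i$, the surviving blocks form a partial partition inside $[n]\setminus[t+1]$, and the cross-condition makes these surviving families pairwise cross-intersecting on the large set $[n]\setminus[t+1]$, which remains large enough to apply Theorem \ref{thm1}. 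Invoking that bound for the links, I would show $\sum_i|\mathcal{F}_i|$ is maximized in exactly one of two ways: either all links share one common extra singleton $\{t+2\}$, whence $\mathcal{F}\subseteq\mathcal{A}([[t+2]])$ and $\sum_i|\mathcal{F}_i|\sim (t+1)\spn{n-t-1}{k-t-1}$; or a single link is enlarged to all admissible members while non-triviality forces the remaining $t$ links to retain exactly the lone witnesses that become the special members, whence $\mathcal{F}\subseteq\mathcal{H}(X,M)$ and $\sum_i|\mathcal{F}_i|\sim (k-t-1)\spn{n-t-1}{k-t-1}$.

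Adding $|\mathcal{F}_0|$, the two regimes give totals with leading terms $(t+2)\spn{n-t-1}{k-t-1}$ and $(k-t)\spn{n-t-1}{k-t-1}$; comparing the coefficients $t+2$ and $k-t$ yields the dichotomy, with $\mathcal{H}(n,k,t)$ winning for $k\ge 2t+3$ and $\mathcal{A}(n,k,t)$ for $k\le 2t+2$. The chief obstacle is precisely the cross-intersecting optimization of the previous step: proving that no intermediate configuration beats these two extremes, and pinning down the equality cases exactly rather than only up to leading order. This is most delicate at the balanced boundary $k=2t+2$, where the two leading coefficients coincide and the winner is decided by the lower-order $\spn{n-t-2}{k-t-2}$ terms; a careful count there shows $\mathcal{A}$ strictly wins except when $(k,t)=(4,1)$, where the two families have equal size and both occur as extremal configurations. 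The remaining bookkeeping—that equality throughout forces $T$ to be singletons, the common extra block to be a singleton, and the small links to be exactly the prescribed witnesses—then upgrades the size bound to the stated isomorphism classification.
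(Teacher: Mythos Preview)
Your overall architecture---first ruling out $\tau_t(\mathcal{F})\ge t+2$, then analyzing the case $\tau_t(\mathcal{F})=t+1$---matches the paper, and your treatment of the first step is close in spirit to Lemma~\ref{lemmakey} and the bound $|\mathcal{F}|\le f(\tau_t(\mathcal{F}),k,t,n)$. The divergence, and the gap, is in the second step.

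You fix a \emph{single} minimum $t$-cover $T$, decompose $\mathcal{F}=\mathcal{F}_0\sqcup\mathcal{F}_1\sqcup\cdots\sqcup\mathcal{F}_{t+1}$, and assert that the links become pairwise cross-intersecting families of partial partitions on $[n]\setminus[t+1]$, ``which remains large enough to apply Theorem~\ref{thm1}.'' But Theorem~\ref{thm1} is an Erd\H{o}s--Ko--Rado bound for a single $t$-intersecting family of $k$-partitions of $[n]$; it says nothing about cross-intersecting systems of partial partitions, and no such cross-intersecting theorem is available in the paper. You then claim the optimum is attained in exactly one of two configurations (all links share a common extra singleton, or one link is full and the others are singletons), but you yourself flag this as ``the chief obstacle'' and do not prove it. That dichotomy is the entire content of the theorem in the $\tau_t=t+1$ regime, so leaving it as an assertion is a genuine gap, not a routine bookkeeping step. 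In particular, nothing in your outline excludes hybrid configurations where several $\mathcal{F}_i$ are of intermediate size, nor does it explain why the extra shared block is forced to be a singleton rather than a larger set.

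The paper avoids this cross-intersecting optimization entirely by working with the \emph{whole family} $\mathcal{T}$ of minimum $t$-covers rather than a single one. It first shows (Lemma~\ref{lemmanotintsmall}) that if two members of $\mathcal{T}$ meet in fewer than $t$ blocks then $|\mathcal{F}|$ is already small; so $\mathcal{T}$ is itself $t$-intersecting. Then a short case analysis on $|\cap\mathcal{T}|$ and $|\cup\mathcal{T}|$ (Lemmas~\ref{lemmatnontrivial}--\ref{lemmattrivial3}) directly forces $\mathcal{F}$ into $\mathcal{A}(Z)$ or $\mathcal{H}(X,M)$. The point is that the structure of $\mathcal{T}$ encodes exactly the dichotomy you are trying to extract from a cross-intersecting argument, and it does so without any optimization over intermediate configurations.
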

	One notable feature of this result might be that our extremal families $\mathcal{H}(n,k,t)$ and $\mathcal{A}(n,k,t)$ serve as natural analogs of those in the classical results given in \cite{Hilton-Milner-1967} and \cite{Frankl-1978}. 
Let $n\geq2L(k,t)$ and $\mathcal{F}\subseteq\spn{[n]}{k}$ be a maximal $t$-intersecting family with size smaller than $\spn{n-t}{k-t}$. As an immediate consequence of Theorem \ref{thm2}(see Corollary \ref{coro}), we have  $|\mathcal{F}|\leq\spn{n-t-1}{k-t}<\frac{1}{k-t}\spn{n-t}{k-t}$. We remark that this gives a slightly stronger version of a result in \cite{Kupavskii-2023}, in which it is shown that $|\mathcal{F}|\leq\frac{1}{2}\spn{n-t}{k-t}$. This also serves as a stability result of Theorem \ref{thm1} for $n\geq2L(k,t)$. Our next main result is a stability result for Theorem \ref{thm2}.
\begin{theorem}\label{thm3}
	Let $k\geq t+3$ and $n\geq2L(k,t)$, and let   $\mathcal{F}\subseteq\spn{[n]}{k}$ be a maximal non-trivial $t$-intersecting family. The following hold.
	\begin{itemize}
		\item[\rm(i)]Suppose $k\geq2t+3$. If   $|\mathcal{F}|\geq|\mathcal{H}(n,k,t)|-\spn{n-t-1}{k-t-1}+\spn{n-t-2}{k-t-1}$, then $\mathcal{F}$ is isomorphic to either $\mathcal{H}(n,k,t)$ or $\mathcal{H}_1(n,k,t)$.
		\item[\rm(ii)]Suppose $t+3\leq k\leq2t+2$. If $|\mathcal{F}|>|\mathcal{H}(n,k,t)|$, 
		then $\mathcal{F}=\mathcal{A}(Z)$ for some  $(t+2)$-partition $Z$.
	\end{itemize}
\end{theorem}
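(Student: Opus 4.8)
The plan is to organize maximal non-trivial $t$-intersecting families by their $t$-covering number and to read off both the extremal and the near-extremal structures from a careful analysis of the case $\tau_t(\mathcal{F})=t+1$. Both $\mathcal{A}(n,k,t)$ and $\mathcal{H}(n,k,t)$ have $t$-covering number exactly $t+1$, and the estimates underlying Theorem \ref{thm2} bound every $\mathcal{F}$ with $\tau_t(\mathcal{F})\geq t+2$ well below the thresholds appearing in (i) and (ii); so the first step is to reduce to $\tau_t(\mathcal{F})=t+1$ and fix a $(t+1)$-cover $T=\{B_1,\dots,B_{t+1}\}$. Since every $F\in\mathcal{F}$ satisfies $|F\cap T|\geq t$, it omits at most one block of $T$, giving the disjoint decomposition $\mathcal{F}=\mathcal{F}_0\sqcup\mathcal{F}_1\sqcup\cdots\sqcup\mathcal{F}_{t+1}$, where $\mathcal{F}_0=\{F\in\mathcal{F}:T\subseteq F\}$ and $\mathcal{F}_i=\{F\in\mathcal{F}:T\setminus\{B_i\}\subseteq F,\ B_i\notin F\}$.

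Three observations then drive the argument: (a) any $F\in\mathcal{F}_0$ already $t$-intersects every member of $\mathcal{F}$, so $\mathcal{F}_0$ is unconstrained and $|\mathcal{F}_0|\leq\spn{n-|\cup T|}{k-t-1}\leq\spn{n-t-1}{k-t-1}$, the last equality forcing $T=[B]$ for some $(t+1)$-set $B$; (b) each $\mathcal{F}_i$ is $t$-intersecting for free; and (c) for $i\neq j$ any $F\in\mathcal{F}_i$ and $F'\in\mathcal{F}_j$ share only $t-1$ blocks inside $T$, hence must share a block lying outside $T$. Thus $\mathcal{F}_1,\dots,\mathcal{F}_{t+1}$ are pairwise cross-intersecting through their outside blocks. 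Using the size hypothesis together with (a), I would first show that in the near-extremal regime $T$ must consist of singletons and $\mathcal{F}_0$ must be essentially all of $\{F:T\subseteq F\}$; the maximality of $\mathcal{F}$ then promotes ``essentially all'' to ``all''.

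It remains to maximize $\sum_{i\geq1}|\mathcal{F}_i|$ under the cross-intersection condition (c), and the maximizers fall into two types matching the two extremal shapes. If all of $\mathcal{F}_1,\dots,\mathcal{F}_{t+1}$ contain one common outside block $C$, then each $\mathcal{F}_i$ may be taken to consist of all partitions containing $(T\cup\{C\})\setminus\{B_i\}$, and in the extremal case $Z=T\cup\{C\}$ is a $(t+2)$-partition reproducing $\mathcal{A}(Z)$. Alternatively, one part (say $\mathcal{F}_{t+1}$) is large while each remaining $\mathcal{F}_i$ is pinned to ``special'' members carrying every block a member of $\mathcal{F}_{t+1}$ could use to meet it; reconstructing these blocks exhibits a governing $k$-partition $M\supseteq X:=T\setminus\{B_{t+1}\}$ and yields $\mathcal{H}(X,M)$. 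Comparing sizes, the common-block shape is larger precisely when $k\leq2t+2$; this, together with the exact value of $|\mathcal{A}(n,k,t)|$, gives part (ii): if $|\mathcal{F}|>|\mathcal{H}(n,k,t)|$ the single-large-part mechanism cannot occur, and tracing the equality conditions identifies $\mathcal{F}=\mathcal{A}(Z)$.

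The main difficulty is the fine stability statement (i), which lives inside a window of width $\spn{n-t-1}{k-t-1}-\spn{n-t-2}{k-t-1}$, too narrow to permit discarding lower-order terms. The family $\mathcal{H}_1(n,k,t)$ arises from $\mathcal{H}(n,k,t)$ by letting a single singleton block of the governing partition $M$ absorb one extra point, i.e.\ replacing $\{k\}$ by $\{k,k+1\}$, and $|\mathcal{H}(n,k,t)|-|\mathcal{H}_1(n,k,t)|$ equals the window width up to a strictly smaller correction of order $\spn{n-t-2}{k-t-2}$. After normalizing $T=[B]$ with $\mathcal{F}_0$ full, the plan is to show that the blocks of $M\setminus X$ which the special members must carry are all singletons (giving $\mathcal{H}$) except that at most one may have size two (giving $\mathcal{H}_1$), any further enlargement of these blocks or additional deviation costing at least one more factor comparable to $\spn{n-t-2}{k-t-2}$ and thereby dropping $|\mathcal{F}|$ below the threshold. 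The real work is the sharp bookkeeping of these Stirling-number differences and the verification that each extra constraint on the outside blocks is this expensive, rather than any single decisive idea.
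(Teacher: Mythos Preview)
Your plan diverges from the paper's in a structural way: you fix a single $(t+1)$-cover $T$ and analyze the cross-intersection of the pieces $\mathcal{F}_1,\dots,\mathcal{F}_{t+1}$, whereas the paper works with the \emph{entire} family $\mathcal{T}$ of $(t+1)$-covers and classifies $\mathcal{F}$ according to $|\cap\mathcal{T}|$ and $|\cup\mathcal{T}|$. The paper first shows (Lemma~\ref{lemmanotintsmall}) that if two members of $\mathcal{T}$ share fewer than $t$ blocks then $|\mathcal{F}|<r(n,k,t)$; after that, either $|\cap\mathcal{T}|<t$, which forces $\mathcal{F}=\mathcal{A}(Z)$ outright (Lemma~\ref{lemmatnontrivial}), or $|\cap\mathcal{T}|=t$, in which case $\cup\mathcal{T}$ is itself a partition of size at most $k$ and the remaining cases split cleanly by $|\cup\mathcal{T}|$ (Lemmas~\ref{lemmattrivial1}, \ref{lemmattrivial3baby}, \ref{lemmattrivial3}). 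This global viewpoint is what produces an exhaustive case analysis.

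Your outline has a concrete gap at the step ``in the near-extremal regime $T$ must consist of singletons''. This is false for part~(ii). Take $k=t+3$ with $t$ large and $Z=[[t]]\cup\{B_1,B_2\}$ where $|B_1|=|B_2|=2$; then every $(t+1)$-cover of $\mathcal{A}(Z)$ is a $(t+1)$-subset of $Z$ and hence contains at least one of $B_1,B_2$, so \emph{no} minimum cover consists of singletons, yet a direct computation gives $|\mathcal{A}(Z)|>|\mathcal{H}(n,k,t)|$ for large $n$. Thus the reduction you place before the branching into ``common outside block'' versus ``one large part'' cannot be carried out, and without it the single fixed $T$ does not determine the shape of $\mathcal{F}$.

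A second issue is that the dichotomy itself is asserted rather than proved: you need to rule out intermediate configurations in which several of the $\mathcal{F}_i$ are moderately large with no single common outside block. In the paper this is exactly what the intermediate case $t+2\le|\cup\mathcal{T}|\le k-2$ handles (Lemma~\ref{lemmattrivial3baby}), and the argument there genuinely uses that \emph{every} $X\cup\{C\}$ with $C\notin\cup\mathcal{T}$ fails to be a $(t+1)$-cover, information your single-$T$ decomposition does not see. The fine bookkeeping you flag for part~(i) is indeed the hard part, but it only becomes tractable once one knows $|\cup\mathcal{T}|\in\{k-1,k\}$ and $X=\cap\mathcal{T}$ consists of singletons; that normalization comes for free from the paper's organization and is not available in yours.
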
 
Let us mention that, by Lemma \ref{lemmahh1v} (i), the families listed in Theorem \ref{thm3} (i) are exactly those with size at least $|\mathcal{H}(n,k,t)|-\spn{n-t-1}{k-t-1}+\spn{n-t-2}{k-t-1}$. On the other hand, for a $(t+2)$-partition $Z$, we have  $|\mathcal{A}(Z)|=\sum_{B\in Z}\spn{n-|\cup Z|+|B|}{k-t-1}-(t+1)\spn{n-|\cup Z|}{k-t-2}$, and it might be hard to list the partitions $Z$ achieving $|\mathcal{A}(Z)|>|\mathcal{H}(n,k,t)|$.

The rest of this paper is organized as follows. In Section \ref{section2}, we prove Theorem~\ref{thm1}, and in Section \ref{section3}, we prove Theorems \ref{thm2} and \ref{thm3}. In Section \ref{section4}, we prove several inequalities used in the paper.
	\section{Erd\H{o}s-Ko-Rado theorem}\label{section2}
In this section, we first prove some lemmas, in which a family is associated with its $t$-covers and quantitatively characterized. These enables us to prove Theorem \ref{thm1}. We then determine all maximal $t$-intersecting families in $\spn{[n]}{t+2}$ by proving Proposition \ref{propt+2}.

Let us list several relations concerning Stirling partition numbers. 
 The numbers satisfy the recurrence relation 
	\begin{equation}\label{equrecurrence}
		\spn{n}{k}=\spn{n-1}{k-1}+k\spn{n-1}{k}.
	\end{equation}
	In particular, this gives
	\begin{equation}\label{equnkn-1k}
		\spn{n}{k}>k\spn{n-1}{k}
	\end{equation}
for $n\geq k\geq2$. The following lemma is established in the proof of {\rm\cite[Lemma 19]{Kupavskii-2023}}.
\begin{lemma}[\cite{Kupavskii-2023}]\label{lemmainequntkt}
	For $n\geq k\geq 2$, we have 
	$$\spn{n}{k}\geq\left(2^{\frac{n-1}{k-1}}-1\right)\spn{n-1}{k-1}.$$
\end{lemma}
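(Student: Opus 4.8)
The plan is to clear the factor $\spn{n-1}{k-1}$, linearize the left-hand side with the recurrence \eqref{equrecurrence}, and reduce the whole lemma to a single multiplicative step that telescopes. Dividing the desired inequality by $\spn{n-1}{k-1}$ and using $\spn{n}{k}=\spn{n-1}{k-1}+k\spn{n-1}{k}$, one sees that the claim is equivalent to $\phi_{n-1}\geq 2^{(n-1)/(k-1)}$, where for $m\geq k-1$ I set
\[
\phi_m:=2+k\,\frac{\spn{m}{k}}{\spn{m}{k-1}}.
\]
Since $\spn{k-1}{k}=0$, the starting value is $\phi_{k-1}=2=2^{(k-1)/(k-1)}$, so by telescoping it suffices to prove the one-step inequality
\[
\phi_m\geq 2^{1/(k-1)}\,\phi_{m-1}\qquad(m\geq k).
\]

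First I would record the base-adjacent case $m=k$ as a reality check: here $\spn{k}{k}=1$ and $\spn{k}{k-1}=\binom{k}{2}$ give $\phi_k=\tfrac{2k}{k-1}$, so the step reads $\tfrac{k}{k-1}\geq 2^{1/(k-1)}$, i.e. the standard elementary inequality $2\leq(1+\tfrac{1}{k-1})^{k-1}$, with equality exactly at $k=2$. For the general step I would apply \eqref{equrecurrence} at both levels $k$ and $k-1$ to obtain
\[
\frac{\spn{m}{k}}{\spn{m}{k-1}}=\frac{1+k\rho_{m-1}}{(k-1)+\sigma_{m-1}},\qquad
\rho_{m-1}=\frac{\spn{m-1}{k}}{\spn{m-1}{k-1}},\quad
\sigma_{m-1}=\frac{\spn{m-1}{k-2}}{\spn{m-1}{k-1}},
\]
thereby turning the step into an inequality in the two consecutive Stirling ratios $\rho_{m-1}$ and $\sigma_{m-1}$.

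The main obstacle is the sharp control of $\sigma_{m-1}$, the ratio one level down: because it sits in the denominator above, a valid lower bound on $\phi_m$ forces an \emph{upper} bound on $\sigma_{m-1}$, and \eqref{equnkn-1k} (or a suitably strengthened companion estimate) is the natural source of such bounds. What makes this delicate rather than routine is that the target constant is genuinely tight: the lemma is an exact identity for $k=2$ (there $\spn{n}{2}=2^{n-1}-1$, $\sigma\equiv 0$, and $\phi_m=2^m$), so every estimate invoked must degenerate to equality at $k=2$ and can afford no slack at the bottom of the induction. I therefore expect the real work to lie in establishing a Stirling-ratio bound that dominates $2^{1/(k-1)}$ uniformly for all $m\geq k$ while collapsing to equality when $k=2$; once such a bound is in hand, the telescoping closes immediately.
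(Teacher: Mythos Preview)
The paper does not prove this lemma; it simply cites it as established within the proof of \cite[Lemma~19]{Kupavskii-2023}, so there is no in-paper argument against which to compare.

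Evaluated on its own merits, your reformulation is correct: via \eqref{equrecurrence} the claim is equivalent to $\phi_{n-1}\geq 2^{(n-1)/(k-1)}$ with $\phi_m=2+k\spn{m}{k}/\spn{m}{k-1}$, the base value $\phi_{k-1}=2$ is exact, and your sanity checks at $m=k$ and at $k=2$ are accurate. But the proposal is not a proof. You explicitly stop before the decisive multiplicative step $\phi_m\geq 2^{1/(k-1)}\phi_{m-1}$, correctly observe that it hinges on an \emph{upper} bound for the lower ratio $\sigma_{m-1}=\spn{m-1}{k-2}/\spn{m-1}{k-1}$, and then declare that bound to be ``the real work'' still to be done. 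That step is the entire content of the lemma; without it you have established only the trivial base case. Concretely, setting $y=\phi_{m-1}$ and $c=2^{1/(k-1)}$, the step unpacks to
\[
y\bigl(k-(k-1)c\bigr)+(k-2)\;\geq\;(cy-2)\,\sigma_{m-1},
\]
and nothing in your outline supplies the required control of $\sigma_{m-1}$. Note that the paper later quotes, as \eqref{equlemmaspn2lkt'}, a companion inequality from the same source, namely $\spn{m-1}{\ell}\geq\frac{1}{\ell}\bigl(2^{(m-1)/(\ell-1)}-2\bigr)\spn{m-1}{\ell-1}$, which at $\ell=k-1$ is precisely an upper bound on $\sigma_{m-1}$; a complete argument along your lines would have to establish both inequalities together (e.g.\ by a joint induction), or else obtain the $\sigma$-bound by some independent means. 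As written, the proposal identifies the obstacle but does not overcome it.
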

	Let $U$ be a set and let $\mathcal{F}$ be a family whose members are subsets of $U$. For $H\subseteq U$, set
	\begin{equation*}
		\mathcal{F}_H:=\{F\in\mathcal{F}:H\subseteq F\}.
	\end{equation*}
	\begin{lemma}\label{lemmainductive}
		Suppose $\mathcal{F}\subseteq\spn{[n]}{k}$ and $T$ is a $t$-cover of $\mathcal{F}$ with $\ell$ blocks. If $S$ is an $s$-partition with $|S\cap T|=r<t$, and $\mathcal{F}_S\neq\emptyset$, then there is a $(t+s-r)$-partition $H$ containing $S$ such that $|\mathcal{F}_S|\leq\binom{\ell-r}{t-r}|\mathcal{F}_H|$. In particular,  $|\mathcal{F}_S|\leq\binom{\ell-r}{t-r}\spn{n-s-t+r}{k-s-t+r}$.
	\end{lemma}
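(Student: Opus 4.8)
The plan is to leverage the $t$-cover condition to extract from each member $F\in\mathcal{F}_S$ a small, controlled packet of blocks of $T$ lying outside $S$, and then to run a pigeonhole argument over the bounded number of possible packets. The resulting ``majority'' packet, adjoined to $S$, will furnish the desired partition $H$.

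First I would make the extraction precise. Fix $F\in\mathcal{F}_S$. Then $S\subseteq F$, and since $T$ is a $t$-cover we have $|T\cap F|\geq t$. The $r$ blocks comprising $S\cap T$ lie in $F$ (because $S\subseteq F$) and in $T$, so $S\cap T\subseteq T\cap F$; hence $(T\cap F)\setminus S$ consists of at least $t-r$ blocks, every one of which belongs to $T\setminus S$. As $T$ has $\ell$ blocks and meets $S$ in exactly $r$ of them, $T\setminus S$ has $\ell-r$ blocks, so each $F\in\mathcal{F}_S$ contains a $(t-r)$-element subset of these $\ell-r$ blocks.

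Next I would apply pigeonhole. For each of the $\binom{\ell-r}{t-r}$ subsets $D\subseteq T\setminus S$ of size $t-r$, write $\mathcal{F}_{S\cup D}=\{F\in\mathcal{F}_S:D\subseteq F\}$. The previous step shows $\mathcal{F}_S=\bigcup_D\mathcal{F}_{S\cup D}$, so by the union bound some $D$ satisfies $|\mathcal{F}_{S\cup D}|\geq|\mathcal{F}_S|/\binom{\ell-r}{t-r}$. Put $H=S\cup D$. Because $\mathcal{F}_S\neq\emptyset$, this chosen class is nonempty, so choosing any $F$ in it gives $H\subseteq F$; as $F$ is a partition, the blocks of $H$ are pairwise disjoint, i.e. $H$ is genuinely a partition, and it has $s+(t-r)=t+s-r$ blocks since the $t-r$ blocks of $D$ lie outside the $s$ blocks of $S$. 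Observing that $\mathcal{F}_H=\mathcal{F}_{S\cup D}$ then yields the first inequality $|\mathcal{F}_S|\leq\binom{\ell-r}{t-r}|\mathcal{F}_H|$.

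Finally, for the ``in particular'' clause I would estimate $|\mathcal{F}_H|$ directly. Since $\mathcal{F}_H\subseteq\mathcal{S}(H)$, equation \eqref{equntkt} gives $|\mathcal{F}_H|\leq\spn{n-|\cup H|}{k-(t+s-r)}$; using $|\cup H|\geq t+s-r$ (all $t+s-r$ blocks being nonempty) together with the monotonicity of $\spn{m}{j}$ in $m$, which follows from \eqref{equrecurrence}, bounds this above by $\spn{n-s-t+r}{k-s-t+r}$, completing the chain. The one step that demands genuine care, rather than routine bookkeeping, is the claim that $H=S\cup D$ is actually a partition: a priori a block of $S$ could overlap a block of $D\subseteq T$, since $S$ and $T$ partition different sets. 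It is precisely the nonemptiness of $\mathcal{F}_{S\cup D}$---guaranteed by the pigeonhole step and the hypothesis $\mathcal{F}_S\neq\emptyset$---that forces $H$ to embed into an honest partition $F$ and hence to be one itself.
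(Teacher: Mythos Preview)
Your proof is correct and follows essentially the same approach as the paper's. The paper reaches the covering $\mathcal{F}_S=\bigcup_{H\in\mathcal{H}}\mathcal{F}_H$ via the inclusion--exclusion count $|F\cap(S\cup T)|\geq t+s-r$, whereas you compute $|(T\cap F)\setminus S|\geq t-r$ directly; these are equivalent observations, and the subsequent pigeonhole step and the bound via \eqref{equntkt} are identical.
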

	\begin{proof}
		For every $F\in\mathcal{F}_S$, we have
	\begin{align*}
		|F\cap(S\cup T)|&=|F|+|S\cup T|-|F\cup T|\\
		&\geq k+(\ell+s-r)-(k+\ell-t)=t+s-r.
	\end{align*}
	It follows that $\mathcal{F}=\cup_{P\in\mathcal{H}}\mathcal{F}_P,$
	where $\mathcal{H}$ is the set of all $(t+s-r)$-partitions containing $S$ with blocks lying in $S\cup T$. Note that $|\mathcal{H}|\leq\binom{\ell-r}{t-r}$. Since $\mathcal{F}_S$ is non-empty, so is $\mathcal{H}$. Now the desired partition can be chosen from those $H\in\mathcal{H}$ for which   $|\mathcal{F}_H|\geq|\mathcal{F}_P|$ whenever $P\in\mathcal{H}$. The latter estimate then follows from (\ref{equntkt}).
	\end{proof}
	\begin{lemma}\label{lemmakey}
	Suppose $k\geq t+2$, $n\geq L(k,t)$ and $\mathcal{F}\subseteq\spn{[n]}{k}$. If $\mathcal{G}$ is a collection of $t$-covers of $\mathcal{F}$, where each has at most $k$ blocks, then for every $t$-partition $H$, we have 
		$$|\mathcal{F}_H|\leq\max\left\{(k-t+1)^{\tau_t(\mathcal{G})-t}\spn{n-\tau_t(\mathcal{G})}{k-\tau_t(\mathcal{G})},\;(k-t+1)^{k-t}\right\}.$$
	\end{lemma}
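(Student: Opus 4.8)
The plan is to bound $|\mathcal{F}_H|$ by carefully choosing a good $t$-cover from $\mathcal{G}$ and then inductively peeling off blocks, using Lemma \ref{lemmainductive} as the engine. Set $\tau:=\tau_t(\mathcal{G})$ and fix a $t$-cover $T\in\mathcal{G}$ with $|T|=\tau$ blocks (with $t\leq\tau\leq k$). Given a $t$-partition $H$, I want to estimate how the members of $\mathcal{F}$ containing $H$ overlap $T$. The intuition is that each $F\in\mathcal{F}_H$ already meets $T$ in at least $t$ blocks, so $F$ is forced to contain a $t$-subpartition of $T$; this lets us split $\mathcal{F}_H$ according to which such $t$-subpartition of $T$ is used.

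First I would treat the base of the induction on $\tau-t$. When $\tau=t$, the $t$-cover $T$ has exactly $t$ blocks, so any $F\in\mathcal{F}$ must contain all of $T$; hence $\mathcal{F}=\mathcal{F}_T=\mathcal{S}(T)$ and by \eqref{equntkt} we get $|\mathcal{F}_H|\leq|\mathcal{F}|\leq\spn{n-t}{k-t}$, which matches the first term of the maximum with $\tau=t$. For the inductive step, suppose $\tau>t$. The key combinatorial move is: every $F\in\mathcal{F}_H$ satisfies $|F\cap T|\geq t$, so writing $\mathcal{F}_H=\bigcup_{R}\mathcal{F}_{H\cup R}$ where $R$ ranges over the $t$-subsets of blocks of $T$ that can simultaneously be completed by $H$, we get at most $\binom{\tau}{t}$ pieces. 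But a cleaner route, and the one I would push, is to pick a single block $b\in T$ and partition $\mathcal{F}_H$ into those members containing $b$ and those not. Since $T$ is a minimal $t$-cover, each member still meets $T\setminus\{b\}$ in at least $t-1$ blocks on average, and I can invoke Lemma \ref{lemmainductive} with $S=H$ (so $s=t$, and $r=|H\cap T|$) to bound $|\mathcal{F}_H|\leq\binom{\tau-r}{t-r}\spn{n-2t+r}{k-2t+r}$, then compare against the target.

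The cleanest overall strategy is probably a direct application of Lemma \ref{lemmainductive} followed by an inequality comparison rather than a fresh induction. Taking $S=H$ a $t$-partition, we have $r=|H\cap T|\leq t$, and Lemma \ref{lemmainductive} yields
$$|\mathcal{F}_H|\leq\binom{\tau-r}{t-r}\spn{n-2t+r}{k-2t+r}.$$
The worst case is $r=0$ (when $H$ is disjoint from $T$), giving $|\mathcal{F}_H|\leq\binom{\tau}{t}\spn{n-2t}{k-2t}$, but this is too lossy. Instead I would iterate: repeatedly enlarge $H$ by absorbing one block of $T$ at a time, so that after absorbing $\tau-t$ blocks we reduce to a $t$-cover with exactly $t$ blocks and apply the base case. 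Each absorption step multiplies the bound by at most $(k-t+1)$, because the block being absorbed splits into at most $(k-t+1)$ choices among the available blocks (there are at most $k$ blocks total and $t$ are already fixed). After $\tau-t$ steps we pick up a factor $(k-t+1)^{\tau-t}$ times $\spn{n-\tau}{k-\tau}$, giving the first term. The second term $(k-t+1)^{k-t}$ arises as the degenerate bound when $\spn{n-\tau}{k-\tau}$ collapses (i.e.\ $\tau$ is close to $k$), where one instead counts the bounded number of completions directly.

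The main obstacle I anticipate is controlling the branching factor precisely as $(k-t+1)$ rather than the crude $\binom{\tau}{t}$, and ensuring the recursion terminates at the right base value so that the two terms of the maximum emerge naturally. Concretely, the delicate point is the inequality bookkeeping showing that the per-step multiplicative cost $(k-t+1)\cdot\genfrac{\{}{\}}{0pt}{}{n-j-1}{k-j-1}\big/\genfrac{\{}{\}}{0pt}{}{n-j}{k-j}$ stays below $1$ once $n\geq L(k,t)$ — this is exactly where the hypothesis $n\geq L(k,t)$ and Lemma \ref{lemmainequntkt} enter, via the estimate $\spn{n}{k}\geq(2^{(n-1)/(k-1)}-1)\spn{n-1}{k-1}$ forcing the ratio of consecutive Stirling numbers to be large enough to dominate the factor $(k-t+1)$. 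I would isolate this ratio comparison as the technical heart, deferring its proof to the inequalities collected in Section \ref{section4}, and organize the argument so that the two expressions in the maximum correspond exactly to whether the Stirling factor $\spn{n-\tau}{k-\tau}$ remains meaningful ($\tau\leq k-1$) or degenerates ($\tau$ near $k$).
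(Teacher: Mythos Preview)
The central gap is your iteration scheme. You fix a single minimum-size $t$-cover $T\in\mathcal{G}$ and propose to ``repeatedly enlarge $H$ by absorbing one block of $T$ at a time.'' But Lemma~\ref{lemmainductive} cannot be iterated against a fixed $T$: one application with $S=H$ produces a partition $H_2\supseteq H$ whose $t-r$ new blocks all lie in $T$, so $|H_2\cap T|=t$ and the hypothesis $|S\cap T|<t$ fails on the next round. The single-shot bound $|\mathcal{F}_H|\leq\binom{\tau-r}{t-r}\spn{n-2t+r}{k-2t+r}$ you obtain is correct but does not give the lemma: by Lemma~\ref{lemmamono}\,(i) the function $(k-t+1)^{m-t}\spn{n-m}{k-m}$ is decreasing in $m$, so landing at $m=2t-r$ instead of $m=\tau$ yields a larger bound whenever $2t-r<\tau$.

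The missing idea, which the paper supplies, is to choose a \emph{fresh} member $G_i\in\mathcal{G}$ at each step. As long as $|H_i|<\tau_t(\mathcal{G})$, the partition $H_i$ cannot be a $t$-cover of $\mathcal{G}$, so some $G_i\in\mathcal{G}$ satisfies $|H_i\cap G_i|<t$; applying Lemma~\ref{lemmainductive} with this $G_i$ enlarges $H_i$ by $t-|H_i\cap G_i|$ blocks at cost $\binom{|G_i|-|H_i\cap G_i|}{t-|H_i\cap G_i|}\leq(k-t+1)^{|H_{i+1}|-|H_i|}$ (Lemma~\ref{lemmalrtr}). The process stops the first time $|H_j|\geq\tau_t(\mathcal{G})$, giving $|\mathcal{F}_H|\leq(k-t+1)^{|H_j|-t}|\mathcal{F}_{H_j}|$. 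If $|H_j|\leq k-1$ one bounds $|\mathcal{F}_{H_j}|\leq\spn{n-|H_j|}{k-|H_j|}$ and then uses the monotonicity of Lemma~\ref{lemmamono}\,(i) once to replace $|H_j|$ by $\tau_t(\mathcal{G})$; if $|H_j|=k$ then $|\mathcal{F}_{H_j}|\leq1$, producing the second term of the maximum. Your remark about the ratio $(k-t+1)\spn{n-j-1}{k-j-1}/\spn{n-j}{k-j}$ and the role of $n\geq L(k,t)$ is exactly Lemma~\ref{lemmamono}\,(i), but it is invoked once at the end rather than per step.
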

	\begin{proof}
	Note that every member of $\mathcal{F}$ also forms a $t$-cover of $\mathcal{G}$, and so $\tau_t(\mathcal{F})\leq k$. The lemma holds trivially for $\mathcal{F}_H=\emptyset$ or    $\tau_t(\mathcal{G})=t$. Now suppose $\mathcal{F}_H\neq\emptyset$ and $\tau_t(\mathcal{G})>t$. Then $H$ is not a $t$-cover of $\mathcal{G}$, which implies that $\dim(H\cap G_1)=r_1<t$ for some $G_1\in\mathcal{G}$. Set $H_1=H$. By Lemma \ref{lemmainductive}, we can inductively choose $G_1,G_2,\ldots,G_j\in\mathcal{G}$ and partitions $H_1\subsetneqq H_2\subsetneqq\cdots\subsetneqq H_j$ such that $|H_i\cap G_i| <t$, 
	\begin{equation}\label{equlemmakey1}
		|H_{i+1}|=|H_{i}|+t-|H_i\cap G_i|
	\end{equation}
	and
	\begin{equation}\label{equlemmakey2}
		|\mathcal{F}_{H_i}|\leq\binom{|G_i|-|H_i\cap G_i|}{t-|H_i\cap G_i|}|\mathcal{F}_{H_{i+1}}|
	\end{equation}   
	for $1\leq i\leq j-1$, and $|H_{j-1}|<\tau_t(\mathcal{G})\leq|H_j|$.
	Note that $|G_i|\leq k$ for $1\leq i\leq j$. Combining (\ref{equlemmakey1}), (\ref{equlemmakey2}) and Lemma \ref{lemmalrtr} yields 
	$$|\mathcal{F}_{H}|\leq\prod_{i=1}^{j-1}\binom{k-|H_i\cap G_i|}{t-|H_i\cap G_i|}|\mathcal{F}_{H_j}|\leq\mathcal(k-t+1)^{|H_j|-t}|\mathcal{F}_{H_j}|.$$
	If $|H_j|\geq k$, then $|H_j|=k$ and $H_j\in\mathcal{F}$ as $\mathcal{F}_H$ is non-empty, and thus $|\mathcal{F}_{H_j}|=1$, implying that $|\mathcal{F}_H|\leq(k-t+1)^{k-t}$. If $|H_j|\leq k-1$, then from (\ref{equntkt}) and Lemma \ref{lemmamono} (i), we conclude that $|\mathcal{F}_H|\leq (k-t+1)^{\tau_t(G)-t}\spn{n-\tau_t(G)}{k-\tau_t(G)}$. 
	\end{proof}
	For simplicity, we define
\begin{equation}\label{equfunf}
f(m,k,t,n)=\left\{\begin{array}{ll}(k-t+1)^{m-t}\binom{m}{t}\spn{n-m}{k-m},&{\rm if}\;t\leq m\leq k-1,\\(k-t+1)^{k-t}\binom{k}{t},&{\rm if}\;m=k.\\\end{array}\right.
\end{equation}

\noindent {\bf Proof of Theorem \ref{thm1}.}\;Since $\mathcal{F}$ is $t$-intersecting, $\mathcal{F}$ itself forms a set of $t$-covers. Let $T$ be a $t$-cover of $\mathcal{F}$ of size $\tau_t(\mathcal{F})$, then $\mathcal{F}=\cup_{H}\mathcal{F}_H$ with $H$ ranging over all $t$-subsets of $T$. From Lemma \ref{lemmakey} and  $t\leq\tau_t(\mathcal{F})\leq k$, we have
\begin{align}\label{equthm1}
	|\mathcal{F}|&\leq\max\{f(\tau_t(\mathcal{F}),k,t,n),\;f(k,k,t,n)\}.
\end{align}
It follows from Lemma \ref{lemmamono} (iii) that $|\mathcal{F}|\leq\spn{n-t}{k-t}$. Suppose equality holds, then by Lemma \ref{lemmamono} (iii) again, we obtain  $\tau_t(\mathcal{F})=t$, namely, every member of $\mathcal{F}$ shares some common $t$-partition, say $X$. From (\ref{equntkt}), we deduce that $\mathcal{F}=\left\{F\in\spn{[n]}{k}:X\subseteq F\right\}$ and $X$ consists of $t$ singletons. This completes the proof.\hfill{$\square$}\vspace{1em}

We conclude this section with the following proposition, which shows that the $t$-intersecting families in $\spn{[n]}{t+2}$ can be explicitly determined.
\begin{proposition}\label{propt+2}		Let $n>t+2$. If $\mathcal{F}\subseteq\spn{[n]}{t+2}$ is a maximal non-trivial $t$-intersecting family, then $\mathcal{F}=\mathcal{A}(Z)$ for some $(t+2)$-partition $Z$. \end{proposition}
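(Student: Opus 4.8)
The key to this proposition is the rigidity forced by $k=t+2$: any two \emph{distinct} members of $\mathcal{F}$ share \emph{exactly} $t$ blocks. Indeed, if two $(t+2)$-partitions of $[n]$ shared $t+1$ blocks, then the last block of each would be forced to equal the complement of the union of the shared blocks, so the two partitions would coincide; hence distinct members share at most $t$ blocks, and at least $t$ by hypothesis. Thus any two distinct members agree on a common set of $t$ blocks and differ only in their remaining two blocks, which are two distinct $2$-partitions of the same region of $[n]$. The plan is to fix one member $F_0=\{A_1,\dots,A_{t+2}\}$ and to encode every other member $F$ by the pair $\pi(F)\in\binom{[t+2]}{2}$ of indices of the two blocks of $F_0$ that $F$ does not contain (so $F$ keeps the other $t$ blocks of $F_0$ and replaces $A_i\cup A_j$, where $\{i,j\}=\pi(F)$, by a different $2$-partition).

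The first step is to show that $\{\pi(F):F\in\mathcal{F}\setminus\{F_0\}\}$ is an intersecting family of edges of $K_{t+2}$. A direct block count shows that if two members had disjoint pairs, their only common blocks would be the $A_\ell$ with $\ell\notin\pi(F)\cup\pi(F')$, numbering $t-2$, which violates the $t$-intersecting property; hence the pairs pairwise intersect. By the classical description of intersecting families of edges of a complete graph, $\{\pi(F)\}$ is then either a star (all pairs through one index $i_0$) or a triangle (the three edges on some $\{i,j,k\}$).

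The second step is to rule out the triangle. Suppose members $G_{ij},G_{jk},G_{ik}$ realize the three edges on $\{i,j,k\}$. Since $G_{ij}$ and $G_{jk}$ already share the $t-1$ blocks $A_\ell$ ($\ell\neq i,j,k$) and must share $t$ blocks in all, they are forced to share one further block, which (lying in $(A_i\cup A_j)\cap(A_j\cup A_k)=A_j$) is contained in $A_j$; likewise $G_{ij}$ and $G_{ik}$ must share a block contained in $A_i$. But the only non-$A_\ell$ blocks of $G_{ij}$ are the two parts of its $2$-partition of $A_i\cup A_j$, and these cannot include one part inside $A_i$ and one part inside $A_j$ without either reproducing the excluded partition $\{A_i,A_j\}$ or forcing a block inside $A_i\cap A_j=\emptyset$. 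This contradiction eliminates the triangle case, so the pairs form a star through some $i_0$.

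The final step, where I expect the main technical work, treats the star. Running the same ``overlap in one index'' count for two members with pairs $\{i_0,j\},\{i_0,j'\}$ shows they must share a single further block contained in $A_{i_0}$; since each such member has a \emph{unique} block inside $A_{i_0}$, these shared blocks all coincide, producing one common block $p\subsetneq A_{i_0}$ that lies in every member except $F_0$, and forcing at most one member per pair. Non-triviality is invoked precisely here to guarantee at least two distinct pairs: the degenerate star in which all members share a single pair $\{i_0,j_0\}$ leaves the $t$ blocks $\{A_\ell:\ell\neq i_0,j_0\}$ common to all of $\mathcal{F}$, making it trivial. Setting $Z:=\{A_\ell:\ell\neq i_0\}\cup\{p\}$, one checks that $Z$ is a genuine $(t+2)$-partition with $\cup Z\subsetneq[n]$ (as $A_{i_0}\setminus p\neq\emptyset$), that every member of $\mathcal{F}$ lies in $\mathcal{A}(Z)$ of Construction \ref{familya}, and conversely that $\mathcal{A}(Z)$ is itself $t$-intersecting; maximality of $\mathcal{F}$ then yields $\mathcal{F}=\mathcal{A}(Z)$. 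The chief obstacle throughout is the careful bookkeeping of which sub-blocks two members with overlapping pairs are compelled to share, since this forced sharing is exactly what both kills the triangle case and manufactures the block $p$ that defines $Z$.
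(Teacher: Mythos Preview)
Your proof is correct and complete, but it follows a genuinely different route from the paper's.  The paper fixes two members $F_1=G\cup\{C_1,C_2\}$ and $F_2=G\cup\{C_3,C_4\}$ sharing the $t$ blocks $G$, invokes non-triviality to produce an $F_3$ with $|F_3\cap G|<t$, and then argues directly with the blocks $C_1,\dots,C_4$: since $F_3$ must pick one block from each of $\{C_1,C_2\}$ and $\{C_3,C_4\}$, two of these are disjoint (say $C_1\cap C_4=\emptyset$), forcing $F_3=G_0\cup\{C_1,C_4\}\cup\{C_0\cup(C_2\cap C_3)\}$.  A short claim then shows $\mathcal{F}_G=\{F_1,F_2\}$, so every other member must look like $F_3$, yielding $\mathcal{F}\subseteq\mathcal{A}(G\cup\{C_1,C_4\})$ and hence equality by maximality.

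Your argument instead encodes each member by its ``missing pair'' $\pi(F)\in\binom{[t+2]}{2}$ and reduces to the classical dichotomy for intersecting families of edges (star versus triangle), eliminates the triangle by the forced sub-block contradiction, and in the star case extracts the common block $p\subsetneq A_{i_0}$ via transitivity.  The trade-off: the paper's proof is entirely self-contained block bookkeeping and never names the star/triangle dichotomy, while yours is more structural and conceptually transparent (the whole proposition becomes an instance of the $2$-uniform Erd\H{o}s--Ko--Rado picture) at the cost of importing that small external fact.  One expository point worth tightening in your triangle step: when you say the extra shared block of $G_{ij}$ and $G_{jk}$ lies in $(A_i\cup A_j)\cap(A_j\cup A_k)=A_j$, you are silently using that the extra block cannot be $A_k$ (from $G_{ij}$) matching a new block of $G_{jk}$, nor $A_i$ matching a new block of $G_{ij}$; this is immediate since either would force $\{A_j,A_k\}$ or $\{A_i,A_j\}$ to be the forbidden $2$-partition, but it deserves a clause.
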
 
\begin{proof}
	Note that two $(t+2)$-partitions of $[n]$ meeting in at least $t+1$ blocks are the same. Then any two of members of $\mathcal{F}$ has exactly $t$ blocks in common. Fix two of them, say 
	$$F_1=G\cup\{C_1,C_2\},\;F_2=G\cup\{C_3,C_4\},$$
	where $G=F_1\cap F_2$, and $\{C_1,C_2\}$ and $\{C_3,C_4\}$ are necessarily distinct partitions of $\overline{\cup G}$. Note that $\mathcal{F}$ is non-trivial, then there exists $F_3\in\mathcal{F}$ with $|F_3\cap G|<t$. Since $|F_3\cap F_1|\geq t$, we have  $F_3\cap\{C_1,C_2\}\neq\emptyset$. Similarly, $F_3\cap\{C_3,C_4\}\neq\emptyset$. Any two blocks of $F_3$ are disjoint, from which we deduce that $C_i\cap C_j=\emptyset$ for some $i\in\{1,2\}$ and $j\in\{3,4\}$. Without loss of generality, suppose $C_1\cap C_4=\emptyset$. Then $C_1\subseteq C_3$ and $C_4\subseteq C_2$, and hence $C_i\cap C_j\neq\emptyset$ whenever $(i,j)\in\{(1,3),(2,3),(2,4)\}$. Therefore, $F_3\cap\{C_1,C_2\}=\{C_1\}$ and $F_3\cap\{C_3,C_4\}=\{C_4\}$, and so $|F_3\cap G|=t-1$. Set $G_0:=F_3\cap G$ and suppose $G\setminus G_0=\{C_0\}$, then 
	$$F_3=G_0\cup\{C_1,C_4\}\cup\{C_0\cup(C_2\cap C_3)\}.$$
	
	Now we claim $\mathcal{F}_{G}=\{F_1,F_2\}$. In fact, if $F\in\mathcal{F}$ and $G\subseteq F$, then $C_0\cup(C_2\cap C_3)$ cannot lie in $F$ as $C_0\in F$. Since $|F\cap F_3|\geq t$, one of $C_1$ and $C_4$ belongs to $F$ and so $F$ is either $F_1$ or $F_2$. This justifies our claim. Therefore, $|F\cap G|<t$ for every $F\in\mathcal{F}\setminus\{F_1,F_2\}$. By the same argument as we determined $F_3$, we obtain $\mathcal{F}\subseteq\mathcal{A}(Z)$, where $Z=G\cup\{C_1,C_4\}$. It follows from the maximality of $\mathcal{F}$ that $\mathcal{F}=\mathcal{A}(Z)$. 
\end{proof}
\section{Non-trivial $t$-intersecting families}\label{section3}
In this section, we characterize non-trivial $t$-intersecting families in $\spn{n}{k}$ using $t$-covers, where $k\geq t+3$ and $n\geq2L(k,t)$, and on this basis we prove Theorems \ref{thm2} and \ref{thm3}. Let us recall a useful result concerning inclusion-exclusion.
\begin{lemma}\label{lemmain-ex}
	Let $A_1,A_2,\ldots,A_m$ be finite sets. Then $$|\cup_{i\in[m]}A_i|\leq\textstyle\sum_{j\in[s]}(-1)^{j-1}\textstyle\sum_{J\in\binom{[m]}{j}}|\cap_{\ell\in J}A_\ell|$$
	for odd $s$, and 
	$$|\cup_{i\in[m]}A_i|\geq\textstyle\sum_{j\in[s]}(-1)^{j-1}\textstyle\sum_{J\in\binom{[m]}{j}}|\cap_{\ell\in J}A_\ell|$$
	for even $s$.
\end{lemma}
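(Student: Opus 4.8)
The plan is to prove both inequalities at once by the standard double-counting argument used for the Bonferroni inequalities, tracking the contribution of each element of the union to the two sides. First I would fix an element $x\in\cup_{i\in[m]}A_i$ and set $d=|\{i\in[m]:x\in A_i\}|\geq1$. On the left-hand side $x$ contributes exactly $1$ to $|\cup_{i\in[m]}A_i|$. On the right-hand side, $x$ lies in $\cap_{\ell\in J}A_\ell$ precisely when $J\subseteq\{i:x\in A_i\}$, so for each $j$ it is counted by exactly $\binom{d}{j}$ of the index sets $J\in\binom{[m]}{j}$. Hence the total contribution of $x$ to the truncated sum $\sum_{j\in[s]}(-1)^{j-1}\sum_{J\in\binom{[m]}{j}}|\cap_{\ell\in J}A_\ell|$ equals $\sigma_s(d):=\sum_{j=1}^{s}(-1)^{j-1}\binom{d}{j}$, with the convention $\binom{d}{j}=0$ for $j>d$.

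It then suffices to show that $\sigma_s(d)\geq1$ when $s$ is odd and $\sigma_s(d)\leq1$ when $s$ is even, for every $d\geq1$; summing these pointwise inequalities over all $x$ in the union yields the two claimed bounds, since each such $x$ contributes $1$ to the left side and $\sigma_s(d)$ to the right side. To establish the scalar inequalities I would use the closed form
$$\sum_{j=0}^{s}(-1)^{j}\binom{d}{j}=(-1)^{s}\binom{d-1}{s},$$
which follows by a short induction on $s$ via Pascal's rule $\binom{d}{j}=\binom{d-1}{j}+\binom{d-1}{j-1}$ (the successive partial sums telescope). Rearranging gives $\sigma_s(d)=1-(-1)^{s}\binom{d-1}{s}$, so for odd $s$ this is $1+\binom{d-1}{s}\geq1$ and for even $s$ it is $1-\binom{d-1}{s}\leq1$, exactly as needed.

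The argument is entirely elementary, and I do not expect a genuine obstacle. The only point demanding a little care is the bookkeeping with the convention $\binom{d}{j}=0$ for $j>d$, which ensures the identity above remains valid when $s$ exceeds $d$ (there $\binom{d-1}{s}=0$, so $\sigma_s(d)=1$, in agreement with full inclusion-exclusion). If one prefers to avoid invoking the closed form, the same conclusion can be reached by a direct induction on $s$ showing that the partial sums $\sigma_s(d)$ alternately over- and under-shoot the value $1$; either route is routine.
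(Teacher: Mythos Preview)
Your argument is correct: this is the standard derivation of the Bonferroni inequalities by tracking the contribution of each element, and the identity $\sum_{j=0}^{s}(-1)^{j}\binom{d}{j}=(-1)^{s}\binom{d-1}{s}$ is exactly what is needed to control the sign of $\sigma_s(d)-1$. The paper itself does not give a proof of this lemma; it simply states it as a known result concerning inclusion--exclusion, so there is no approach to compare against --- your write-up supplies a clean self-contained justification that the paper omits.
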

\begin{lemma}\label{lemmaspn2lkt}
	If $k\geq t+3$ and $n\geq2L(k,t)$, then for each integer $s$ with  $0\leq s\leq k-t-2$, we have 
	\begin{equation*}
		\spn{n-t-s}{k-t-s}>(t+1)^2(k-t+1)^2\spn{n-t-s-1}{k-t-s-1}.
	\end{equation*}
\end{lemma}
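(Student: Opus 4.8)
The plan is to apply the ratio estimate of Lemma~\ref{lemmainequntkt} to the pair $(m,j):=(n-t-s,\,k-t-s)$ and to reduce the whole claim to a single logarithmic inequality. For every admissible $s$ we have $m\geq j\geq 2$ (indeed $m-j=n-k>0$, and $j\geq 2$ exactly because $s\leq k-t-2$), so Lemma~\ref{lemmainequntkt} gives $\spn{m}{j}\geq\bigl(2^{(m-1)/(j-1)}-1\bigr)\spn{m-1}{j-1}$. Since $\spn{m-1}{j-1}=\spn{n-t-s-1}{k-t-s-1}$, it suffices to show that the multiplier beats the target constant, i.e. $2^{(m-1)/(j-1)}-1>(t+1)^2(k-t+1)^2$, equivalently $\frac{m-1}{j-1}>\log_2\!\bigl((t+1)^2(k-t+1)^2+1\bigr)$.

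Next I would reduce to the extreme value $s=0$. The right-hand side above is independent of $s$, while the exponent $\frac{m-1}{j-1}=\frac{(n-t-1)-s}{(k-t-1)-s}$ is nondecreasing in $s$ on $[0,k-t-2]$: since $n\geq 2L(k,t)>k$ we have $n-t-1>k-t-1>0$, and subtracting the same nonnegative quantity $s$ from numerator and denominator of a fraction exceeding $1$ only increases it (the cross-multiplication reduces to $s\,(A-B)\geq 0$ with $A=n-t-1$, $B=k-t-1$). Hence $\frac{m-1}{j-1}\geq\frac{n-t-1}{k-t-1}$, and it is enough to settle the inequality at $s=0$.

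Finally, substituting $n\geq 2L(k,t)=2(t+1)+2(k-t+1)\log_2 c$ with $c:=(t+1)(k-t+1)$ and clearing the denominator $k-t-1$, the $s=0$ bound becomes an inequality of the form $(t+1)+2(k-t+1)\log_2 c>(k-t-1)\log_2(c^2+1)$. The single delicate point—and the place where the argument would go wrong if handled carelessly—is that one must \emph{not} replace $\log_2(c^2+1)$ by $2\log_2 c+1$ (equivalently use $c^2+1\leq 2c^2$): that crude step throws away an additive constant and, for $t$ fixed and $k-t$ large, turns the statement false (for $t=1$ it would demand $8+4\log_2 k>k$). Instead I would keep the correction exactly, writing $\log_2(c^2+1)=2\log_2 c+\log_2(1+c^{-2})$; the $2\log_2 c$ terms then combine, leaving the equivalent inequality $(t+1)+4\log_2 c>(k-t-1)\log_2(1+c^{-2})$, where the surplus $4\log_2 c$ comes precisely from the gap between the factors $k-t+1$ and $k-t-1$. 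Since $c=(t+1)(k-t+1)\geq 8$, the bound $\log_2(1+c^{-2})\leq c^{-2}/\ln 2$ makes the right-hand side at most $(k-t-1)/(c^2\ln 2)<1$, while the left-hand side is at least $(t+1)+4\log_2 8\geq 14$, closing the argument with room to spare. The only real obstacle is thus the sharp bookkeeping of this $O(1)$ correction term, which the exponential growth furnished by Lemma~\ref{lemmainequntkt} absorbs comfortably.
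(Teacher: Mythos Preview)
Your proof is correct and follows essentially the same route as the paper: apply Lemma~\ref{lemmainequntkt}, reduce to $s=0$ by monotonicity of $\frac{n-t-s-1}{k-t-s-1}$, and then verify the exponent bound using $n\ge 2L(k,t)$. The only cosmetic difference is in the final arithmetic step: the paper absorbs the ``$-1$'' by first isolating the factor $2^{(t+1)/(k-t-1)}$ and bounding it below via $e^x>1+x$, whereas you equivalently expand $\log_2(c^2+1)=2\log_2 c+\log_2(1+c^{-2})$ and bound the correction by $c^{-2}/\ln 2$.
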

\begin{proof}
	Since  $\frac{2L(k,t)-t-1}{k-t-1}>\frac{t+1}{k-t-1}+2\log_2(t+1)(k-t+1)$,
	\begin{equation}\label{equlemmaspn2lkt}
	2^{\frac{n-(t+s)-1}{k-(t+s)-1}}\geq2^{\frac{n-t-1}{k-t-1}}>2^{\frac{t+1}{k-t-1}}(t+1)^2(k-t+1)^2.
\end{equation}
Recall that $e^x\geq1+x$ for each real $x$. Then we get $2^{\frac{t+1}{k-t-1}}>1+\frac{t+1}{k-t-1}\cdot\ln 2$ by substituting $x$ to  $\frac{t+1}{k-t-1}\cdot\ln 2$. This together with (\ref{equlemmaspn2lkt}) shows  $2^{\frac{n-(t+s)-1}{k-(t+s)-1}}-1>(t+1)^2(k-t+1)^2$. Thus the desired inequality follows from Lemma \ref{lemmainequntkt}.
\end{proof}
We have introduced in Constructions  \ref{familya} and \ref{familyh} two structures of maximal non-trivial $t$-intersecting families. A simple counting argument using (\ref{equntkt}) gives
\begin{align}
|\mathcal{A}(n,k,t)|&=(t+2)\spn{n-t-1}{k-t-1}-(t+1)\spn{n-t-2}{k-t-2}.\label{equsizea}
\end{align}
For $\mathcal{H}(n,k,t)$, we get from inclusion-exclusion that 
\begin{equation}\label{equsizeh}
|\mathcal{H}(n,k,t)|=\sum_{j=1}^{k-t}(-1)^{j-1}\binom{k-t}{j}\spn{n-t-j}{k-t-j}+t.
\end{equation}
Let us introduce a function which is used frequently:
\begin{equation}\label{equfunr}
	r(n,k,t):=|\mathcal{H}(n,k,t)|-\spn{n-t-1}{k-t-1}+\spn{n-t-2}{k-t-1}.
\end{equation}
Note that (\ref{equnkn-1k}) gives  $r(n,k,t)<|\mathcal{H}(n,k,t)|$.
\begin{lemma}\label{equsizerlb}
Let $k\geq t+3$ and $n\geq2L(k,t)$. Then the following hold.
\begin{itemize}
\item[\rm(i)]$r(n,k,t)>\left(k-t-1-\frac{1}{(t+1)^2}\right)\spn{n-t-1}{k-t-1}+t$.
\item[\rm(ii)]$r(n,t+3,t)>\frac{9}{4}\spn{n-t-1}{2}+t.$
\end{itemize}
\end{lemma}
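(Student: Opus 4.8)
The plan is to expand $r(n,k,t)$ through the inclusion--exclusion formula (\ref{equsizeh}) and to control the resulting Stirling numbers with the rapid-decay estimate of Lemma \ref{lemmaspn2lkt}. For part (i), I would first observe that $|\mathcal{H}(n,k,t)|-t$ is the size of the union $\bigcup_{i=1}^{k-t}A_i$, where $A_i:=\mathcal{S}([[t]]\cup\{\{t+i\}\})$, so that every $j$-fold intersection of the $A_i$ has size $\spn{n-t-j}{k-t-j}$ by (\ref{equntkt}). The even-$s$ case of the Bonferroni inequality (Lemma \ref{lemmain-ex}) with $s=2$ then gives
\begin{equation*}
|\mathcal{H}(n,k,t)|\geq(k-t)\spn{n-t-1}{k-t-1}-\binom{k-t}{2}\spn{n-t-2}{k-t-2}+t.
\end{equation*}
Substituting this into the definition (\ref{equfunr}) of $r(n,k,t)$ and discarding the non-negative summand $\spn{n-t-2}{k-t-1}$ reduces part (i) to the single estimate $\binom{k-t}{2}\spn{n-t-2}{k-t-2}<\frac{1}{(t+1)^2}\spn{n-t-1}{k-t-1}$. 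To establish it, I would apply Lemma \ref{lemmaspn2lkt} with $s=1$ (legitimate since $k\geq t+3$), which gives $\spn{n-t-2}{k-t-2}<\frac{1}{(t+1)^2(k-t+1)^2}\spn{n-t-1}{k-t-1}$, and combine it with the elementary bound $\binom{k-t}{2}=\frac{(k-t)(k-t-1)}{2}<\frac{1}{2}(k-t+1)^2$. This yields the even stronger inequality $\binom{k-t}{2}\spn{n-t-2}{k-t-2}<\frac{1}{2(t+1)^2}\spn{n-t-1}{k-t-1}$, completing part (i).

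For part (ii) I would specialize to $k=t+3$ and use the closed forms $\spn{N}{2}=2^{N-1}-1$ (read off from (\ref{equspn})), together with $\spn{N}{1}=1$ and $\spn{N}{0}=0$. Then (\ref{equsizeh}) collapses to $|\mathcal{H}(n,t+3,t)|=3\spn{n-t-1}{2}-3+t$, so by (\ref{equfunr}),
\begin{equation*}
r(n,t+3,t)=2\spn{n-t-1}{2}+\spn{n-t-2}{2}-3+t.
\end{equation*}
Replacing each Stirling number by its power-of-two value and simplifying, the target inequality $r(n,t+3,t)>\frac{9}{4}\spn{n-t-1}{2}+t$ reduces to $2^{n-t-4}>\frac{15}{4}$, which holds because $n\geq2L(t+3,t)\geq t+6$.

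The computations are all routine, so the only genuine issue is one of calibration. The constant $1/(t+1)^2$ demanded in part (i) is met precisely because Lemma \ref{lemmaspn2lkt} supplies the factor $(t+1)^2(k-t+1)^2$, whose $(k-t+1)^2$ absorbs $\binom{k-t}{2}$ with a factor of $2$ to spare; any weaker decay lemma would not match the required power of $(t+1)$. Part (ii) must be argued separately because the generic estimate of part (i) only produces the coefficient $2-\frac{1}{(t+1)^2}<\frac{9}{4}$ when $k=t+3$, so reaching the sharper value $\frac{9}{4}$ forces the explicit evaluation of $\spn{N}{2}$ rather than the generic decay bound.
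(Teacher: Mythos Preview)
Your argument is correct and matches the paper's proof essentially line for line: part (i) uses the same Bonferroni bound (\ref{equsizehlb}) together with Lemma \ref{lemmaspn2lkt} at $s=1$, and part (ii) is the same explicit computation (the paper writes it as $r(n,t+3,t)=\frac{5}{2}\spn{n-t-1}{2}+t-\frac{7}{2}$ via the recurrence, which is identical to your $2\spn{n-t-1}{2}+\spn{n-t-2}{2}+t-3$). The only cosmetic difference is that the paper reduces (ii) to $\spn{n-t-1}{2}>14$ while you reduce it to the equivalent $2^{n-t-4}>\tfrac{15}{4}$.
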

\begin{proof}
{\rm(i)}\;From (\ref{equsizeh}) and Lemma \ref{lemmain-ex}, we obtain
\begin{align}
	|\mathcal{H}(n,k,t)|&\geq(k-t)\spn{n-t-1}{k-t-1}-\binom{k-t}{2}\spn{n-t-2}{k-t-2}+t.\label{equsizehlb}
\end{align}
Then (i) follows directly from Lemma \ref{lemmaspn2lkt} and (\ref{equfunr}).

{\rm(ii)}\;Using (\ref{equrecurrence}), we derive that  $r(n,t+3,t)=\frac{5}{2}\spn{n-t-1}{2}+t-\frac{7}{2}$. Note that $\spn{n-t-1}{2}\geq2^{2L(k,t)-t-2}-1>14$, then (ii) is true.
\end{proof}
\begin{lemma}\label{lemmat+2small}
	Let $k\geq t+3$ and $n\geq2L(k,t)$. If $\mathcal{F}\subseteq\spn{[n]}{k}$ is a $t$-intersecting family with $\tau_t(\mathcal{F})\geq t+2$, then $|\mathcal{F}|<r(n,k,t)$.
\end{lemma}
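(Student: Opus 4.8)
The plan is to bound $|\mathcal{F}|$ by exactly the mechanism used in the proof of Theorem~\ref{thm1}, but now to exploit the hypothesis $\tau_t(\mathcal{F})\ge t+2$ to force the estimate well below $r(n,k,t)$, where the lower bounds for $r(n,k,t)$ are supplied by Lemma~\ref{equsizerlb}. Writing $m:=\tau_t(\mathcal{F})$ and taking $\mathcal{G}=\mathcal{F}$ (each member of $\mathcal{F}$ is itself a $t$-cover with at most $k$ blocks), I decompose $\mathcal{F}=\cup_{H}\mathcal{F}_H$ over the $\binom{m}{t}$ $t$-subsets $H$ of a minimum $t$-cover $T$, apply Lemma~\ref{lemmakey} to each $\mathcal{F}_H$, and sum. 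This reproduces the bound in (\ref{equthm1}), namely $|\mathcal{F}|\le\max\{f(m,k,t,n),\,f(k,k,t,n)\}$ with $f$ as in (\ref{equfunf}). So it suffices to bound each of $f(m,k,t,n)$ (for $t+2\le m\le k-1$) and the boundary term $f(k,k,t,n)$ by $r(n,k,t)$.

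For the interior term I would first show $f(\cdot,k,t,n)$ is strictly decreasing on $t+2\le m\le k-1$, so that $f(m,k,t,n)\le f(t+2,k,t,n)$. Indeed, by Lemma~\ref{lemmaspn2lkt} (with $s=m-t$, valid for $m\le k-2$),
\[
\frac{f(m+1,k,t,n)}{f(m,k,t,n)}=(k-t+1)\cdot\frac{m+1}{m+1-t}\cdot\frac{\spn{n-m-1}{k-m-1}}{\spn{n-m}{k-m}}<\frac{m+1}{(m+1-t)(t+1)^2(k-t+1)}<1 .
\]
Applying Lemma~\ref{lemmaspn2lkt} once more (with $s=1$) gives
\[
f(t+2,k,t,n)=(k-t+1)^2\binom{t+2}{2}\spn{n-t-2}{k-t-2}<\frac{t+2}{2(t+1)}\spn{n-t-1}{k-t-1}<\spn{n-t-1}{k-t-1}.
\]
Since $k-t-1\ge2$, Lemma~\ref{equsizerlb}(i) yields $r(n,k,t)>\bigl(k-t-1-\tfrac1{(t+1)^2}\bigr)\spn{n-t-1}{k-t-1}+t>\spn{n-t-1}{k-t-1}$, so the interior term lies strictly below $r(n,k,t)$.

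The main obstacle is the boundary term $f(k,k,t,n)=(k-t+1)^{k-t}\binom{k}{t}$, which arises from the case $H_j\in\mathcal{F}$ in Lemma~\ref{lemmakey} and is \emph{independent of $n$}. I again aim at $f(k,k,t,n)<\spn{n-t-1}{k-t-1}<r(n,k,t)$. Here the stronger threshold $n\ge2L(k,t)$, rather than $n\ge L(k,t)$ as in Theorem~\ref{thm1}, is essential: doubling $n$ inflates $\spn{n-t-1}{k-t-1}$ by an exponential factor while leaving $f(k,k,t,n)$ fixed. Concretely, iterating Lemma~\ref{lemmaspn2lkt} from bottom index $k-t-1$ down to $2$ gives
\[
\spn{n-t-1}{k-t-1}>\bigl((t+1)(k-t+1)\bigr)^{2(k-t-3)}\bigl(2^{\,n-k+1}-1\bigr),
\]
and since $n\ge2L(k,t)$ forces $2^{\,n-k+1}$ to dominate all $n$-independent factors, a routine estimate confirms the right-hand side exceeds $(k-t+1)^{k-t}\binom{k}{t}$ for $k\ge t+4$. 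The degenerate case $k=t+3$ (empty chain) is cleanest through Lemma~\ref{equsizerlb}(ii): there $f(k,k,t,n)=64\binom{t+3}{3}$ is polynomial in $t$, whereas $r(n,t+3,t)>\tfrac94\bigl(2^{\,n-t-2}-1\bigr)+t$ is exponential in $n\ge2L(t+3,t)$, so the inequality is immediate. Combining the interior and boundary estimates gives $|\mathcal{F}|\le\max\{f(t+2,k,t,n),\,f(k,k,t,n)\}<r(n,k,t)$, as required. The only delicate point is controlling the constants in the boundary estimate for the smallest values of $k-t$; everything else is monotonicity bookkeeping driven by Lemma~\ref{lemmaspn2lkt}.
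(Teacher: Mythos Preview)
Your proposal is correct and follows essentially the same route as the paper: bound $|\mathcal{F}|$ via (\ref{equthm1}) by $\max\{f(t+2,k,t,n),f(k,k,t,n)\}$, show $f(t+2,k,t,n)<\spn{n-t-1}{k-t-1}<r(n,k,t)$ using Lemma~\ref{lemmaspn2lkt} and Lemma~\ref{equsizerlb}(i), and treat $k=t+3$ separately via Lemma~\ref{equsizerlb}(ii). The one local difference is in handling the boundary term $f(k,k,t,n)$ for $k\ge t+4$: the paper packages this into Lemma~\ref{lemmalemmat+2small}, which relies on the Rennie--Dobson lower bound (Lemma~\ref{lemmalb}) and the auxiliary function $Q(s,t)$ of Lemma~\ref{lemmaqkt} to show $f(k,k,t,n)<f(t+2,k,t,n)$, whereas you iterate Lemma~\ref{lemmaspn2lkt} down to $\spn{n-k+2}{2}=2^{n-k+1}-1$ and compare directly with $f(k,k,t,n)<r(n,k,t)$. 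Your variant is arguably cleaner since it reuses a lemma already in play, though you should spell out the ``routine estimate'' (e.g.\ via $\binom{k}{t}\le(t+1)^{k-t}$ from Lemma~\ref{lemmalrtr} and the explicit value of $2L(k,t)-k+1$) rather than leave it asserted.
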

\begin{proof}
By (\ref{equthm1}) and Lemma \ref{lemmamono} (i), we have
\begin{equation}\label{equlemmat+2small1}
	|\mathcal{F}|\leq\max\{f(\tau_t(\mathcal{F}),k,t,n),\;f(k,k,t,n)\}\leq\max\{f(t+2,k,t,n),f(k,k,t,n)\}.
\end{equation}
Suppose $k\geq t+4$, then we obtain $|\mathcal{F}|< r(n,k,t)$ by applying Lemma \ref{lemmalemmat+2small}.

It remains to consider the case of $k=t+3$, in which (\ref{equlemmat+2small1}) and (\ref{equfunf}) give
\begin{align*}
|\mathcal{F}|\leq\max\{f(t+2,t+3,t,n),f(t+3,t+3,t,n)\}=f(t+3,t+3,t,n)=64\binom{t+3}{3}.
\end{align*}
Note that $n\geq L(t+3,3)=t+9+4\log_2(t+1)$. Then Lemma \ref{equsizerlb} (ii) yields
\begin{align*}
	r(n,t+3,t)&>\spn{n-t-1}{2}+1=2^{n-t-2}>2^7(t+1)^{4}>64\binom{t+3}{3}.
\end{align*}
Thus $|\mathcal{F}|<r(n,k,t)$ for all $k\geq t+3$.
\end{proof}
This lemma suggests that, to find non-trivial $t$-intersecting families with maximum size for large $n$, we can just consider those with $t$-covering number $t+1$. Let $\mathcal{F}\subseteq\spn{[n]}{k}$ be a $t$-intersecting family with  $\tau_t(\mathcal{F})=t+1$, we use $\mathcal{T}=\mathcal{T}(\mathcal{F})$ to denote the family of $t$-covers of $\mathcal{F}$ with $t+1$ blocks, that is,
\begin{equation}\label{equfamcoverset}
	\mathcal{T}:=\{T:\;T\;\mbox{is a $(t+1)$-partition},\;|T
	\cap F|\geq t\;\mbox{for all}\;F\in\mathcal{F}\}.
\end{equation}
We may regard $\mathcal{T}$ as a family of $(t+1)$-subsets of $2^{[n]}\setminus\{\emptyset\}$, with an additional condition that each member consists of pairwise disjoint sets. 
\begin{lemma}\label{lemmanotintsmall}
Let $k\geq t+3$ and $n\geq2L(k,t)$, and let $\mathcal{F}\subseteq\spn{[n]}{k}$ be a maximal $t$-intersecting family with $\tau_t(\mathcal{F})=t+1$. If there are $T_1,T_2\in\mathcal{T}$ with $|T_1\cap T_2|<t$, then $|\mathcal{F}|<r(n,k,t)$.
\end{lemma}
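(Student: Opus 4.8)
The plan is to bound $|\mathcal{F}|$ by decomposing it according to which blocks of $T_1$ and $T_2$ each member omits, and then to extract a cross-intersecting phenomenon from the hypothesis $|T_1\cap T_2|<t$. Write $C=T_1\cap T_2$, so $|C|\le t-1$, and for $a\in T_1\cup\{\ast\}$ and $b\in T_2\cup\{\ast\}$ set
\[
\mathcal{F}[a,b]=\left\{F\in\mathcal{F}:(T_1\setminus\{a\})\cup(T_2\setminus\{b\})\subseteq F,\ a\notin F,\ b\notin F\right\},
\]
with the convention $T_i\setminus\{\ast\}=T_i$. Since $|F\cap T_i|\ge t$ and $|T_i|=t+1$, each $F$ omits at most one block of each $T_i$, so $\mathcal{F}=\bigsqcup_{a,b}\mathcal{F}[a,b]$ is a disjoint decomposition into at most $(t+2)^2$ pieces. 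Putting $R_{a,b}=(T_1\setminus\{a\})\cup(T_2\setminus\{b\})$, each member of $\mathcal{F}[a,b]$ contains the $|R_{a,b}|$ pairwise disjoint blocks of $R_{a,b}$, so (\ref{equntkt}) and Lemma \ref{lemmamono}(i) give $|\mathcal{F}[a,b]|\le\spn{n-|R_{a,b}|}{k-|R_{a,b}|}$. A short computation using $|(T_1\setminus\{a\})\cap(T_2\setminus\{b\})|\le|C|$ shows $|R_{a,b}|\ge t+1$ always, and that $|R_{a,b}|=t+1$ forces $|C|=t-1$ together with $a\in T_1\setminus T_2$ and $b\in T_2\setminus T_1$. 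Hence if $|C|\le t-2$ every nonempty piece has $|R_{a,b}|\ge t+2$, yielding the crude bound $|\mathcal{F}|\le (t+2)^2\spn{n-t-2}{k-t-2}$, which is far below $r(n,k,t)$ by Lemmas \ref{lemmaspn2lkt} and \ref{equsizerlb}. So the remaining case is $|C|=t-1$, where $T_1\setminus T_2=\{a_1,a_2\}$ and $T_2\setminus T_1=\{b_1,b_2\}$, and exactly the four ``corners'' $\mathcal{F}[a_i,b_j]$ have $|R_{a_i,b_j}|=t+1$.

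The heart of the argument is to show that these four disjoint corners together contribute at most $2\spn{n-t-1}{k-t-1}$. Relabel $\mathcal{D}_{ij}:=\mathcal{F}[a_{3-i},b_{3-j}]$, so that every member of $\mathcal{D}_{ij}$ contains $C\cup\{a_i,b_j\}$ and omits $a_{3-i},b_{3-j}$. The four corners split into two ``diagonal'' pairs $\{\mathcal{D}_{11},\mathcal{D}_{22}\}$ and $\{\mathcal{D}_{12},\mathcal{D}_{21}\}$, distinguished by the fact that their two pairs of forced blocks are disjoint. Focus on $\{\mathcal{D}_{11},\mathcal{D}_{22}\}$. For $F\in\mathcal{D}_{11}$ and $F'\in\mathcal{D}_{22}$ the common blocks $F\cap F'$ contain $C$ (that is $t-1$ blocks) but neither $a_1,b_1$ (absent from $F'$) nor $a_2,b_2$ (absent from $F$); since $\mathcal{F}$ is $t$-intersecting, $F$ and $F'$ must share a further block, which is necessarily a common block lying inside $V_0:=[n]\setminus(\cup C\cup a_1\cup a_2\cup b_1\cup b_2)$. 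Fixing one $F'\in\mathcal{D}_{22}$ (when it exists), each $F\in\mathcal{D}_{11}$ then contains one of the at most $k-t-1$ blocks of $F'$ contained in $V_0$; adding such a block to the $t+1$ forced blocks $C\cup\{a_1,b_1\}$ produces $t+2$ pairwise disjoint blocks, so by (\ref{equntkt}) we get $|\mathcal{D}_{11}|\le (k-t-1)\spn{n-t-2}{k-t-2}$. By symmetry, in each diagonal pair at least one family satisfies this small bound (the other being trivially at most $\spn{n-t-1}{k-t-1}$), and Lemma \ref{lemmaspn2lkt} gives $2(k-t-1)\spn{n-t-2}{k-t-2}<\spn{n-t-1}{k-t-1}$; hence each diagonal pair totals at most $\spn{n-t-1}{k-t-1}$, and the four corners at most $2\spn{n-t-1}{k-t-1}$.

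Combining the two steps, the $t(t+4)$ non-corner pieces each contribute at most $\spn{n-t-2}{k-t-2}$, so
\[
|\mathcal{F}|\le 2\spn{n-t-1}{k-t-1}+t(t+4)\spn{n-t-2}{k-t-2}.
\]
To finish I would compare this with $r(n,k,t)$. For $k\ge t+4$, Lemma \ref{equsizerlb}(i) gives $r>(k-t-1-\tfrac{1}{(t+1)^2})\spn{n-t-1}{k-t-1}+t\ge\tfrac{11}{4}\spn{n-t-1}{k-t-1}$, while Lemma \ref{lemmaspn2lkt} bounds the second term above by $\tfrac15\spn{n-t-1}{k-t-1}$, so $|\mathcal{F}|<r$. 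For $k=t+3$ one has $\spn{n-t-2}{1}=1$ and $r>\tfrac94\spn{n-t-1}{2}+t$ by Lemma \ref{equsizerlb}(ii), so it suffices to check $t(t+4)<\tfrac14\spn{n-t-1}{2}+t$, which holds since $n\ge 2L(t+3,t)$ makes $\spn{n-t-1}{2}=2^{n-t-2}-1$ enormous compared with $t^2+3t$.

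The step I expect to be the main obstacle is the diagonal cross-intersecting argument: verifying cleanly that two ``opposite'' corners can only meet through a block of $V_0$, and then converting the $t$-intersecting hypothesis into the bound $|\mathcal{D}_{11}|\le (k-t-1)\spn{n-t-2}{k-t-2}$ whenever the opposite corner is nonempty. Care is also needed to confirm that the four corners are the only pieces with $|R_{a,b}|=t+1$, and that none of the estimates secretly assume the blocks of $T_1,T_2$ are singletons.
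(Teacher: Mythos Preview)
Your proof is correct and takes a genuinely different route from the paper. The paper's argument leans heavily on maximality: it first proves (Claim~1) that $|\cup(T_1\cup T_2)|\ge n-2(k-t-2)$ by constructing, when this fails, explicit $k$-partitions $F_1\supseteq T_1$ and $F_2\supseteq T_2$ with $F_1\cap F_2=T_1\cap T_2$; in the hard case $|C|=t-1$ it then boosts this (Claim~2) to $|\cup T_i|\ge t+6$, which is what ultimately tames the analogues of your corner pieces. By contrast, your $(t+2)\times(t+2)$ grid decomposition together with the diagonal cross-intersecting observation --- that members of $\mathcal{D}_{11}$ and $\mathcal{D}_{22}$ must share a block inside $V_0$, forcing one side of each diagonal pair down to $(k-t-1)\spn{n-t-2}{k-t-2}$ --- bypasses both claims entirely and never uses maximality. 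This makes your argument cleaner and in fact slightly stronger: it applies to any $t$-intersecting family (not just maximal ones) with two $(t+1)$-block covers meeting in fewer than $t$ blocks. The paper's approach, on the other hand, extracts concrete structural information about the supports of $T_1,T_2$ that your argument does not see; that information is not needed here, but it illustrates a technique used repeatedly later in the paper. Your concern about the diagonal step is unwarranted: whenever both $\mathcal{D}_{11}$ and $\mathcal{D}_{22}$ are nonempty the blocks $a_1,b_1$ (resp.\ $a_2,b_2$) are automatically disjoint, so the $(t+2)$-block count and the bound via (\ref{equntkt}) go through without any singleton assumption.
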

\begin{proof}
Write $s=|T_1\cap T_2|$ and $w=|\cup(T_1\cup T_2)|$. Let us begin with the following claim.\\
{\bf Claim 1.}\;$w\geq n-2(k-t-2)$.
	
We prove this claim by contradiction. Assume that $w\leq  n-2(k-t-2)-1$, then we can find a $3$-partition of $\overline{\cup(T_1\cup T_2)}$, say $\{A,B,C\}$, such that $|A|=|B|=k-t-2$. Consider 
	\begin{align*}
		F_1&:=T_1\cup[A]\cup\{B\cup C\cup(\cup T_2\setminus\cup T_1)\},\\
		F_2&:=T_2\cup[B]\cup\{A\cup C\cup(\cup T_1\setminus\cup T_2)\}.
	\end{align*}
	Then $F_i\in\spn{[n]}{k}$, $T_i\subseteq F_i$ for $i=1,2$, and $T_1\cap T_2=F_1\cap F_2$. Since $\mathcal{F}$ is maximal, $F_1,F_2\in\mathcal{F}$ and so $t-1\geq|T_1\cap T_2|=|F_1\cap F_2|\geq t$, a contradiction. Hence this claim is true.
	
Since $\mathcal{F}$ is maximal, and $T_1$ and $T_2$ are $t$-covers, every $k$-partition of $[n]$ containing $T_1$ or $T_2$ lies in $\mathcal{F}$. Let $F\in\mathcal{F}\setminus(\mathcal{F}_{T_1}\cup\mathcal{F}_{T_2})$, then $|T_1\cap F|=|T_2\cap F|=t$. If $T_1\cap T_2\nsubseteq F$, then there exists $B\in T_1\cap T_2$ such that $(T_1\cup T_2)\setminus\{B\}\subseteq F$. If $T_1\cap T_2\subseteq F$, then there exist $A_1\in T_1\setminus T_2$ and $A_2\in T_2\setminus T_1$ such that $(T_1\cup T_2)\setminus\{A_1,A_2\}\subseteq F$. Therefore, we can estimate $|\mathcal{F}|$ in two parts, which are
\begin{align}
	\mathcal{F}_1:=\mathcal{F}_{T_1}\cup\mathcal{F}_{T_2}\cup(\cup_{B}\mathcal{F}_{T_1\cup T_2\setminus\{B\}})\;\mbox{and}\;\mathcal{F}_2:=\cup_{(A_1,A_2)}\mathcal{F}_{T_1\cup T_2\setminus\{A_1,A_2\}},\label{equlemmanotintsmallf1f2}
\end{align} 	
where $B$ ranges over the blocks of $T_1\cap T_2$, and $(A_1,A_2)$ ranges over the set $$\mathcal{D}:=\{(A_1,A_2):A_1\in T_1\setminus T_2,A_2\in T_2\setminus T_1\}.$$ 
Using Lemma \ref{equsizerlb} we get $r(n,k,t)>\frac{9}{4}\spn{n-t-1}{k-t-1}$ for $k\geq t+3$. Hence, to prove this lemma, it suffices to verify
\begin{equation}\label{lemmanotintsmallgoal}
	|\mathcal{F}_1|+|\mathcal{F}_2|\leq\frac{9}{4}\spn{n-t-1}{k-t-1}.
\end{equation}
\noindent {\bf Case 1.}\;$s\leq t-2$. 

Note that $|T_1\cup T_2|=2t+2-s\geq t+4$. From (\ref{equlemmanotintsmallf1f2}), (\ref{equntkt}) and Lemma \ref{lemmaspn2lkt}, we obtain
\begin{align*}
	|\mathcal{F}|&\leq2\spn{n-t-1}{k-t-1}+s\spn{n-t-3}{k-t-3}+(t+1-s)^2\spn{n-t-2}{k-t-2}\\
	&<\left(2+\frac{s+(t+1-s)^2}{(t+1)^2(k-t+1)^2}\right)\spn{n-t-1}{k-t-1}\leq\frac{33}{16}\spn{n-t-1}{k-t-1}.
\end{align*}
Then (\ref{lemmanotintsmallgoal}) holds.\\
\noindent {\bf Case 2.}\;$s=t-1$.

Using Claim 1 we infer
\begin{align}
	w&\geq2L(k,t)-2(k-t-2)\nonumber\\
	&=2t+8+2(k-t+1)(-1+\log_2(t+1)(k-t+1))\nonumber\\
	&\geq4k-2t+12\geq2t+24.\label{equlemmanotintsmall1}
\end{align}
The following claim is deduced by the maximality of $\mathcal{F}$ as well. \\
\noindent {\bf Claim 2.}\;$|\cup T_i|\geq t+6$ for $i=1,2.$

We prove by contradiction again. Without loss of generality, assume that $|\cup T_1|\leq t+5$. Suppose that $T_2\setminus T_1=\{X,X'\}$ and $|X|\geq|X'|$. It follows from $X\cup X'\supseteq\cup(T_1\cup T_2)\setminus\cup T_1$ and (\ref{equlemmanotintsmall1}) that 
$$|X\cup X'|\geq w-(t+5)\geq 4k-3t+7\geq2(k-t+5),$$
 and so $|X|\geq k-t+5$. Then we can pick $Y\in\binom{X}{k-t-2}$ with $Y\cap(\cup(T_1\setminus T_2))=\emptyset$ as $|\cup(T_1\setminus T_2)|\leq(t+5)-(t-1)=6$. Then 
$$F_3:=T_1\cup[Y]\cup\{\overline{(\cup T_1)\cup Y}\}$$
is a $k$-partition of $[n]$, and $X\notin F_3$. Since $T_1\subseteq F_3$ and $\mathcal{F}$ is maximal, $F_3\in\mathcal{F}$. Then $X'\in F_3$ as $T_2$ is a $t$-cover of $\mathcal{F}$. It is clearly that $X'\notin T_1\cup[Y]$, implying that $X'=\overline{(\cup T_1)\cup Y}$. Now $F_3=T_1\cup[Y]\cup\{X'\}$, and then every element of $X\setminus Y$ must appear in some block of $T_1\setminus T_2$, namely, $X\setminus Y\subseteq\cup(T_1\setminus T_2)$. However, this leads to $6\geq|\cup(T_1\setminus T_2)|\geq|X|-|Y|\geq7$, which is impossible. Thus Claim 2 holds.

From Claim 2, (\ref{equnkn-1k}) and Lemma \ref{lemmaspn2lkt}, we derive that
\begin{align}
	|\mathcal{F}_1|&<2\spn{n-t-6}{k-t-1}+(t-1)\spn{n-t-2}{k-t-2}\nonumber\\
	&<\left(2(k-t-1)^{-5}+(k-t+1)^{-2}\right)\spn{n-t-1}{k-t-1}\leq\frac{1}{8}\spn{n-t-1}{k-t-1}.\label{equlemmanotintsmall3}
\end{align}
Therefore, to prove (\ref{lemmanotintsmallgoal}), it remains to prove $|\mathcal{F}_2|\leq\frac{17}{8}\spn{n-t-1}{k-t-1}$.\\
{\bf Case 2.1.}\;$T_1\cap T_2$ contains a block of size at least two. 

Now from (\ref{equnkn-1k}) and (\ref{equlemmanotintsmallf1f2}), we get
\begin{align*}
	|\mathcal{F}_2|\leq4\spn{n-t-2}{k-t-1}<4(k-t-1)^{-1}\spn{n-t-1}{k-t-1}\leq2\spn{n-t-1}{k-t-1}.
\end{align*}
\noindent {\bf Case 2.2.}\;$T_1\cap T_2$ consists of singletons. 

By Claim 2, both $T_1\setminus T_2$ and $T_2\setminus T_1$ contain some block of size larger than one. Denote $$\mathcal{D}':=\{(A_1,A_2)\in\mathcal{D}:|C|=1\;\mbox{for all}\;C\in T_1\cup T_2\setminus\{A_1,A_2\}\},$$
then $|\mathcal{D}'|\leq1$. If $\mathcal{D}'=\emptyset$ then $|\cup(T_1\cup T_2\setminus\{A_1,A_2\})|\geq t+2$ for all $(A_1,A_2)\in\mathcal{D}$. Otherwise, assume that $\mathcal{D}'=\{(A,B)\}$. Then  $|A|\geq2,|B|\geq2$, and (\ref{equlemmanotintsmall1}) gives $|A\cup B|\geq t+23$. Hence, we deduce from (\ref{equnkn-1k}) that 
\begin{align*}
	|\mathcal{F}_2|&\leq\max\left\{4\spn{n-t-2}{k-t-1},\spn{n-t-1}{k-t-1}+2\spn{n-t-2}{k-t-1}+\spn{n-2t-22}{k-t-1}\right\}\\
	&<(1+2(k-t-1)^{-1}+(k-t-1)^{-22})\spn{n-t-1}{k-t-1}<2.01\spn{n-t-1}{k-t-1}.
\end{align*}

Thus (\ref{lemmanotintsmallgoal}) is true, which completes the proof.
\end{proof}
We are in a position to investigate families satisfying the following assumption.
\begin{assumption}\label{assumption}
	Let $k\geq t+3$ and $n\geq2L(k,t)$, where $L(k,t)$ is defined in (\ref{equfunlkt}). Let $\mathcal{F}\subseteq\spn{[n]}{k}$ be a maximal $t$-intersecting family with $t$-covering number $t+1$. Let $\mathcal{T}$ be as introduced in (\ref{equfamcoverset}). Suppose that $\mathcal{T}$ is $t$-intersecting as a set system.
\end{assumption}
\begin{lemma}\label{lemmatnontrivial}
	Let $n,k,t,\mathcal{F}$ and $\mathcal{T}$ be as in Assumption \ref{assumption}. If $|\cap\mathcal{T}|<t$, then there is a $(t+2)$-partition $Z$ such that $\mathcal{F}=\mathcal{A}(Z)$, and we have $|\mathcal{F}|\leq|\mathcal{A}(n,k,t)|$, with equality only if $\mathcal{F}\cong\mathcal{A}(n,k,t)$.
\end{lemma}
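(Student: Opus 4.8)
The plan is to analyze the set system $\mathcal{T}$ first, then transfer the resulting structure to $\mathcal{F}$ via maximality, and finally reduce the extremal count to a monotonicity statement about Stirling partition numbers.

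First I would work entirely with $\mathcal{T}$, viewing each $(t+1)$-partition as its set of $t+1$ blocks. By Assumption \ref{assumption}, $\mathcal{T}$ is then a $t$-intersecting family of $(t+1)$-sets, so two distinct members agree in exactly $t$ blocks. Since $|\cap\mathcal{T}|<t$ forces $|\mathcal{T}|\ge 2$, I pick distinct $T_1,T_2\in\mathcal{T}$ and write $S=T_1\cap T_2$ with $|S|=t$, $T_1=S\cup\{a\}$, $T_2=S\cup\{b\}$ and $a\ne b$. Because $\cap\mathcal{T}\subseteq S$ while $|\cap\mathcal{T}|<t=|S|$, some $T_3\in\mathcal{T}$ omits a block $s_0\in S$; comparing $T_3$ with $T_1$ and $T_2$ and using $|T_3\cap T_i|\ge t$ forces $T_3=(S\setminus\{s_0\})\cup\{a,b\}$. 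Setting $Z:=S\cup\{a,b\}$, the three defining relations show its $t+2$ blocks are pairwise disjoint, so $Z$ is a $(t+2)$-partition. I would then prove $\mathcal{T}\subseteq\{Z\setminus\{B\}:B\in Z\}$: if some $T\in\mathcal{T}$ contained a block outside $Z$, then $|T\cap Z|\le t$, whereas $T_i\subseteq Z$ and $|T\cap T_i|\ge t$ force $|T\cap Z|=t$ and $T\cap Z\subseteq T_1\cap T_2\cap T_3=S\setminus\{s_0\}$, a set of only $t-1$ blocks --- a contradiction.

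Next I would upgrade this to a statement about every $F\in\mathcal{F}$. Each $T_i$ is a $t$-cover, so $|F\cap T_i|\ge t$ for $i=1,2,3$. Writing $\sigma=|F\cap S|$ and letting $\alpha,\beta$ be the indicators of $a\in F$ and $b\in F$, these read $\sigma+\alpha\ge t$, $\sigma+\beta\ge t$, and $\sigma-[s_0\in F]+\alpha+\beta\ge t$. A short case split on whether $s_0\in F$ then yields $\sigma+\alpha+\beta\ge t+1$ in every case, that is $|F\cap Z|\ge t+1$; hence $\mathcal{F}\subseteq\mathcal{A}(Z)$. Since $\mathcal{A}(Z)$ is itself $t$-intersecting (two of its members each meet the $(t+2)$-partition $Z$ in at least $t+1$ blocks, so share at least $t$ blocks by inclusion--exclusion), the maximality of $\mathcal{F}$ forces $\mathcal{F}=\mathcal{A}(Z)$.

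It remains to bound $|\mathcal{A}(Z)|$ over all $(t+2)$-partitions $Z$. Using the formula $|\mathcal{A}(Z)|=\sum_{B\in Z}\spn{n-|\cup Z|+|B|}{k-t-1}-(t+1)\spn{n-|\cup Z|}{k-t-2}$ recorded before the lemma, I would compare $Z$ with the partition $Z'$ obtained by removing one element from a block of size $\ge 2$ (leaving that element uncovered). Writing $w=|\cup Z|$, the increment is $\sum_{j}\bigl(\spn{n-w+b_j+1}{k-t-1}-\spn{n-w+b_j}{k-t-1}\bigr)-(t+1)\bigl(\spn{n-w+1}{k-t-2}-\spn{n-w}{k-t-2}\bigr)$ over the $t+1$ unchanged blocks. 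The recurrence (\ref{equrecurrence}) rewrites each forward difference of $\spn{\cdot}{k-t-1}$ as $\spn{\cdot}{k-t-2}$ plus a strictly positive multiple of $\spn{\cdot}{k-t-1}$, from which the increment is seen to be strictly positive; iterating down to the all-singleton partition $[[t+2]]$ strictly increases $|\mathcal{A}(Z)|$ at each step. This gives $|\mathcal{F}|=|\mathcal{A}(Z)|\le|\mathcal{A}(n,k,t)|$ with equality exactly when $Z$ consists of $t+2$ singletons, i.e.\ $\mathcal{F}\cong\mathcal{A}(n,k,t)$. I expect this last step to be the most delicate part: both terms of $|\mathcal{A}(Z)|$ shrink as blocks grow, so the sign of the net change is not visible by crude monotonicity and must be pinned down through the Stirling-number inequalities of Section \ref{section4}, together with a careful reading of the equality case; by contrast the identification of $Z$ is clean finite-set combinatorics.
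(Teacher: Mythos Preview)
Your structural argument—locating $T_1,T_2,T_3$, identifying $Z=T_1\cup T_2$ as a $(t+2)$-partition via the disjointness of $a,b$ read off from $T_3$, and then forcing $|F\cap Z|\ge t+1$ for every $F\in\mathcal{F}$—is exactly the paper's proof, only with the case split written out explicitly where the paper says in one line that every member of $\mathcal{F}$ meets each $T_i$ in $\ge t$ blocks and hence lies in $\mathcal{A}(T_1\cup T_2)$.

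The extremal step is where you diverge. The paper does not iterate: if $Z$ has a block $B$ with $|B|\ge 2$ it observes $|\cup(Z\setminus\{C\})|\ge t+2$ for every $C\ne B$, drops the negative term in the formula for $|\mathcal{A}(Z)|$ to get $|\mathcal{A}(Z)|\le\spn{n-t-1}{k-t-1}+(t+1)\spn{n-t-2}{k-t-1}$, subtracts (\ref{equsizea}), and lets the recurrence (\ref{equrecurrence}) collapse the difference to $-(t+1)(k-t-2)\spn{n-t-2}{k-t-1}<0$. Your shrinking argument is more informative (it shows $Z\mapsto|\mathcal{A}(Z)|$ is monotone in the block-shrinking partial order), but be careful with the phrase ``strictly increases at each step'': expanding via (\ref{equrecurrence}) gives
\[
\Delta\;\ge\;(t+1)\spn{n-w}{k-t-2}\;+\;(k-t-2)\sum_{j}\spn{n-w+b_j}{k-t-1}\;\ge\;0,
\]
and both summands can vanish when $n-w<k-t-2$, so individual steps may be only weakly increasing. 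What saves you is that the \emph{last} step (from a $Z$ with one doubleton and $w=t+3$ down to $[[t+2]]$) is strict because $n\ge 2L(k,t)>k+1$, which is enough for the equality characterization; you should phrase it that way rather than claim strictness throughout. The paper's one-shot bound avoids this bookkeeping entirely, at the cost of not yielding the full monotonicity.
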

\begin{proof}
	Obviously $|\mathcal{T}|\geq2$. Let $T_1, T_2$ be two distinct elements of $\mathcal{T}$. Then $|T_1\cap T_2|=t$.  Since $|\cap\mathcal{T}|<t$, there exists $T_3\in\mathcal{T}$ such that $|T_3\cap(T_1\cap T_2)|<t$. Then $|T_3\cap T_1|=|T_3\cap T_2|=t$ yields that $T_3=(T_1\cup T_2)\setminus\{B\}$ for some $B\in T_1\cap T_2$. In particular, this implies that the block of $T_1\setminus T_2$ and that of $T_2\setminus T_1$ are disjoint as they lie in the partition $T_3$, and thus $T_1\cup T_2$ is a $(t+2)$-partition. Every member of $\mathcal{T}$ or $\mathcal{F}$ shares at least $t$ blocks with $T_i$ whenever $i\in\{1,2,3\}$, from which we obtain that  $\cup\mathcal{T}=T_1\cup T_2\cup T_3=T_1\cup T_2$, and then
\begin{equation*}
	\mathcal{F}\subseteq\left\{F\in\spn{[n]}{k}:|F\cap(T_1\cup T_2)|\geq t+1\right\}=\mathcal{A}(T_1\cup T_2).
\end{equation*}
By the maximality of $\mathcal{F}$, we obtain $\mathcal{F}=\mathcal{A}(T_1\cup T_2)$. 

Write $Z=T_1\cup T_2$ for short. If $Z$ consists of $t+2$ singletons, then $\mathcal{F}\cong\mathcal{A}(n,k,t)$. Suppose that $Z$ contains some block, say $B$, with $|B|\geq2$. Then $|\cup(Z\setminus\{C\})|\geq t+2$ for all $C\in Z\setminus\{B\}$, and so
\begin{align*}
	|\mathcal{F}|&<\spn{n-|\cup(Z\setminus\{B\})|}{k-t-1}+\sum_{C\in Z\setminus\{B\}}\spn{n-|\cup(Z\setminus\{C\})|}{k-t-1}\\
	&\leq\spn{n-t-1}{k-t-1}+(t+1)\spn{n-t-2}{k-t-1}.
\end{align*}
This together with (\ref{equsizea}) and (\ref{equrecurrence}) yields
\begin{align*}
|\mathcal{F}|-|\mathcal{A}(n,k,t)|&<-(t+1)\left(\spn{n-t-1}{k-t-1}-\spn{n-t-2}{k-t-1}-\spn{n-t-2}{k-t-2}\right)\\
&=-(t+1)(k-t-2)\spn{n-t-2}{k-t-1}<0.
\end{align*}
This finishes the proof.
\end{proof}
\begin{lemma}\label{lemmattrivial1}
Let $n,k,t,\mathcal{F}$ and $\mathcal{T}$ be as in Assumption \ref{assumption}. If $|\mathcal{T}|=1$, then $|\mathcal{F}|<r(n,k,t)$.
\end{lemma}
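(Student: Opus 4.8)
The plan is to fix the unique minimal cover and then show that the part of $\mathcal{F}$ it does not trivially generate is negligibly small. Write $\mathcal{T}=\{T\}$ with $T=\{B_1,\dots,B_{t+1}\}$ and set $W=\overline{\cup T}$. Since $T$ is a $t$-cover and $\mathcal{F}$ is maximal, every $k$-partition containing $T$ already belongs to $\mathcal{F}$; collecting these in $\mathcal{F}_0$, (\ref{equntkt}) gives $|\mathcal{F}_0|\le\spn{n-t-1}{k-t-1}$. Each remaining $F\in\mathcal{F}$ meets $T$ in exactly $t$ blocks, hence misses exactly one $B_i$; letting $\mathcal{F}_i$ be the members missing $B_i$ we get $\mathcal{F}=\mathcal{F}_0\sqcup\bigsqcup_i\mathcal{F}_i$. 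Because $\tau_t(\mathcal{F})=t+1$, no $t$-partition $T\setminus\{B_i\}$ is a $t$-cover, which forces at least two of the $\mathcal{F}_i$ to be non-empty. As in Lemma \ref{lemmanotintsmall}, Lemma \ref{equsizerlb} yields $r(n,k,t)>\tfrac94\spn{n-t-1}{k-t-1}$, so it suffices to prove $\sum_i|\mathcal{F}_i|<\spn{n-t-1}{k-t-1}$, whence $|\mathcal{F}|<2\spn{n-t-1}{k-t-1}<r(n,k,t)$.

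First I record a cross-intersection principle: for $i\ne j$, members $F\in\mathcal{F}_i$ and $F'\in\mathcal{F}_j$ already share the $t-1$ blocks of $T\setminus\{B_i,B_j\}$, so $t$-intersection forces one further common block, which must lie in $W_i\cap W_j=W$ (here $W_i:=B_i\cup W$). The crux is to use $|\mathcal{T}|=1$ to force each member of $\mathcal{F}\setminus\mathcal{F}_0$ to carry \emph{two} prescribed blocks inside $W$. Fixing members $F^\ast,F^{\ast\prime}$ in two distinct active groups, every $F$ is cross-intersecting with whichever of them lies in a different group, so $F$ contains a block $C$ from the at most $2(k-t-1)$ blocks of $F^\ast$ or $F^{\ast\prime}$ lying in $W$. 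Now the decisive point: the replacement $(T\setminus\{B_i\})\cup\{C\}$ is a $(t+1)$-partition different from $T$, so by uniqueness it is \emph{not} a $t$-cover, which means $C$ cannot belong to every member of $\bigcup_{j\ne i}\mathcal{F}_j$. Choosing $F^{\ast\ast}=F^{\ast\ast}(C,i)$ there with $C\notin F^{\ast\ast}$, the cross-intersection supplies a second common block $C'\in W$ of $F$ and $F^{\ast\ast}$, disjoint from and distinct from $C$ and drawn from the at most $k-t-1$ blocks of $F^{\ast\ast}$ inside $W$.

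Consequently every $F\in\mathcal{F}_i$ contains the $(t+2)$-partition $(T\setminus\{B_i\})\cup\{C,C'\}$, where the triple $(i,C,C')$ ranges over a set of size at most $(t+1)\cdot 2(k-t-1)\cdot(k-t-1)$; by (\ref{equntkt}) this gives $\sum_i|\mathcal{F}_i|\le 2(t+1)(k-t-1)^2\spn{n-t-2}{k-t-2}$. Applying Lemma \ref{lemmaspn2lkt} with $s=1$ collapses the prefactor, since it gives $\spn{n-t-2}{k-t-2}<\tfrac{1}{(t+1)^2(k-t+1)^2}\spn{n-t-1}{k-t-1}$, whence $\sum_i|\mathcal{F}_i|<\tfrac{2}{t+1}\spn{n-t-1}{k-t-1}\le\spn{n-t-1}{k-t-1}$, as needed. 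The step I expect to demand the most care is the forcing of the second block: this is exactly where $|\mathcal{T}|=1$ (rather than merely $\tau_t(\mathcal{F})=t+1$) is used, and the bookkeeping must be arranged so that $F^{\ast\ast}$ depends only on $C$ and $i$, not on $F$, so that the bounded index set $(i,C,C')$ genuinely covers all of $\mathcal{F}\setminus\mathcal{F}_0$. Everything else is routine in view of (\ref{equntkt}), (\ref{equnkn-1k}) and the Stirling-number estimates of Lemmas \ref{lemmaspn2lkt} and \ref{equsizerlb}.
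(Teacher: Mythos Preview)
Your argument is correct and follows essentially the same strategy as the paper's proof: decompose $\mathcal{F}\setminus\mathcal{F}_T$ according to the missing block of $T$, force each such member to contain a $(t+2)$-partition $(T\setminus\{B_i\})\cup\{C,C'\}$ by first using $t$-intersection with a member of some other $\mathcal{F}_j$ and then invoking $|\mathcal{T}|=1$ to obtain a second extra block, count to get $|\mathcal{F}|<2\spn{n-t-1}{k-t-1}$, and finish via Lemma~\ref{equsizerlb}. The only notable difference is packaging: the paper obtains the first extra block from a single witness $G$ (depending on $X$) and then delegates the ``second block'' step to Lemma~\ref{lemmainductive} applied with $s=t+1$, $r=t-1$, giving the bound $(t+1)(k-t)(k-t+1)\spn{n-t-2}{k-t-2}$; you instead fix global witnesses $F^\ast,F^{\ast\prime}$ for the first block and construct $F^{\ast\ast}(C,i)$ by hand for the second, arriving at the comparable bound $2(t+1)(k-t-1)^2\spn{n-t-2}{k-t-2}$. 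Your explicit cross-intersection makes the role of $|\mathcal{T}|=1$ very transparent, while the paper's version is shorter because the inductive lemma has already been set up; neither approach gains anything substantive over the other.
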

\begin{proof}
Suppose $\mathcal{T}=\{T\}$. Since $T$ is a $t$-cover, every member of $\mathcal{F}$ contains all but at most one block of $T$, and hence $$\mathcal{F}\setminus\mathcal{F}_T=\textstyle\cup_{X\in\binom{T}{t}}(\mathcal{F}_X\setminus\mathcal{F}_T).$$ 
Fix an $X\in\binom{T}{t}$. Since $\mathcal{F}$ is non-trivial, there exists $G\in\mathcal{F}$ with $X\nsubseteq G$. Then $G\cap T=T\setminus\{B\}$ for some $B\in X$, and so $|F\cap(G\cup\{B\})|\geq t+1$ for each $F\in\mathcal{F}_{X}\setminus\mathcal{F}_T$. It follows that
 \begin{equation}\label{equlemmattrivial13}
 \mathcal{F}_{X}\setminus\mathcal{F}_T=\cup_{H\in\mathcal{H}}\mathcal{F}_H,
 \end{equation}
  where $\mathcal{H}$ is the collection of $(t+1)$-subsets $H$ of $G\cup\{B\}$ satisfying  $X\subseteq H$ and $H\neq T$. Clearly, $|\mathcal{H}|=k-t$. Let $H\in\mathcal{H}$. If $H$ is not a partition, then obviously $\mathcal{F}_H=\emptyset$. Suppose that $H$ is a partition. Note that $H$ does not lies in $\mathcal{T}=\{T\}$ and so it is not a $t$-cover of $\mathcal{F}$. Hence $|H\cap F'|<t$ for some $F'\in\mathcal{F}$. Then $|H\cap F'|\geq|X\cap F'|\geq t-1$ gives $|H\cap F'|=t-1$. By applying Lemma \ref{lemmainductive} to $\ell=k,s=t+1$ and $r=t-1$, we obtain
 $|\mathcal{F}_H|\leq(k-t+1)\spn{n-t-2}{k-t-2}$. 
 By combining this with (\ref{equlemmattrivial13}), we deduce from (\ref{equntkt}) and  Lemma \ref{lemmaspn2lkt} that
 \begin{align*}
 |\mathcal{F}|&\leq|\mathcal{F}_T|+\textstyle\sum_{X\in\binom{T}{t}}|\mathcal{F}_X\setminus\mathcal{F}_T|\\
 &\leq\spn{n-t-1}{k-t-1}+(t+1)(k-t)(k-t+1)\spn{n-t-2}{k-t-2}<2\spn{n-t-1}{k-t-1}. \end{align*}
 Finally, we get $|\mathcal{F}|<r(n,k,t)$ from Lemma \ref{equsizerlb}.
\end{proof}
\begin{lemma}\label{lemmattrivial2}
Let $n,k,t,\mathcal{F}$ and $\mathcal{T}$ be as in Assumption \ref{assumption}. Suppose that  $|\cap\mathcal{T}|=t$. Then $T\cap T'=\cap\mathcal{T}$ for all $T\neq T'\in\mathcal{T}$, and $\cup\mathcal{T}$ is a partition with at most $k$ blocks. Moreover, if $F\in\mathcal{F}$ does not contain $\cap\mathcal{T}$, then $(\cup\mathcal{T})\cap F=(\cup\mathcal{T})\setminus\{B\}$ for some $B\in\cap\mathcal{T}$.
\end{lemma}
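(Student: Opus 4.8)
The plan is to first pin down the combinatorial shape of $\mathcal{T}$ as a sunflower, and then read off all three assertions from one well-chosen member of $\mathcal{F}$.

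I would start with the first assertion. Each $T\in\mathcal{T}$ is a $(t+1)$-partition, so for distinct $T,T'\in\mathcal{T}$ we have $|T\cap T'|\le t$, since two distinct $(t+1)$-partitions cannot agree on all $t+1$ blocks; combined with the $t$-intersection of $\mathcal{T}$ granted by Assumption \ref{assumption}, this forces $|T\cap T'|=t$. As $G:=\cap\mathcal{T}\subseteq T\cap T'$ and $|G|=t$, we conclude $T\cap T'=G$. In particular every $T$ splits as $T=G\cup\{B_T\}$ for a unique block $B_T\notin G$, distinct members of $\mathcal{T}$ carry distinct blocks $B_T$, and each $B_T$ is disjoint from $\cup G$ because $T$ is a partition.

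The heart of the argument is to produce a member of $\mathcal{F}$ that avoids the core. Since $G$ is a $t$-partition and $\mathcal{F}$, having $t$-covering number $t+1$, is non-trivial, some $F\in\mathcal{F}$ satisfies $G\not\subseteq F$. I claim exactly one block of $G$ is absent from such an $F$: if two blocks $B,B'\in G$ were missing, then $|F\cap T|\le(t-2)+1<t$ for every $T\in\mathcal{T}$, contradicting that $T$ is a $t$-cover. So precisely one block $B\in G$ is missing, and then $|F\cap T|\ge t$ forces $G\setminus\{B\}\subseteq F$ together with $B_T\in F$, for every $T\in\mathcal{T}$ simultaneously. Hence $F\supseteq(\cup\mathcal{T})\setminus\{B\}$ while $B\notin F$, i.e.\ $(\cup\mathcal{T})\cap F=(\cup\mathcal{T})\setminus\{B\}$; applying this to an arbitrary such $F$ yields the ``moreover'' statement.

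The remaining structural claims then follow from this single $F$. Because the one partition $F$ contains every block $B_T$ at once, the blocks $B_T$ are pairwise disjoint, and together with their disjointness from $\cup G$ this shows $\cup\mathcal{T}=G\cup\{B_T:T\in\mathcal{T}\}$ is a genuine partition. For the block count I would observe that the elements of the missing block $B$ must still be covered by $F$, yet $B$ is disjoint from every block of $(\cup\mathcal{T})\setminus\{B\}$; thus $F$ contains at least one block outside $(\cup\mathcal{T})\setminus\{B\}$, giving $|\cup\mathcal{T}|=|(\cup\mathcal{T})\setminus\{B\}|+1\le|F|=k$. I expect this last step to be the only delicate point: the bound $|\cup\mathcal{T}|\le k+1$ is immediate, and sharpening it to $\le k$ is exactly where one must use that $B$ itself demands a covering block outside $(\cup\mathcal{T})\setminus\{B\}$.
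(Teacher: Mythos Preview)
Your proof is correct and follows essentially the same route as the paper's: decompose each $T\in\mathcal{T}$ as $(\cap\mathcal{T})\cup\{B_T\}$, choose $F\in\mathcal{F}$ with $\cap\mathcal{T}\not\subseteq F$, and read off from the $t$-cover condition that $F$ contains every $B_T$ (making $\cup\mathcal{T}$ a partition) and that $(\cup\mathcal{T})\cap F=(\cup\mathcal{T})\setminus\{B\}$. Your justification of the bound $|\cup\mathcal{T}|\le k$ via ``$B$ needs a covering block of $F$ outside $(\cup\mathcal{T})\setminus\{B\}$'' is in fact a bit more explicit than the paper's terser remark that $F\neq\cup\mathcal{T}$ forces $|(\cup\mathcal{T})\cap F|\le k-1$.
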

\begin{proof}
Since $|\cap\mathcal{T}|=t$ and $\mathcal{T}$ consists of $(t+1)$-sets, we have  $|\mathcal{T}|\geq2$, and every two members of $\mathcal{T}$ have intersection $\cap\mathcal{T}$. Set $T=(\cap\mathcal{T})\cup\{B_T\}$ for $T\in\mathcal{T}$, then
\begin{equation}\label{equlemmattrivial21}
\cup\mathcal{T}=(\cap\mathcal{T})\cup\{B_T:T\in\mathcal{T}\}.
\end{equation}
 Note that $\mathcal{F}$ is non-trivial, some partitions in $\mathcal{F}$ do not contain $\cap\mathcal{T}$. Fix any one of them, say $F_0$. Then for every $T\in\mathcal{T}$, we have from $|T\cap F_0|\geq t$ that $B_T\in F_0$, and thus $\{B_T:T\in\mathcal{T}\}\subseteq F_0$. This yields that $\{B_T:T\in\mathcal{T}\}$ is a partition, and so is $\cup\mathcal{T}$. From (\ref{equlemmattrivial21}), we obtain that $(\cup\mathcal{T})\cap F_0=(\cup\mathcal{T})\setminus\{B\}$ for some $B\in\cap\mathcal{T}$. It follows from $F_0\neq\cup\mathcal{T}$ that $|(\cup\mathcal{T})\cap F_0|\leq k-1$, and so $|\cup\mathcal{T}|\leq(k-1)+1=k$.
\end{proof}
\begin{lemma}\label{lemmattrivial3baby}
Let $n,k,t,\mathcal{F}$ and $\mathcal{T}$ be as in Assumption \ref{assumption}. Suppose that $|\cap\mathcal{T}|=t$ and $t+2\leq|\cup\mathcal{T}|\leq k-2$. Then $|\mathcal{F}|<r(n,k,t)$.
\end{lemma}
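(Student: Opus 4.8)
The plan is to exploit the structure handed to us by Lemma~\ref{lemmattrivial2}. Write $W=\cap\mathcal{T}$, a $t$-partition, and $\mathcal{B}=\{B_T:T\in\mathcal{T}\}$, so that $\cup\mathcal{T}=W\cup\mathcal{B}$ is a partition with $|\mathcal{B}|=|\mathcal{T}|=m-t$, where $m:=|\cup\mathcal{T}|$ satisfies $t+2\le m\le k-2$. Lemma~\ref{lemmattrivial2} tells us that each $F\in\mathcal{F}$ either contains $W$, or else $(\cup\mathcal{T})\cap F=(\cup\mathcal{T})\setminus\{B\}$ for some $B\in W$, in which case $W\setminus\{B\}$ and all of $\mathcal{B}$ lie in $F$. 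Accordingly I would split $\mathcal{F}=\mathcal{F}^{(1)}\sqcup\mathcal{F}^{(2)}\sqcup\mathcal{F}^{(3)}$, where $\mathcal{F}^{(1)}=\{F:W\subseteq F,\,F\cap\mathcal{B}\neq\emptyset\}$, $\mathcal{F}^{(2)}=\{F:W\subseteq F,\,F\cap\mathcal{B}=\emptyset\}$ and $\mathcal{F}^{(3)}=\{F:W\not\subseteq F\}$. The main term is handled immediately: by maximality $\mathcal{F}^{(1)}=\bigcup_{T\in\mathcal{T}}\mathcal{S}(T)$, so (\ref{equntkt}) and $m\le k-2$ give $|\mathcal{F}^{(1)}|\le(m-t)\spn{n-t-1}{k-t-1}\le(k-t-2)\spn{n-t-1}{k-t-1}$.

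The crucial input is the hypothesis $|\cap\mathcal{T}|=t$, which forces $\mathcal{T}$ to be \emph{exactly} the set of $(t+1)$-covers of $\mathcal{F}$: for any block $D\subseteq\overline{\cup\mathcal{T}}$ the $(t+1)$-partition $W\cup\{D\}$ is not a $t$-cover (its extra block $D\notin\mathcal{B}$), so some member of $\mathcal{F}$ meets it in fewer than $t$ blocks; since every member containing $W$ meets it in at least $t$, that witness lies in $\mathcal{F}^{(3)}$ and avoids $D$. I would use this to control $\mathcal{F}^{(2)}$. Fix one $F'\in\mathcal{F}^{(3)}$ (nonempty by non-triviality). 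Each $F\in\mathcal{F}^{(2)}$ must share with $F'$ a common block $D\subseteq\overline{\cup\mathcal{T}}$; applying the above witness for that $D$ forces $F$ to contain a \emph{second} such block $D^*\neq D$, so $F\in\mathcal{S}(W\cup\{D,D^*\})$. Counting over the at most $(k-m+1)^2$ pairs $(D,D^*)$ and using (\ref{equntkt}) yields $|\mathcal{F}^{(2)}|\le(k-m+1)^2\spn{n-t-2}{k-t-2}$, which Lemma~\ref{lemmaspn2lkt} makes negligible against $\spn{n-t-1}{k-t-1}$.

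For $\mathcal{F}^{(3)}$ I would start from $\mathcal{F}^{(3)}\subseteq\bigcup_{B\in W}\{F:(W\setminus\{B\})\cup\mathcal{B}\subseteq F\}$, giving $|\mathcal{F}^{(3)}|\le t\spn{n-m+1}{k-m+1}$. This is already negligible when $m\ge t+3$. The boundary case $m=t+2$ is the delicate one, since there $\spn{n-m+1}{k-m+1}=\spn{n-t-1}{k-t-1}$ is not small; here I would first observe that $|\cap\mathcal{T}|=t$ forces $\mathcal{F}^{(2)}\neq\emptyset$ (otherwise $(W\setminus\{B\})\cup\mathcal{B}$ would itself be a $(t+1)$-cover omitting the block $B\in W$, contradicting $\cap\mathcal{T}=W$), and then run the same two-block argument with a witness $F^*\in\mathcal{F}^{(2)}$ to obtain $|\mathcal{F}^{(3)}|\le t(k-t-1)\spn{n-t-2}{k-t-2}$. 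Combining both regimes, $|\mathcal{F}^{(2)}|+|\mathcal{F}^{(3)}|\le(k-t-1)(k-1)\spn{n-t-2}{k-t-2}$.

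Summing the three parts, $|\mathcal{F}|\le(k-t-2)\spn{n-t-1}{k-t-1}+(k-t-1)(k-1)\spn{n-t-2}{k-t-2}$. Lemma~\ref{lemmaspn2lkt} together with the elementary estimate $(k-t-1)(k-1)<\bigl((t+1)^2-1\bigr)(k-t+1)^2$ (which follows from $k-1<(t+1)(k-t+1)$) bounds the second term by $\bigl(1-\tfrac{1}{(t+1)^2}\bigr)\spn{n-t-1}{k-t-1}$, so $|\mathcal{F}|<\bigl(k-t-1-\tfrac{1}{(t+1)^2}\bigr)\spn{n-t-1}{k-t-1}<r(n,k,t)$ by Lemma~\ref{equsizerlb}(i). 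I expect the main obstacle to be the control of $\mathcal{F}^{(2)}$: a naive union bound over the blocks of a single $F'\in\mathcal{F}^{(3)}$ gives only the useless estimate of order $(k-t-1)\spn{n-t-1}{k-t-1}$, and the essential idea — applying the ``$W\cup\{D\}$ is not a cover'' consequence of $|\cap\mathcal{T}|=t$ to \emph{every} block $D$, large or small, so as to trap each member of $\mathcal{F}^{(2)}$ inside a doubly restricted star $\mathcal{S}(W\cup\{D,D^*\})$ — is exactly what recovers the extra factor needed to fall below $r(n,k,t)$; the case $m=t+2$, where $\mathcal{F}^{(3)}$ is not automatically small, demands the separate nonemptiness argument for $\mathcal{F}^{(2)}$.
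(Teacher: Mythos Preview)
Your proof is correct and follows essentially the same three-part decomposition as the paper's own argument, finishing with the same appeal to Lemmas~\ref{lemmaspn2lkt} and~\ref{equsizerlb}(i). The only differences are cosmetic: where the paper bounds $\mathcal{F}^{(2)}$ and the $m=t+2$ subcase of $\mathcal{F}^{(3)}$ by invoking Lemma~\ref{lemmainductive} (applied with $r=t-1$), you unwrap that lemma by hand via your two-step witness argument; and your detour through $\mathcal{F}^{(2)}\neq\emptyset$ to treat $\mathcal{F}^{(3)}$ when $m=t+2$ is replaced in the paper by the more direct observation that $M\setminus\{B\}$ itself (for each $B\in W$) fails to be a $t$-cover.
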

\begin{proof}
Write $X:=\cap\mathcal{T}$, $M:=\cup\mathcal{T}$ and $m:=|M|$ for short. Since $\mathcal{F}$ is non-trivial,  $\mathcal{F}\setminus\mathcal{F}_X\neq\emptyset$. We will estimate the size of $\cup_{T\in\mathcal{T}}\mathcal{F}_T$, $\mathcal{F}_X\setminus\cup_{T\in\mathcal{T}}\mathcal{F}_T$ and $\mathcal{F}\setminus\mathcal{F}_X$, respectively. 

Firstly, we have
\begin{equation}\label{equlemmattrivial3case31}
	|\cup_{T\in\mathcal{T}}\mathcal{F}_T|\leq\sum_{T\in\mathcal{T}}\spn{n-|\cup T|}{k-t-1}\leq(m-t)\spn{n-t-1}{k-t-1}.
\end{equation}

To proceed, fix an $F\in\mathcal{F}\setminus\mathcal{F}_X$. From Lemma \ref{lemmattrivial2}, we have $F\cap M=M\setminus\{B\}$ for some $B\in X$. Then $F\setminus M$ has $k-m+1$ blocks, and at least one of them intersects $B$ as $B\subseteq\cup(F\setminus M)$. Let $H$ be the collection of blocks of $F\setminus M$ which is disjoint from $B$, then $|H|\leq k-m$. For each  $G\in\mathcal{F}_X\setminus\cup_{T\in\mathcal{T}}\mathcal{F}_T$, we have $G\cap M=X$, and then $\emptyset\neq G\cap(F\setminus M)\subseteq H$ as $G\cap F\cap M=X\setminus\{B\}$ has only $t-1$ blocks and $B\in G$. Therefore, 
\begin{equation}\label{equlemmattrivial3part2}
	\mathcal{F}_X\setminus\cup_{T\in\mathcal{T}}\mathcal{F}_T=\cup_{C\in H}\mathcal{F}_{X\cup\{C\}}.
\end{equation}
Let $C\in H$. Note that $C\notin M$, then  $X\cup\{C\}$ is not a $t$-cover of $\mathcal{F}$. Also note that $X=\cap\mathcal{T}$. Then some member of $\mathcal{F}$ shares exactly $t-1$ blocks with $X\cup\{C\}$. Combining this with Lemma \ref{lemmainductive} and (\ref{equlemmattrivial3part2}), we derive that
\begin{equation}\label{equlemmattrivial3case33}
	|\mathcal{F}_X\setminus\cup_{T\in\mathcal{T}}\mathcal{F}_T|\leq\sum_{C\in H}|\mathcal{F}_{X\cup\{C\}}|\leq(k-m)(k-t+1)\spn{n-t-2}{k-t-2}.
\end{equation}

Now consider $\mathcal{F}\setminus\mathcal{F}_X$. By Lemma \ref{lemmattrivial2}, we have $\mathcal{F}\setminus\mathcal{F}_X=\cup_{B\in X}\mathcal{F}_{M\setminus\{B\}}$. Fix a $B\in X$. Suppose firstly that $m=t+2$.  Then $|M\setminus\{B\}|=t+1$, and $B\in X=\cap\mathcal{T}$ yields that  $M\setminus\{B\}$ is not a $t$-cover. Hence, $|F'\cap(M\setminus\{B\})|=t-1$ for some $F'\in\mathcal{F}$. It follows from Lemma \ref{lemmainductive} that $|\mathcal{F}_{M\setminus\{B\}}|\leq(k-t+1)\spn{n-t-2}{k-t-2}$. If $m\geq t+3$, then $|M\setminus\{B\}|\geq t+2$, and so  $|\mathcal{F}_{M\setminus\{B\}}|\leq\spn{n-t-2}{k-t-2}$. Therefore, we obtain
\begin{align}
	|\mathcal{F}\setminus\mathcal{F}_X|\leq\sum_{B\in X}|\mathcal{F}_{M\setminus\{B\}}|\leq t(k-t+1)\spn{n-t-2}{k-t-2}.\label{equlemmattrivial3part32}
\end{align} 

Finally, from (\ref{equlemmattrivial3case31}), (\ref{equlemmattrivial3case33}), (\ref{equlemmattrivial3part32}) and $t+2\leq m\leq k-2$, we obtain 
$$|\mathcal{F}|\leq(k-t-2)\spn{n-t-1}{k-t-1}+(k-2)(k-t+1)\spn{n-t-2}{k-t-2}.$$
It is routine to check that $(k-2)<(t+1)(k-t+1)$ for $k\geq t+3$. Then Lemmas \ref{lemmaspn2lkt} and  \ref{equsizerlb} (i) yields $|\mathcal{F}|<\left(k-t-2+\frac{1}{t+1}\right)\spn{n-t-1}{k-t-1}<r(n,k,t)$, as desired.
\end{proof}
\begin{lemma}\label{lemmattrivial3}
	Let $n,k,t,\mathcal{F}$ and $\mathcal{T}$ be as in Assumption \ref{assumption}. Suppose that $|\cap\mathcal{T}|=t$ and $|\cup\mathcal{T}|\in\{k-1,k\}$. The following hold.
\begin{itemize}
\item[\rm (i)]If $\mathcal{F}$ is isomorphic to neither $\mathcal{H}(n,k,t)$ nor $\mathcal{H}_1(n,k,t)$, then $|\mathcal{F}|<r(n,k,t)$.
\item[\rm (ii)]We have $|\mathcal{F}|\leq|\mathcal{H}(n,k,t)|$, with equality only if $\mathcal{F}\cong\mathcal{H}(n,k,t)$.
\end{itemize}
\end{lemma}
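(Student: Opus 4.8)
The plan is to split on whether $M:=\cup\mathcal{T}$ has $k$ or $k-1$ blocks, writing $X:=\cap\mathcal{T}$ throughout. By Lemma \ref{lemmattrivial2}, $X$ is a $t$-partition, $M$ is a genuine partition, $T\cap T'=X$ for distinct $T,T'\in\mathcal{T}$, and every $F\in\mathcal{F}\setminus\mathcal{F}_X$ satisfies $M\cap F=M\setminus\{B\}$ for some $B\in X$. Since $\mathcal{T}=\{X\cup\{B_T\}:T\in\mathcal{T}\}$, the blocks of $M\setminus X$ are exactly the $B_T$, so $M\setminus X$ has $m-t$ blocks, where $m:=|M|$. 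Because every isomorphism preserves $|\cup\mathcal{T}|$ and both $\mathcal{H}(n,k,t)$ and $\mathcal{H}_1(n,k,t)$ have $|\cup\mathcal{T}|=k$, in the case $m=k-1$ it suffices to prove $|\mathcal{F}|<r(n,k,t)$, while the isomorphism types $\mathcal{H},\mathcal{H}_1$ can only arise when $m=k$.

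In the case $m=k$, the partition $M$ has $k$ blocks, so each $F\in\mathcal{F}\setminus\mathcal{F}_X$ contains the $k-1$ blocks of $M\setminus\{B\}$ together with a single block filling $\overline{\cup(M\setminus\{B\})}=B\cup\overline{\cup M}$; that is, $F=(M\setminus\{B\})\cup\{B\cup\overline{\cup M}\}$. Moreover any $F\in\mathcal{F}_X$ must meet such a member (which exists by non-triviality) in $t$ blocks, and since $F$ contains $B$ rather than $B\cup\overline{\cup M}$ this forces $F\cap(M\setminus X)\neq\emptyset$. Together with maximality and the fact that $\mathcal{H}(X,M)$ is $t$-intersecting, I would conclude $\mathcal{F}=\mathcal{H}(X,M)$, and then evaluate $|\mathcal{H}(X,M)|=t+|\{F:X\subseteq F,\ F\cap(M\setminus X)\neq\emptyset\}|$ by inclusion–exclusion (Lemma \ref{lemmain-ex}) over the blocks of $M\setminus X$, with leading term $\sum_{C\in M\setminus X}\spn{n-|\cup X|-|C|}{k-t-1}$.

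The crux is a monotonicity comparison in the block sizes. Writing the excess $e:=|\cup M|-k=(|\cup X|-t)+\sum_{C\in M\setminus X}(|C|-1)\geq0$, I would use (\ref{equnkn-1k}) and Lemma \ref{lemmaspn2lkt} to show that each unit of excess decreases the leading term by strictly more than the gap $\spn{n-t-1}{k-t-1}-\spn{n-t-2}{k-t-1}$, while the inclusion–exclusion tail remains negligible. Consequently $|\mathcal{H}(X,M)|$ is maximized exactly when $e=0$ (all singletons), i.e. $\mathcal{F}\cong\mathcal{H}(n,k,t)$, which gives (ii); the only other configuration staying at or above $r(n,k,t)$ is $e=1$ with the size-$2$ block lying in $M\setminus X$, i.e. $\mathcal{F}\cong\mathcal{H}_1(n,k,t)$; and every remaining configuration ($e=1$ with the extra element inside $X$, or $e\geq2$) satisfies $|\mathcal{F}|<r(n,k,t)$, which is (i). Here I would feed the explicit estimates for $r$ from (\ref{equfunr}), (\ref{equsizehlb}) and Lemma \ref{equsizerlb}.

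For $m=k-1$ I would argue as follows. When $k=t+3$ we have $m=t+2$, and each $M\setminus\{B\}$ with $B\in X$ is a $(t+1)$-partition that $t$-intersects every member of $\mathcal{F}$ (checking $\mathcal{F}_X$ and the $F$ of the form $(M\setminus\{B'\})\cup\{P,Q\}$ separately), hence lies in $\mathcal{T}$; then $\cap\mathcal{T}\subseteq\bigl(\bigcap_{B\in X}(M\setminus\{B\})\bigr)\cap X=(M\setminus X)\cap X=\emptyset$, contradicting $|\cap\mathcal{T}|=t$, so this subcase is vacuous. For $k\geq t+4$, each $M\setminus\{B\}$ has $k-2\geq t+2$ blocks, so $|\mathcal{F}_{M\setminus\{B\}}|=\spn{|B\cup\overline{\cup M}|}{2}\leq\spn{n-k+2}{2}$ and $\mathcal{F}\setminus\mathcal{F}_X=\bigcup_{B\in X}\mathcal{F}_{M\setminus\{B\}}$ is negligible compared with $\spn{n-t-2}{k-t-1}$; meanwhile $|\mathcal{F}_X|\leq\sum_{C\in M\setminus X}\spn{n-|\cup X|-|C|}{k-t-1}\leq(k-t-1)\spn{n-t-1}{k-t-1}$, and since by Lemma \ref{lemmaspn2lkt} the quantity $r(n,k,t)-(k-t-1)\spn{n-t-1}{k-t-1}\geq\spn{n-t-2}{k-t-1}-\binom{k-t}{2}\spn{n-t-2}{k-t-2}+t$ is positive, we get $|\mathcal{F}|<r(n,k,t)$. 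Finally, (ii) follows from (i) together with $|\mathcal{H}_1(n,k,t)|<|\mathcal{H}(n,k,t)|$ (immediate from (\ref{equnkn-1k})) and $r(n,k,t)<|\mathcal{H}(n,k,t)|$ (noted after (\ref{equfunr})). I expect the main obstacle to be the uniform control of $|\mathcal{H}(X,M)|$ across all block-size profiles in the case $m=k$: one must balance the leading inclusion–exclusion term against its tail precisely enough that exactly the two profiles corresponding to $\mathcal{H}(n,k,t)$ and $\mathcal{H}_1(n,k,t)$ survive the threshold $r(n,k,t)$, and the secondary difficulty is that the crude bound on $|\mathcal{F}_X|$ in the $m=k-1$, $k\geq t+4$ regime already matches the leading coefficient of $r$, so the decision rests entirely on the second-order Stirling-number estimates.
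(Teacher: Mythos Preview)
Your treatment of the case $m=k-1$ has a genuine gap: you implicitly assume $\mathcal{F}_X\subseteq\mathcal{G}(X,M):=\{G:X\subseteq G,\ G\cap(M\setminus X)\neq\emptyset\}$, but this can fail. Concretely, when you bound $|\mathcal{F}_X|\leq\sum_{C\in M\setminus X}\spn{n-|\cup X|-|C|}{k-t-1}$ (in the $k\geq t+4$ subcase), and when you claim $|F\cap(M\setminus\{B\})|\geq t$ for every $F\in\mathcal{F}_X$ (in the $k=t+3$ subcase), you are using exactly this unproven inclusion. It holds when $m=k$, because each $F_0\in\mathcal{F}\setminus\mathcal{F}_X$ has a single block outside $M$ meeting $B$, so any $G\in\mathcal{F}_X$ can only reach $|G\cap F_0|\geq t$ through $M\setminus X$. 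But when $m=k-1$, the members of $\mathcal{F}\setminus\mathcal{F}_X$ have the form $(M\setminus\{B\})\cup\{C_1,C_2\}$ with $\{C_1,C_2\}$ a $2$-partition of $B\cup\overline{\cup M}$, and one of $C_1,C_2$ may lie entirely inside $\overline{\cup M}$; a $G\in\mathcal{F}_X$ can then $t$-intersect $F_0$ via $X\setminus\{B\}$ together with that block, without touching $M\setminus X$ at all. So $\mathcal{F}_X\setminus\mathcal{G}(X,M)$ can be nonempty, and your $k=t+3$ vacuity argument and your $k\geq t+4$ bound both collapse.

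The paper confronts this directly: its Case~2 splits according to whether $\mathcal{F}_X=\mathcal{G}(X,M)$ (Case~2.1, where your argument essentially works and one deduces $k\geq t+4$) or $\mathcal{F}_X\setminus\mathcal{G}(X,M)\neq\emptyset$ (Case~2.2). In Case~2.2 one introduces the set $Q$ of blocks $C\subseteq\overline{\cup M}$ arising from members of $\mathcal{F}\setminus\mathcal{F}_X$, shows $2\leq|Q|\leq k-t-1$, observes $X\cup Q\subseteq G$ for every $G\in\mathcal{F}_X\setminus\mathcal{G}(X,M)$, and thereby bounds the extra contribution by $\spn{n-t-2}{k-t-2}+2t$. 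You need an argument of this kind; nothing in your outline substitutes for it. Your handling of $m=k$ is along the same lines as the paper's, though the paper avoids a general ``excess'' parameter by a short explicit case split (the non-singleton is in $X$; at least two non-singletons in $M\setminus X$; exactly one non-singleton $B_0\in M\setminus X$ with $|B_0|\geq3$), comparing each to the auxiliary thresholds $u_1,u_2$ that are shown separately to lie below $r$.
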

\begin{proof}
 Note that Lemma \ref{lemmahh1v} (i) gives $\max\{r(n,k,t),|\mathcal{H}_1(n,k,t)|\}<|\mathcal{H}(n,k,t)|$, then (ii) follows directly from (i), and so we need only to prove (i). Let us introduce two expressions.
 \begin{align}
 	u_1(n,k,t)&:=(k-t-2)\spn{n-t-1}{k-t-1}+2\spn{n-t-2}{k-t-1}+2t.\label{equfunu1}\\
 	u_2(n,k,t)&:=\sum_{j=1}^{3}(-1)^{j-1}\binom{k-t-1}{j}\spn{n-t-j}{k-t-j}+\spn{n-t-3}{k-t-1}+2t.\label{equfunu2}
 \end{align} 
We will prove in Lemma \ref{lemmathm34} that $\max\{u_1(n,k,t),u_2(n,k,t)\}<r(n,k,t)$. Then it suffices to prove that, if $\mathcal{F}$ is isomorphic to neither $\mathcal{H}(n,k,t)$ nor $\mathcal{H}_1(n,k,t)$, then 
 \begin{equation}\label{equlemmattrivial3goal}
 	|\mathcal{F}|\leq\max\{u_1(n,k,t),u_2(n,k,t)\}.
 \end{equation}

 For simplicity, for partitions $Y$ and $N$ with $Y\subsetneqq N$, set
\begin{equation*}
	\mathcal{G}(Y,N):=\left\{G\in\spn{[n]}{k}:Y\subseteq G,\;G\cap(N\setminus Y)\neq\emptyset\right\}.
\end{equation*}
Set $X:=\cap\mathcal{T}$ and  $M:=\cup\mathcal{T}$. By Lemma \ref{lemmattrivial2} and the maximality of $\mathcal{F}$, we have 
\begin{equation}\label{equgxm'}
\mathcal{G}(X,M)=\left\{G\in\spn{[n]}{k}:T\subseteq G\;\mbox{for some}\;T\in\mathcal{T}\right\}\subseteq\mathcal{F}_X.
\end{equation}
\noindent {\bf Case 1.}\;$|M|=k$. 

In this case, Lemma \ref{lemmattrivial2} gives
\begin{equation}\label{equlemmattrivial31}
	\mathcal{F}\setminus\mathcal{F}_X\subseteq\{(M\setminus\{B\})\cup\{B\cup\left(\overline{\cup M}\right)\}:B\in X\}.
\end{equation} 
Note that $\mathcal{F}\setminus\mathcal{F}_X\neq\emptyset$ implies $\cup M\subsetneqq[n]$. Fix an $F\in\mathcal{F}\setminus\mathcal{F}_X$, and let $G\in\mathcal{F}_X$. Then the block of $F\setminus M$ does not lie in $G$ as  $X\subseteq G$. It follows that $F\cap G\subseteq M$, and then $G\cap(M\setminus X)\neq\emptyset$ as $|F\cap G|\geq t$ and $X\nsubseteq F$. Hence 
$\mathcal{F}_X\subseteq\mathcal{G}(X,M)$. Now (\ref{equgxm'}) yields   $\mathcal{F}_X=\mathcal{G}(X,M)$. Since  $\mathcal{F}$ is maximal, equality in (\ref{equlemmattrivial31}) holds, and consequently $\mathcal{F}=\mathcal{H}(X,M).$

If $|\cup X|>t$, then $|\cup X|+|C|\geq t+2$ for each $C\in M\setminus X$, and so  $|\mathcal{F}|\leq(k-t)\spn{n-t-2}{k-t-1}+t<u_1(n,k,t)$. If $X$ contains only singletons, and $M\setminus X$ contains at least two blocks of size larger than one, then $|\mathcal{F}|\leq(k-t-2)\spn{n-t-1}{k-t-1}+2\spn{n-t-2}{k-t-1}+t<u_1(n,k,t)$. Suppose that there exists $B_0\in M\setminus X$ such that $M\setminus\{B_0\}$ consists of  singletons. Now $\mathcal{F}\cong\mathcal{H}(n,k,t)$ when $|B_0|=1$, and $\mathcal{F}\cong\mathcal{H}_1(n,k,t)$ when $|B_0|=2$. If $|B_0|\geq3$, then we   rewrite $\mathcal{F}_X$ as 
$$\mathcal{G}(X,M\setminus\{B_0\})\cup\left\{F\in\spn{[n]}{k}:X\cup\{B_0\}\subseteq F\right\},$$
and then derive from Lemma \ref{lemmain-ex} and (\ref{equntkt}) that
\begin{align*}
	|\mathcal{F}|&\leq\sum_{j=1}^{3}(-1)^{j-1}\binom{k-t-1}{j}\spn{n-t-j}{k-t-j}+\spn{n-t-|B_0|}{k-t-1}+t<u_2(n,k,t).
\end{align*} 
Therefore, (\ref{equlemmattrivial3goal}) holds if $\mathcal{F}$ is isomorphic to neither $\mathcal{H}(n,k,t)$ nor $\mathcal{H}_1(n,k,t)$.\\
\noindent {\bf Case 2.}\;$|M|=k-1$.

If $M$ contains some block of size at least two, then there exists $C\in M\setminus X$ such that $|\cup X|+|C|\geq t+2$, and then
\begin{equation}\label{equlemmattrivial3case21}
	|\mathcal{G}(X,M)|\leq\sum_{C\in M\setminus X}\spn{n-(|\cup X|+|C|)}{k-(t+1)}\leq u_1(n,k,t)-\spn{n-t-2}{k-t-1}-2t.
\end{equation}
If $M$ consists of singletons, then we use (\ref{equgxm'}) and Lemma \ref{lemmain-ex} to establish
\begin{align}
	|\mathcal{G}(X,M)|&\leq u_2(n,k,t)-\spn{n-t-3}{k-t-1}-2t.\label{equlemmattrivial3case22}
\end{align}
From (\ref{equnkn-1k}) and Lemma \ref{lemmaspn2lkt'} (ii), we have   $\spn{n-t-2}{k-t-1}>\spn{n-t-3}{k-t-1}>t\spn{n-t-2}{k-t-2}$. By combining this with  (\ref{equlemmattrivial3case21}) and (\ref{equlemmattrivial3case22}), to prove   (\ref{equlemmattrivial3goal}), it suffices to prove
\begin{equation}\label{equlemmattrivial3goal'}
|\mathcal{F}\setminus\mathcal{G}(X,M)|\leq t\spn{n-t-2}{k-t-2}+2t.
\end{equation}
Let us consider whether $\mathcal{F}_X=\mathcal{G}(X,M)$.\\
{\bf Case 2.1.}\;$\mathcal{F}_X=\mathcal{G}(X,M)$.

From Lemma \ref{lemmattrivial2} and the maximality of $\mathcal{F}$, we deduce that
\begin{align*}
	\mathcal{F}&=\mathcal{G}(X,M)\cup\left\{(M\setminus\{B\})\cup P:B\in X,P\in\spn{B\cup\left(\overline{\cup M}\right)}{2}, B\notin P\right\}.
\end{align*}
Let $B\in X$, then $M\setminus\{B\}$ forms a $t$-covers of $\mathcal{F}$ with $k-2$ blocks. Note that $M\setminus\{B\}\notin\mathcal{T}$ as $X=\cap\mathcal{T}$, from which we infer $k-2\neq t+1$, or equivalently, $k\geq t+4$. It follows that
 \begin{align}
 	|\mathcal{F}\setminus\mathcal{G}(X,M)|&\leq\sum_{B\in X}\left(\spn{|B|+n-|\cup M|}{2}-1\right)\nonumber\\&<t\spn{n-k+2}{2}
 	=t\spn{n-(k-2)}{k-(k-2)}\leq t\spn{n-t-2}{k-t-2},\nonumber
 \end{align}
which justifies (\ref{equlemmattrivial3goal'}).

\noindent {\bf Case 2.2.}\;$\mathcal{F}_X\setminus\mathcal{G}(X,M)\neq\emptyset$.

Let  $G\in\mathcal{F}_X\setminus\mathcal{G}(X,M)$ and let $F\in\mathcal{F}\setminus\mathcal{F}_X$. Then $M\cap G=X$. From Lemma \ref{lemmattrivial2}, we have  $M\setminus F=\{B\}$ for some $B\in X$, and  $F\setminus M$ forms a $2$-partition of $B\cup\left(\overline{\cup M}\right)$. Since $|F\cap G|\geq t$ and $F\cap G\cap M=X\setminus\{B\}$ has only $t-1$ blocks, $G$ contains some block of $F\setminus M$. This block must be disjoint from $B$ as $B\in G$, namely, it is a subset of $\overline{\cup M}$. Set
\begin{equation*}
	Q:=\{C\subseteq\overline{\cup M}:C\in F_1\setminus M\;\mbox{for some}\;F_1\in\mathcal{F}\setminus\mathcal{F}_X\}.
\end{equation*} 
Then one can check that 
\begin{equation*}
	\mathcal{F}_X\setminus\mathcal{G}(X,M)\subseteq\mathcal{G}':=\left\{G\in\spn{[n]}{k}: M\cap G=X, X\cup Q\subseteq G\right\}
\end{equation*} 
and
\begin{equation*}
	\mathcal{F}\setminus\mathcal{F}_X\subseteq\mathcal{G}'':=\left\{(M\setminus\{B\})\cup\{C,B\cup\overline{C\cup(\cup M)}\}: B\in X, C\in Q\right\}.
\end{equation*}
Therefore, we deduce from the maximality of $\mathcal{F}$ that $$\mathcal{F}=\mathcal{G}(X,M)\cup\mathcal{G'}\cup\mathcal{G}''.$$

We claim that $2\leq|Q|\leq k-t-1$. Firstly, assume that $|Q|=1$. Then $X\cup Q$ forms a $t$-cover of $\mathcal{F}$ with $t+1$ blocks, and so $X\cup Q\in\mathcal{T}$. However, this implies that $\cup Q\subseteq M$, which is a contradiction. Thus $|Q|\geq2$. For a $k$-partition in  $\mathcal{F}_X\setminus\mathcal{G}(X,M)$, some of its blocks intersects $\cup(M\setminus X)$, and necessarily this block does not lie in $X\cup Q$. Then $k\geq1+|X|+|Q|$, and thus $|Q|\leq k-t-1$, as claimed.

 Using Lemma \ref{lemmaqt}, we derive
\begin{align*}
	|\mathcal{F}\setminus\mathcal{G}(X,M)|=|\mathcal{G}'|+|\mathcal{G}''|&\leq\spn{n-t-|Q|}{k-t-|Q|}+|Q|t\leq\spn{n-t-2}{k-t-2}+2t,
\end{align*}  
and thus (\ref{equlemmattrivial3goal'}) holds. 

This completes the proof.  
\end{proof}
\noindent {\bf Proof of Theorem \ref{thm2}}.\;From Lemmas  \ref{lemmat+2small}--\ref{lemmattrivial1}, \ref{lemmattrivial3baby} and  \ref{lemmattrivial3} (ii), we obtain that   $|\mathcal{F}|\leq\max\{|\mathcal{A}(n,k,t)|,|\mathcal{H}(n,k,t)|\}$, and equality holds implies that $\mathcal{F}\cong\mathcal{A}(n,k,t)$ or $\mathcal{F}\cong\mathcal{H}(n,k,t)$. Then this theorem follows from Lemma \ref{lemmawholargest}.{\hfill$\square$}

\begin{corollary}\label{coro}
Let $k\geq t+3$ and $n\geq2L(k,t)$. If $\mathcal{F}\subseteq\spn{[n]}{k}$ is a maximal $t$-intersecting family with $|\mathcal{F}|<\spn{n-t}{k-t}$, then $|\mathcal{F}|\leq\spn{n-t-1}{k-t}$. Moreover, equality holds only if $\mathcal{F}$ consists of all $k$-partitions of $[n]$ with $t-1$ fixed singletons and one fixed block of size two.
\end{corollary}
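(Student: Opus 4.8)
The plan is to dichotomize on the $t$-covering number of $\mathcal{F}$, treating the trivial case (where $\tau_t(\mathcal{F})=t$) through the classification of maximal trivial families, and the non-trivial case (where $\tau_t(\mathcal{F})\geq t+1$) through Theorem \ref{thm2}. The equality analysis should then show that only the trivial case can be tight.

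First I would dispose of the trivial case. Suppose $\tau_t(\mathcal{F})=t$. Since $\mathcal{F}$ is maximal, $\mathcal{F}=\mathcal{S}(X)$ for some $t$-partition $X$, and by (\ref{equntkt}) we have $|\mathcal{F}|=\spn{n-|\cup X|}{k-t}$. The hypothesis $|\mathcal{F}|<\spn{n-t}{k-t}$ together with the equality condition in (\ref{equntkt}) forces $|\cup X|\geq t+1$. If $|\cup X|=t+1$, then $X$ necessarily consists of $t-1$ singletons and one block of size two, and $|\mathcal{F}|=\spn{n-t-1}{k-t}$; this is precisely the claimed extremal configuration. If $|\cup X|\geq t+2$, then since $\spn{m}{k-t}$ is strictly increasing in $m$ by (\ref{equnkn-1k}), we get $|\mathcal{F}|\leq\spn{n-t-2}{k-t}<\spn{n-t-1}{k-t}$.

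Next I would handle the non-trivial case $\tau_t(\mathcal{F})\geq t+1$. Here Theorem \ref{thm2} applies and gives $|\mathcal{F}|\leq\max\{|\mathcal{A}(n,k,t)|,\,|\mathcal{H}(n,k,t)|\}$, so it suffices to show this maximum is strictly below $\spn{n-t-1}{k-t}$. I would bound each term by a constant multiple of $\spn{n-t-1}{k-t-1}$: formula (\ref{equsizea}) gives $|\mathcal{A}(n,k,t)|<(t+2)\spn{n-t-1}{k-t-1}$, while (\ref{equsizeh}) together with the $s=1$ case of Lemma \ref{lemmain-ex} gives $|\mathcal{H}(n,k,t)|\leq(k-t)\spn{n-t-1}{k-t-1}+t<(k-t+1)\spn{n-t-1}{k-t-1}$. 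Hence $\max\{|\mathcal{A}(n,k,t)|,\,|\mathcal{H}(n,k,t)|\}<\max\{t+2,\,k-t+1\}\spn{n-t-1}{k-t-1}$.

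The crux, and the only real obstacle, is the comparison of $\spn{n-t-1}{k-t}$ with $\spn{n-t-1}{k-t-1}$. By the recurrence (\ref{equrecurrence}) one has $\spn{n-t-1}{k-t}=\frac{1}{k-t}\big(\spn{n-t}{k-t}-\spn{n-t-1}{k-t-1}\big)$, and Lemma \ref{lemmaspn2lkt} with $s=0$ yields $\spn{n-t}{k-t}>(t+1)^2(k-t+1)^2\spn{n-t-1}{k-t-1}$. Combining these gives $\spn{n-t-1}{k-t}>\frac{(t+1)^2(k-t+1)^2-1}{k-t}\spn{n-t-1}{k-t-1}$, and the elementary estimate $\frac{(t+1)^2(k-t+1)^2-1}{k-t}>(t+1)^2(k-t+2)>\max\{t+2,\,k-t+1\}$ then forces $|\mathcal{F}|<\spn{n-t-1}{k-t}$ in the non-trivial case. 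Assembling the two cases proves $|\mathcal{F}|\leq\spn{n-t-1}{k-t}$, and since strict inequality holds throughout the non-trivial case and in the trivial subcase $|\cup X|\geq t+2$, equality can occur only in the trivial subcase $|\cup X|=t+1$, which is exactly the family of all $k$-partitions with $t-1$ fixed singletons and one fixed block of size two. Everything outside the final Stirling-number estimate is routine bookkeeping built on Theorem \ref{thm1}, Theorem \ref{thm2}, and the stated size formulas.
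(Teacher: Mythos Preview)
Your proof is correct and follows essentially the same approach as the paper: split into trivial and non-trivial cases, handle the trivial case via Construction~\ref{familyf} and (\ref{equntkt}), and in the non-trivial case bound $\max\{|\mathcal{A}(n,k,t)|,|\mathcal{H}(n,k,t)|\}$ by a multiple of $\spn{n-t-1}{k-t-1}$ and compare with $\spn{n-t-1}{k-t}$. The only cosmetic difference is that the paper invokes Lemma~\ref{lemmaspn2lkt'}\,(i) (the ready-made bound $\spn{n-t-1}{k-t}>(t+1)^2(k-t+1)\spn{n-t-1}{k-t-1}$) for the final comparison, whereas you rederive an equivalent estimate from the recurrence (\ref{equrecurrence}) and Lemma~\ref{lemmaspn2lkt}; both routes are immediate.
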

\begin{proof}
Suppose firstly that $\mathcal{F}$ is trivial. Then as presented in Construction \ref{familyf}, there is a $t$-partition $X$ such that $\mathcal{F}=\left\{F\in\spn{[n]}{k}:X\subseteq F\right\}$, and $|\mathcal{F}|=\spn{n-|\cup X|}{k-t}$. Then  $|\mathcal{F}|<\spn{n-t}{k-t}$ yields $|\cup X|>t$, and hence $|\mathcal{F}|\leq\spn{n-t-1}{k-t}$, with equality precisely if $|\cup X|=t+1$, namely, $X$ contains $t-1$ singletons and one block of size two.

 It remains to verify $|\mathcal{F}|<\spn{n-t-1}{k-t}$ provided that $\mathcal{F}$ is non-trivial. In fact, in this case we have from Theorem \ref{thm2}, (\ref{equsizea}) and (\ref{equsizeh}) that $|\mathcal{F}|\leq\max\{t+2,k-t\}\cdot\spn{n-t-1}{k-t-1}+t$. On the other hand, Lemma \ref{lemmaspn2lkt'} (i) gives $\spn{n-t-1}{k-t}>(t+1)^2(k-t+1)\spn{n-t-1}{k-t-1}$. Thus $|\mathcal{F}|<\spn{n-t-1}{k-t}$.
\end{proof}

\noindent {\bf Proof of Theorem \ref{thm3}}. Let $\mathcal{F}\subseteq\spn{[n]}{k}$ be a maximal non-trivial $t$-intersecting family satisfying (i) or (ii). Then $|\mathcal{F}|\geq r(n,k,t)$. From Lemmas \ref{lemmat+2small}--\ref{lemmattrivial1}, \ref{lemmattrivial3baby} and \ref{lemmattrivial3} (i), we have  $\mathcal{F}\cong\mathcal{H}(n,k,t)$ or $\mathcal{H}_1(n,k,t)$, or $\mathcal{F}=\mathcal{A}(Z)$ for some $(t+2)$-partition $Z$.

(i)\;Suppose $k\geq2t+3$. We need only to prove that, if $\mathcal{F}=\mathcal{A}(Z)$ for some $(t+2)$-partition $Z$, then $|\mathcal{F}|<r(n,k,t)$. This follows directly from Lemmas \ref{lemmatnontrivial} and \ref{lemmahh1v} (ii).

(ii)\;Suppose $k\leq2t+2$ and $|\mathcal{F}|>|\mathcal{H}(n,k,t)|$. It follows from Lemma \ref{lemmahh1v} (i) that $|\mathcal{F}|>|\mathcal{H}_1(n,k,t)|$, and so there is a $(t+2)$-partition $Z$ such that $\mathcal{F}=\mathcal{A}(Z)$. {\hfill$\square$}
\begin{remark}\label{rmk}
Suppose that $k\sim(1+a)t$ for some $a>0$. Let $c=(\ln2)/2$ and $n=cL(k,t)$. A routine but tedious computation involving (\ref{equspn}) and Lemma \ref{lemmain-ex} derives the estimations $\spn{n-t-1}{k-t}\sim e^{-x_0}\frac{(k-t)^{n-t-1}}{(k-t)!}$ and $\spn{n-t-2}{k-t-1}\sim e^{-x_0}\frac{(k-t-1)^{n-t-2}}{(k-t-1)!}$, where $x_0=ae^{(1-c)a^{-1}}$. Then one can check that $(k-t)\spn{n-t-1}{k-t}/((k-t-1)\spn{n-t-2}{k-t-1})\sim e^{-(1-c)a^{-1}}t$, and from which obtain  that $\spn{n-t}{k-t}-|\mathcal{A}(n,k,t)|=(k-t)\spn{n-t-1}{k-t}-(t+1)(k-t-1)\spn{n-t-2}{k-t-1}<0$ for all sufficiently large $t$. Therefore, the least value for $n$ such that Theorem \ref{thm1} holds equals $\Theta(L(k,t))$.
\end{remark}
\section{Some inequalities}\label{section4}
	In this section, we prove several inequalities used in this paper. We note that $L(k,t)$ and $f(m,k,t,n)$ are defined in (\ref{equfunlkt}) and (\ref{equfunf}), respectively, and the functions $r(n,k,t),u_1(n,k,t)$ and $u_2(n,k,t)$ required in Section \ref{section3} are defined in (\ref{equfunr}), (\ref{equfunu1}) and (\ref{equfunu2}), respectively.
	\begin{lemma}\label{lemmalrtr}
		For $\ell\geq t\geq r\geq1$, we have $\binom{\ell-r}{t-r}\leq(\ell-t+1)^{t-r}.$
	\end{lemma}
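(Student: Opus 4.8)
The plan is to reduce the inequality to a clean factor-by-factor estimate of a single binomial coefficient. First I would make the substitution $N := \ell - r$ and $m := t - r$, so that the hypotheses $\ell \ge t \ge r \ge 1$ become $N \ge m \ge 0$, and the quantity $\ell - t + 1$ rewrites as $N - m + 1$. Under this change of variables the claim $\binom{\ell-r}{t-r} \le (\ell-t+1)^{t-r}$ becomes the equivalent statement
\begin{equation*}
	\binom{N}{m} \le (N-m+1)^{m},
\end{equation*}
which is now a self-contained inequality in the two variables $N \ge m \ge 0$.

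Next I would expand the binomial coefficient as a product of $m$ rational factors, writing
\begin{equation*}
	\binom{N}{m} = \prod_{j=1}^{m}\frac{N-m+j}{j}.
\end{equation*}
The key step is to bound each factor individually by the common value $N - m + 1$. Indeed, for every $j$ with $1 \le j \le m$, the inequality $\frac{N-m+j}{j} \le N-m+1$ is equivalent to $N-m+j \le j(N-m)+j$, i.e. to $0 \le (j-1)(N-m)$, which holds because $j \ge 1$ and $N \ge m$. Multiplying these $m$ factor-wise bounds together yields $\binom{N}{m} \le (N-m+1)^{m}$, which is exactly what is needed; translating back through the substitution gives the statement of the lemma.

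I should record the degenerate cases to be safe: when $t = r$ the exponent $t-r$ is zero and both sides equal $1$, while when $\ell = t$ one has $N = m$, so $\binom{N}{N} = 1 \le 1^{N}$; both are covered by the product argument (an empty product, respectively a product of factors each equal to $1$). There is essentially no hard step here: the only thing to get right is the direction of the elementary factor inequality and the bookkeeping of the substitution, so the proof is short and purely computational. The main point worth stating explicitly in the write-up is the per-factor bound $\frac{N-m+j}{j}\le N-m+1$, since that is what converts the product into a clean power of $\ell - t + 1$.
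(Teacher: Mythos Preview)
Your proof is correct and follows essentially the same approach as the paper: both write $\binom{\ell-r}{t-r}$ as a product of $t-r$ rational factors and bound each factor above by $\ell-t+1$. The only cosmetic differences are your substitution $N=\ell-r$, $m=t-r$, and that you verify the per-factor bound $\frac{N-m+j}{j}\le N-m+1$ by direct algebra, whereas the paper obtains the same bound via the monotonicity observation $\frac{b}{a}<\frac{b-1}{a-1}$ for $b>a>1$.
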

	\begin{proof}
		It suffices to suppose $t>r$. Note that $\frac{b}{a}<\frac{b-1}{a-1}$ for $b>a>1$, then 
		$$\binom{\ell-r}{t-r}=\prod_{i=0}^{t-r-1}\frac{\ell-r-i}{t-r-i}\leq(\ell-t+1)^{t-r},$$
		as desired.
	\end{proof}
\begin{lemma}\label{lemmaqkt}
	Let $t\geq1$. Then $Q(s,t):=s^{L(s+t,t)-2s-t+1}\binom{s+t}{t}^{-1}$ is increasing on $s\geq2$. In particular, $Q(s,t)\geq18.$
\end{lemma}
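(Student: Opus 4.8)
The plan is to first turn $Q$ into an explicit elementary expression. Writing $k=s+t$ in the definition \eqref{equfunlkt} of $L$ gives $L(s+t,t)=(t+1)+(s+1)\log_2\big((s+1)(t+1)\big)$, so the exponent simplifies to
\[
L(s+t,t)-2s-t+1=2-2s+(s+1)\log_2\big((s+1)(t+1)\big).
\]
Hence $Q(s,t)=s^{\,2-2s+(s+1)\log_2((s+1)(t+1))}\big/\binom{s+t}{t}$. Since $\binom{s+t}{t}=\prod_{i=1}^{t}\frac{s+i}{i}$ is a polynomial in $s$, this formula defines a smooth positive function of a \emph{real} variable $s\ge 2$ that agrees with $Q$ at integers, so it suffices to prove that this function is increasing in $s$.

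To prove monotonicity I would show $\frac{d}{ds}\ln Q(s,t)>0$ for all real $s\ge 2$ and all $t\ge 1$. Abbreviating $A:=\log_2\big((s+1)(t+1)\big)$, a direct differentiation (using $\frac{d}{ds}\ln\binom{s+t}{t}=\sum_{i=1}^{t}\frac{1}{s+i}$) yields
\[
\frac{d}{ds}\ln Q=(A+\log_2 e-2)\ln s+\tfrac2s-2+\tfrac{s+1}{s}A-\sum_{i=1}^{t}\frac{1}{s+i}.
\]
I would then bound the pieces from below by $\frac{s+1}{s}A\ge A$, $\frac2s-2\ge-2$, and the harmonic sum by the integral comparison $\sum_{i=1}^{t}\frac{1}{s+i}\le\ln\frac{s+t}{s}$. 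The crucial step is the estimate $\ln\frac{s+t}{s}\le\ln\big((s+1)(t+1)\big)-\ln s=(\ln 2)A-\ln s$, which follows from $s+t\le(s+1)(t+1)$. Substituting these reduces the derivative to the lower bound $(A+\log_2 e-1)\ln s+(1-\ln 2)A-2$; since $A\ge\log_2 6$ (as $(s+1)(t+1)\ge 6$) and $\ln s\ge\ln 2$, this expression is increasing in both $A$ and $\ln s$, hence minimized at $s=2,\ t=1$, where a one-line numerical check gives a value $\approx 0.89>0$.

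Finally, monotonicity gives $Q(s,t)\ge Q(2,t)$, so the ``in particular'' claim reduces to evaluating $Q$ at $s=2$. There the exponent collapses to $-2+3\log_2\big(3(t+1)\big)$ and $\binom{t+2}{t}=\binom{t+2}{2}$, so a short computation yields the closed form $Q(2,t)=\frac{27(t+1)^2}{2(t+2)}$; the inequality $Q(2,t)\ge 18$ is then equivalent to $(3t+5)(t-1)\ge 0$, which holds for $t\ge 1$ with equality exactly at $t=1$ (so $Q(2,1)=18$). I expect the main obstacle to be the derivative estimate of the second step: the several terms grow at different rates in $s$ and $t$, so the bounding must be arranged so that the single clean inequality $\ln\frac{s+t}{s}\le(\ln 2)A-\ln s$ simultaneously absorbs the harmonic sum and the constant $-2$, leaving a manifestly positive remainder.
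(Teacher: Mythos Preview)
Your proof is correct and takes a genuinely different route from the paper's. The paper exploits the identity $s^{\log_2 x}=x^{\log_2 s}$ to rewrite $Q(s,t)$ as a product of three explicit factors,
\[
Q(s,t)=\frac{s^2(t+1)^{2s}}{\binom{s+t}{t}}\cdot\left(1+\tfrac{1}{s}\right)^{2s}\cdot\big((t+1)(s+1)\big)^{-2s+(s+1)\log_2 s},
\]
and then asserts that each factor is increasing on $s\ge 2$. This decomposition is elegant but not obvious, and the monotonicity of the third factor (whose exponent changes sign near $s=2$) still hides a small derivative computation behind the phrase ``routine to check''. Your approach is the direct one: differentiate $\ln Q$ and bound the pieces. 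The key step is the inequality $\ln\frac{s+t}{s}\le(\ln 2)A-\ln s$, which simultaneously absorbs the harmonic sum coming from the binomial coefficient and produces an extra $+\ln s$; the residual lower bound is then visibly increasing in each of $A$ and $\ln s$ separately, so its minimum over $s\ge 2$, $t\ge 1$ is attained at $(s,t)=(2,1)$. Your method is more computational but fully explicit and self-contained, whereas the paper's is shorter once the factorization is spotted but leaves more to the reader. Both arguments reduce the ``in particular'' claim to the same closed form $Q(2,t)=\frac{27(t+1)^2}{2(t+2)}$, and your verification $(3t+5)(t-1)\ge 0$ is identical in content to the paper's.
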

\begin{proof}
	Note that $L(s+t,t)=t+1+(s+1)\cdot\log_2s\cdot\log_s(t+1)(s+1)$, then
	\begin{align*}
	Q(s,t)&=s^{2-2s}((t+1)(s+1))^{(s+1)\log_2s}\binom{s+t}{t}^{-1}\\
	&=\frac{s^2(t+1)^{2s}}{\binom{s+t}{t}}\left(1+\frac{1}{s}\right)^{2s}((t+1)(s+1))^{-2s+(s+1)\log_2s}.
	\end{align*}
	Now it is routine to check that each of the three factors above is increasing on $s\geq2$, then so is $Q(s,t)$. Hence $Q(s,t)\geq Q(2,t)=\frac{27(t+1)^2}{2(t+2)}\geq18.$
\end{proof}
To proceed, we refer to Rennie and Dobson \cite{Rennie-Dobson-1969}, who established useful bounds for the Stirling partition number.
\begin{lemma}[\cite{Rennie-Dobson-1969}]\label{lemmalb}
	For $1\leq r<n$, we have
	\begin{equation*}
		\spn{n}{r}\geq \frac{1}{2}(r^2+r+2)r^{n-r-1}-1.
	\end{equation*}
\end{lemma}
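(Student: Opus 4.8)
The plan is to prove the bound by induction on $n$ for each fixed $r$, using the recurrence (\ref{equrecurrence}). Write $P(n,r)$ for the claimed inequality $\spn{n}{r}\geq\frac{1}{2}(r^2+r+2)r^{n-r-1}-1$. For the base case $n=r+1$ I would observe that partitioning an $(r+1)$-set into $r$ non-empty blocks forces exactly one block of size two together with $r-1$ singletons, so that $\spn{r+1}{r}=\binom{r+1}{2}=\frac{1}{2}(r^2+r)$, which is precisely $\frac{1}{2}(r^2+r+2)r^{0}-1$. Thus the base case holds with equality, and in particular no slack is available to be lost.

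For the inductive step I assume $P(n-1,r)$ with $n\geq r+2$ and apply (\ref{equrecurrence}) to write $\spn{n}{r}=\spn{n-1}{r-1}+r\spn{n-1}{r}$. Feeding the induction hypothesis into the second summand gives $r\spn{n-1}{r}\geq\frac{1}{2}(r^2+r+2)r^{n-r-1}-r$, so it remains only to establish the auxiliary lower bound $\spn{n-1}{r-1}\geq r-1$. Combining the two displays then yields $\spn{n}{r}\geq\spn{n-1}{r-1}+\frac{1}{2}(r^2+r+2)r^{n-r-1}-r\geq\frac{1}{2}(r^2+r+2)r^{n-r-1}-1$, which is exactly $P(n,r)$.

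The auxiliary bound $\spn{n-1}{r-1}\geq r-1$ is handled by reduction to monotonicity. For $r=1$ it reads $\spn{n-1}{0}=0\geq 0$ and is trivial. For $r\geq 2$ we have $n-1\geq r+1>r$, so the monotonicity $\spn{m}{s}\geq\spn{m-1}{s}$ in the top argument, which is immediate from (\ref{equnkn-1k}) when $s\geq 2$ and is vacuous when $s\leq 1$, gives $\spn{n-1}{r-1}\geq\spn{r}{r-1}=\binom{r}{2}=\frac{r(r-1)}{2}\geq r-1$. This closes the induction and establishes $P(n,r)$ for all $1\leq r<n$.

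I expect the main obstacle to be bookkeeping rather than conceptual: the argument hinges on the base case being an exact equality and on the auxiliary estimate $\spn{n-1}{r-1}\geq r-1$ holding uniformly across the degenerate case $r=1$ and the boundary $n=r+2$. The one point deserving care is the validity of the monotonicity step in the relevant range, which is why I split off $s\leq 1$ (where it is trivial) from $s\geq 2$ (where it follows from (\ref{equnkn-1k})). Everything else is a routine manipulation of the recurrence.
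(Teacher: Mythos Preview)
Your proof is correct. The induction on $n$ with base case $n=r+1$ works cleanly: the base case is an exact equality, and in the inductive step the recurrence (\ref{equrecurrence}) reduces the task to the auxiliary inequality $\spn{n-1}{r-1}\geq r-1$, which you verify separately for $r=1$, $r=2$ and $r\geq 3$ via trivial evaluation or monotonicity from (\ref{equnkn-1k}). All boundary cases are handled.

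There is nothing to compare against here: the paper does not supply its own proof of this lemma but simply quotes it from Rennie and Dobson~\cite{Rennie-Dobson-1969}. Your argument is therefore a welcome self-contained substitute. It is essentially the same short induction that appears in the original reference, so you are not doing anything exotic, but you have recovered it correctly and with appropriate care at the degenerate endpoints.
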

\begin{lemma}\label{lemmamono}
	Let $k\geq t+2$ and $n\geq L(k,t)$. The following hold.
\begin{itemize}
 \item[\rm(i)]\;The functions $f(m,k,t,n)$ and $(k-t+1)^{m-t}\spn{n-m}{k-m}$ are both strictly decreasing as $m\in[t,k-1]$ increases.
 
 \item[\rm(ii)]\;We have $
 	f(m,k,t,n)\leq\spn{n-t}{k-t}$
 for $t\leq m\leq k$, with equality only if $m=t$.
\end{itemize}	
\end{lemma}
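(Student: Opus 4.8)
The plan is to reduce both parts to a single estimate on consecutive ratios and then treat the boundary value $m=k$ separately. Write $g(m):=(k-t+1)^{m-t}\spn{n-m}{k-m}$, so that $f(m,k,t,n)=\binom{m}{t}g(m)$ for $t\leq m\leq k-1$. For part (i) I would compare consecutive values: for $t\leq m\leq k-2$,
$$\frac{f(m+1,k,t,n)}{f(m,k,t,n)}=\frac{m+1}{m+1-t}\,(k-t+1)\,\frac{\spn{n-m-1}{k-m-1}}{\spn{n-m}{k-m}},$$
and likewise $g(m+1)/g(m)=(k-t+1)\spn{n-m-1}{k-m-1}/\spn{n-m}{k-m}$ (the same ratio without the binomial factor, hence even smaller). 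By Lemma \ref{lemmainequntkt} applied to $\spn{n-m}{k-m}$ (valid since $n>k$ and $k-m\geq2$), each ratio is strictly below $1$ once I establish the key inequality
$$2^{\frac{n-m-1}{k-m-1}}-1>(t+1)(k-t+1)\qquad(t\leq m\leq k-2),$$
because $\frac{m+1}{m+1-t}\leq t+1$.

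The crux is this key inequality. I would note that the exponent $\frac{n-m-1}{k-m-1}=1+\frac{n-k}{k-m-1}$ increases with $m$, while the factor $\frac{m+1}{m+1-t}$ decreases with $m$, so the worst case for both ratios is $m=t$ and it suffices to verify the inequality there. At $m=t$ the hypothesis $n\geq L(k,t)$ gives $n-t-1\geq(k-t+1)\log_2(t+1)(k-t+1)$, whence, writing $P=(t+1)(k-t+1)$,
$$2^{\frac{n-t-1}{k-t-1}}\geq P^{\frac{k-t+1}{k-t-1}}=P\cdot P^{\frac{2}{k-t-1}}.$$
Applying $e^x\geq1+x$ in the form $P^{2/(k-t-1)}\geq1+\frac{2\ln P}{k-t-1}$, the surplus satisfies $P\big(P^{2/(k-t-1)}-1\big)\geq\frac{2P\ln P}{k-t-1}>1$, since $P/(k-t-1)\geq t+1\geq2$ and $\ln P\geq\ln 6$. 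Thus $2^{\frac{n-t-1}{k-t-1}}>P+1$, which is the required strengthening, and part (i) follows for both functions.

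For part (ii), the range $t\leq m\leq k-1$ is immediate from part (i) together with the identity $f(t,k,t,n)=\spn{n-t}{k-t}$, which simultaneously pins down the equality case. The genuinely separate case is $m=k$: since $f(k,k,t,n)/f(k-1,k,t,n)=k(k-t+1)/(k-t)>1$, the function \emph{jumps up} at $m=k$, so monotonicity is useless and a direct bound is needed. Here I would set $s=k-t\geq2$ and combine the two auxiliary lemmas. Lemma \ref{lemmaqkt} (with this $s$) yields $\binom{k}{t}\leq\frac1{18}s^{L(k,t)-2s-t+1}$, hence $f(k,k,t,n)=(s+1)^s\binom{k}{t}\leq\frac1{18}(s+1)^s s^{L(k,t)-2s-t+1}$, while Lemma \ref{lemmalb} gives $\spn{n-t}{k-t}\geq\frac12 s^{\,n-t-s+1}-1$. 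Because $n\geq L(k,t)$ forces $s^{\,n-t-s+1}\geq s^s\,s^{L(k,t)-2s-t+1}$, it remains to check $s^{L(k,t)-2s-t+1}\big(\tfrac12 s^s-\tfrac1{18}(s+1)^s\big)>1$; this holds since $(1+1/s)^s<e$ makes the bracket at least $0.34\,s^s$ while the prefactor equals $Q(s,t)\binom{k}{t}\geq18$. Thus $f(k,k,t,n)<\spn{n-t}{k-t}$, completing the proof. The main obstacle is precisely this $m=k$ case, where the clean ratio argument breaks down and one must fall back on the quantitative estimates of Lemmas \ref{lemmaqkt} and \ref{lemmalb}.
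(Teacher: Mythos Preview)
Your proof is correct and follows essentially the same route as the paper. Both arguments handle part (i) by bounding the ratio $f(m+1)/f(m)$ via Lemma~\ref{lemmainequntkt}, reduce to the worst case $m=t$, and then verify the key inequality $2^{(n-t-1)/(k-t-1)}-1>(t+1)(k-t+1)$; both handle the boundary case $m=k$ in part (ii) by combining Lemma~\ref{lemmalb} with Lemma~\ref{lemmaqkt}. Your derivation of the key inequality via $e^x\ge1+x$ is slightly more explicit than the paper's, and you organize the $m=k$ estimate as two separate bounds rather than a single ratio, but these are cosmetic differences.
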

\begin{proof}
{\rm(i)}\;Let $m\in[t,k-2]$. Using Lemma \ref{lemmainequntkt}, we obtain
\begin{align*}
	q_m&:=\dfrac{f(m+1,k,t,n)}{f(m,k,t,n)}\leq\dfrac{(k-t+1)(m+1)}{m-t+1}\left(2^{\frac{n-m-1}{k-m-1}}-1\right)^{-1}\\
	&\leq(t+1)(k-t+1)\left(2^{\frac{n-t-1}{k-t-1}}-1\right)^{-1}<1,
\end{align*}
where in the last step we use $n\geq L(k,t)$ to derive  $2^{\frac{L(k,t)-t-1}{k-t-1}}>2(t+1)(k-t+1)$. Therefore, $f(m,k,t,n)$ is strictly decreasing as $m\in[t,k-1]$ increases. Then the other part follows directly, as for $t\leq m\leq k-2$, we have 
	$$\dfrac{(k-t+1)^{m+1-t}\spn{n-m-1}{k-m-1}}{(k-t+1)^{m-t}\spn{n-m}{k-m}}=\dfrac{(m-t+1)q_m}{m+1}<1.$$
	
{\rm(ii)}\;Note that $f(t,k,t,n)=\spn{n-t}{k-t}$. From (i), it remains to prove
	\begin{equation}\label{equlemmamono1}
		\spn{n-t}{k-t}>(k-t+1)^{k-t}\binom{k}{t}.
	\end{equation}
 Using Lemma \ref{lemmalb}, we obtain   $\spn{n-t}{k-t}>\frac{1}{2}(k-t)^{n-k+1}$, and then 
\begin{align*}
	\spn{n-t}{k-t}(k-t+1)^{t-k}\binom{k}{t}^{-1}&>\frac{1}{2}\left(\frac{k-t}{k-t+1}\right)^{k-t}(k-t)^{n-2k+t+1}\binom{k}{t}^{-1}\\
	&>\frac{1}{2e}(k-t)^{L(k,t)-2(k-t)-t+1}\binom{k}{t}^{-1}=\frac{1}{2e}Q(k-t,t).
\end{align*}
Then (\ref{equlemmamono1}) is true as Lemma \ref{lemmaqkt} gives  $Q(k-t,t)\geq Q(2,t)\geq18>2e$.
\end{proof}
\begin{lemma}\label{lemmalemmat+2small}
If $k\geq t+4$ and $n\geq2L(k,t)$, then
$$f(k,k,t,n)<f(t+2,k,t,n)<r(n,k,t).$$
\end{lemma}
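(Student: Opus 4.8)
The plan is to establish the two inequalities $f(k,k,t,n)<f(t+2,k,t,n)$ and $f(t+2,k,t,n)<r(n,k,t)$ separately. A warning is in order at the outset: although Lemma~\ref{lemmamono}~(i) shows that $f(m,k,t,n)$ is strictly decreasing on $m\in[t,k-1]$, this monotonicity does \emph{not} extend to $m=k$. Indeed, since $\spn{n-k+1}{1}=1$, one computes $f(k,k,t,n)/f(k-1,k,t,n)=(k-t+1)\cdot k/(k-t)>1$, so $f$ actually jumps up at $m=k$. Hence the first inequality cannot be deduced from monotonicity and must be proved by a direct comparison; this is the crux of the lemma.

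For the second inequality I would first write $f(t+2,k,t,n)=(k-t+1)^2\binom{t+2}{2}\spn{n-t-2}{k-t-2}$ and apply Lemma~\ref{lemmaspn2lkt} with $s=1$ to bound $\spn{n-t-2}{k-t-2}<(t+1)^{-2}(k-t+1)^{-2}\spn{n-t-1}{k-t-1}$. The factor $(k-t+1)^2$ cancels, leaving $f(t+2,k,t,n)<\frac{t+2}{2(t+1)}\spn{n-t-1}{k-t-1}<\spn{n-t-1}{k-t-1}$, where the last step uses $\frac{t+2}{2(t+1)}<1$. On the other hand, $k\geq t+4$ gives $k-t-1\geq3$, so Lemma~\ref{equsizerlb}~(i) yields $r(n,k,t)>\bigl(3-\tfrac{1}{(t+1)^2}\bigr)\spn{n-t-1}{k-t-1}>\spn{n-t-1}{k-t-1}$. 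Comparing the two estimates finishes this part.

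For the first inequality I would cancel the common factor $(k-t+1)^2$ and reduce to proving $(k-t+1)^{k-t-2}\binom{k}{t}<\binom{t+2}{2}\spn{n-t-2}{k-t-2}$. The left-hand side is a constant independent of $n$, whereas Lemma~\ref{lemmalb} (applicable since $k-t-2\geq2$) gives the exponential lower bound $\spn{n-t-2}{k-t-2}>\frac12(k-t-2)^{\,n-k+1}$. It then suffices to check $(k-t+1)^{k-t-2}\binom{k}{t}<\frac{(t+2)(t+1)}{4}(k-t-2)^{\,n-k+1}$. Mimicking the estimate in the proof of Lemma~\ref{lemmamono}~(ii), I would factor $(k-t-2)^{\,n-k+1}(k-t+1)^{-(k-t-2)}=\bigl(\tfrac{k-t-2}{k-t+1}\bigr)^{k-t-2}(k-t-2)^{\,n-2k+t+3}$ and use $\bigl(\tfrac{k-t-2}{k-t+1}\bigr)^{k-t-2}\geq e^{-3}$ (from $\ln(1-u)\geq -u/(1-u)$ with $u=3/(k-t+1)$) together with $n\geq2L(k,t)$ to make the exponent $n-2k+t+3$ large enough that the base-$(k-t-2)\geq2$ power dominates both $\binom{k}{t}$ and the fixed constant $4e^3/((t+1)(t+2))$.

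The main obstacle will be this last step. Unlike in Lemma~\ref{lemmamono}~(ii), the base $k-t-2$ of the growing Stirling term is \emph{smaller} than the base $k-t+1$ of the power being divided out, so one loses a fixed factor $e^{-3}$ rather than $e^{-1}$, and the resulting inequality must be verified uniformly over all $t\geq1$ and $k\geq t+4$, including the tight-looking boundary case $k=t+4$ (where $k-t-2=2$). The ample slack in the hypothesis $n\geq2L(k,t)$ — twice what the Erd\H{o}s--Ko--Rado estimate requires — is exactly what drives the exponent $n-2k+t+3$ high enough for this uniform check to succeed, in the same spirit as the bound $Q(k-t,t)\geq18$ from Lemma~\ref{lemmaqkt}.
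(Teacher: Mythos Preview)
Your proposal is correct and tracks the paper's proof closely. The second inequality is handled identically: apply Lemma~\ref{lemmaspn2lkt} with $s=1$ to get $f(t+2,k,t,n)<\spn{n-t-1}{k-t-1}$, then Lemma~\ref{equsizerlb}(i). For the first inequality both arguments start from Lemma~\ref{lemmalb} and the bound $\bigl(\tfrac{k-t-2}{k-t+1}\bigr)^{k-t-2}\geq e^{-3}$; the only divergence is in the endgame. Where you propose to verify directly that $(k-t-2)^{\,n-2k+t+3}$ dominates $\tfrac{4e^3}{(t+1)(t+2)}\binom{k}{t}$, the paper instead uses the two reductions $\binom{k}{t}^{-1}\binom{t+2}{2}\geq\tfrac{9}{5}\binom{k-2}{t}^{-1}$ and $2L(k,t)>L(k-2,t)+4$ to recognise the entire ratio as at least $\tfrac{18}{5e^3}\,Q(k-t-2,t)$, and then invokes Lemma~\ref{lemmaqkt} (with $s=k-t-2$) outright. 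That substitution $k\mapsto k-2$ is the one trick you only gesture at; it recycles machinery already in place and avoids the separate uniform check you anticipate, though your direct route is valid as well.
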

\begin{proof}
Recall that $f(t+2,k,t,n)=(k-t+1)^2\binom{t+2}{2}\spn{n-t-2}{k-t-2}$. It follows from Lemma \ref{lemmaspn2lkt} that $f(t+2,k,t,n)<\spn{n-t-1}{k-t-1}$, and thus Lemma \ref{equsizerlb} (i) yields $f(t+2,k,t,n)<r(n,k,t)$. It remains to prove $f(t+2,k,t,n)>f(k,k,t,n)$. By Lemma \ref{lemmalb} we infer
	\begin{align*}
f(t+2,k,t,n)>\frac{1}{2}\binom{t+2}{2}(k-t+1)^2(k-t-2)^{n-k+1}.
	\end{align*}
Since $k\geq t+4$, we have $$\binom{k}{t}^{-1}=\frac{(k-t)(k-t-1)}{k(k-1)}\binom{k-2}{t}^{-1}\geq\frac{12}{(t+4)(t+3)}\binom{k-2}{t}^{-1},$$
and so $\binom{k}{t}^{-1}\binom{t+2}{2}\geq\frac{9}{5}\binom{k-2}{t}^{-1}$. In addition, it is easy to check that $2L(k,t)>2L(k-2,t)>L(k-2,t)+4$. Combining these with Lemma \ref{lemmaqkt}, we obtain that
	\begin{align*}
	\frac{f(t+2,k,t,n)}{f(k,k,t,n)}&>\frac{1}{2}\binom{t+2}{2}\left(\frac{k-t-2}{k-t+1}\right)^{k-t-2}(k-t-2)^{2L(k,t)-2k+t+3}\binom{k}{t}^{-1}\\
	&>\frac{9}{10e^3}(k-t-2)^{L(k-2,t)-2(k-t-2)-t+3}\binom{k-2}{t}^{-1}\\
	&\geq\frac{18}{5e^3}Q(k-t-2,t)\geq\frac{18}{5e^3}Q(2,t)\geq\frac{324}{5e^3}>1.
	\end{align*}
	Thus $f(t+2,k,t,n)>f(k,k,t,n)$.
\end{proof}
\begin{lemma}\label{lemmaqt}
	If $k\geq t+3$ and $n\geq 2L(k,t)$, then $\spn{n-t-s}{k-t-s}+st$ is decreasing as $s\in[2,k-t-1]$ increases.
\end{lemma}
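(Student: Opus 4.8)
The plan is to treat the claim as a statement about the discrete difference $g(s)-g(s+1)$, where $g(s):=\spn{n-t-s}{k-t-s}+st$, and to show this difference is strictly positive for every integer $s$ with $2\le s\le k-t-2$. Writing out the definition, the additive term $st$ contributes only the constant $t$ to the difference, so
$$g(s)-g(s+1)=\spn{n-t-s}{k-t-s}-\spn{n-t-s-1}{k-t-s-1}-t.$$
Thus the entire problem reduces to showing that the gap between two consecutive diagonal Stirling partition numbers exceeds $t$.

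The key tool is Lemma \ref{lemmaspn2lkt}, which is valid exactly on the range $0\le s\le k-t-2$ into which our indices fall, and which gives $\spn{n-t-s}{k-t-s}>(t+1)^2(k-t+1)^2\spn{n-t-s-1}{k-t-s-1}$. First I would observe that $\spn{n-t-s-1}{k-t-s-1}$ is a genuine Stirling partition number with $n-t-s-1\ge k-t-s-1\ge 1$ throughout the range, hence at least $1$. Feeding this into the lemma yields
$$\spn{n-t-s}{k-t-s}-\spn{n-t-s-1}{k-t-s-1}>\bigl((t+1)^2(k-t+1)^2-1\bigr)\spn{n-t-s-1}{k-t-s-1}\ge (t+1)^2(k-t+1)^2-1.$$
Finally I would invoke the elementary bound $(t+1)^2(k-t+1)^2-1\ge (t+1)^2-1=t(t+2)>t$, which completes the inequality $g(s)-g(s+1)>0$ and hence establishes that $g$ is (strictly) decreasing on $[2,k-t-1]$.

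I do not anticipate any real obstacle here: essentially all of the quantitative content has already been packaged into Lemma \ref{lemmaspn2lkt}, and what remains is bookkeeping. The only points requiring a little care are confirming that $s$ stays inside the interval $[0,k-t-2]$ where that lemma applies and that the smaller Stirling number is at least $1$; both are immediate from $2\le s\le k-t-2$ together with $k\ge t+3$ and $n\ge 2L(k,t)$. The additive perturbation $st$ is harmless precisely because the multiplicative gap between consecutive diagonal Stirling numbers grows at least like $(t+1)^2(k-t+1)^2$, which dwarfs the constant $t$ lost in passing from $g(s)$ to $g(s+1)$.
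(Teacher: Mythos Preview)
Your argument is correct. Both you and the paper reduce to showing that the discrete difference
\[
g(s)-g(s+1)=\spn{n-t-s}{k-t-s}-\spn{n-t-s-1}{k-t-s-1}-t
\]
is positive for $2\le s\le k-t-2$, but you then diverge. The paper applies the recurrence~\eqref{equrecurrence} to rewrite this difference exactly as $(k-t-s)\spn{n-t-s-1}{k-t-s}-t$, and then bounds the Stirling number below by $\spn{n-k+1}{2}=2^{n-k}-1$, using an ad hoc estimate $2L(k,t)>k+t+28$ to conclude. You instead invoke Lemma~\ref{lemmaspn2lkt} directly to obtain a large multiplicative gap between the two diagonal Stirling numbers, and then note that the smaller one is at least $1$. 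Your route is arguably cleaner: it recycles a lemma already established for exactly this range of $s$, avoids any new numerical estimate on $L(k,t)$, and makes transparent why the $+st$ perturbation is negligible. The paper's route has the minor virtue of producing an explicit closed form for the difference via the recurrence, but the subsequent bound is more hands-on than necessary.
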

\begin{proof}
We need only to consider the case of $k\geq t+4$. By (\ref{equrecurrence}), it suffices to show that $
(k-t-s)\spn{n-t-s-1}{k-t-s}>t$ for $s\in[2,k-t-2]$. Note that $k-t-s\geq 2$ and $n\geq2L(k,t)>2(t+1)+6(k-t+1)\geq k+t+28$, then $\spn{n-t-s-1}{k-t-s}\geq\spn{n-k+1}{2}=2^{n-k}-1>2^{t+28}-1>100t$. Thus the desired inequality holds.
\end{proof}
\begin{lemma}\label{lemmaspn2lkt'}
Let $k\geq t+3$ and $n\geq2L(k,t)$. Then the following hold.
\begin{itemize}
\item[\rm(i)]$\spn{n-t-s-1}{k-t-s}>(t+1)^2(k-t+1)\spn{n-t-s-1}{k-t-s-1}$ for $0\leq s\leq k-t-2$.
\item[\rm(ii)]$\spn{n-t-3}{k-t-1}>t\spn{n-t-2}{k-t-2}$.
\end{itemize}
\end{lemma}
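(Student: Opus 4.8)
The plan is to establish (i) first and then deduce (ii) from it, using in both cases the recurrence (\ref{equrecurrence}) to convert a comparison of two Stirling numbers with the \emph{same} upper index (which none of the earlier estimates handles directly) into one with \emph{different} upper indices, where Lemma \ref{lemmaspn2lkt} and (\ref{equnkn-1k}) do apply.

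For (i), I would apply (\ref{equrecurrence}) with parameters $n-t-s$ and $k-t-s$ to write
\[
(k-t-s)\spn{n-t-s-1}{k-t-s}=\spn{n-t-s}{k-t-s}-\spn{n-t-s-1}{k-t-s-1}.
\]
Lemma \ref{lemmaspn2lkt} bounds the first term on the right below by $(t+1)^2(k-t+1)^2\spn{n-t-s-1}{k-t-s-1}$, so the whole right-hand side exceeds $\big((t+1)^2(k-t+1)^2-1\big)\spn{n-t-s-1}{k-t-s-1}$. Dividing by $k-t-s$ reduces the claim to the elementary inequality
\[
\frac{(t+1)^2(k-t+1)^2-1}{k-t-s}\ge (t+1)^2(k-t+1),
\]
which rearranges to $(t+1)^2(k-t+1)(s+1)\ge1$ and is clear since $t\ge1$, $k\ge t+3$ and $s\ge0$. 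The whole range $0\le s\le k-t-2$ is covered, including the boundary $s=k-t-2$ where the lower index drops to $2$.

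For (ii), I would first assume $k\ge t+4$, so that $k-t-2\ge2$ and (\ref{equnkn-1k}) is available with lower index $k-t-2$. Applying (\ref{equrecurrence}) to $\spn{n-t-2}{k-t-1}$ gives
\[
(k-t-1)\spn{n-t-3}{k-t-1}=\spn{n-t-2}{k-t-1}-\spn{n-t-3}{k-t-2}.
\]
I would bound the first right-hand term below by part (i) with $s=1$, namely $\spn{n-t-2}{k-t-1}>(t+1)^2(k-t+1)\spn{n-t-2}{k-t-2}$, and bound the subtracted term above by $\spn{n-t-3}{k-t-2}<\tfrac{1}{k-t-2}\spn{n-t-2}{k-t-2}$ via (\ref{equnkn-1k}). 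After dividing by $k-t-1$, the target reduces to $(t+1)^2(k-t+1)-\tfrac{1}{k-t-2}\ge t(k-t-1)$, which holds with room since $(t+1)^2(k-t+1)-t(k-t-1)=(k-t)(t^2+t+1)+(t+1)^2+t$ is already at least $12$, while $\tfrac{1}{k-t-2}\le\tfrac12$. The remaining case $k=t+3$ is handled directly: there (ii) reads $\spn{n-t-3}{2}>t$, and since $\spn{m}{2}=2^{m-1}-1$ and $n\ge 2L(t+3,t)\ge 2t+18$ forces $n-t-4\ge t+14$, we get $\spn{n-t-3}{2}=2^{n-t-4}-1\ge2^{t+14}-1>t$.

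The main obstacle is exactly the index-matching issue: the target inequalities compare Stirling numbers sharing an upper index, whereas every available estimate moves the upper index, so the recurrence (\ref{equrecurrence}) is the essential bridge in both parts. The only genuinely delicate point is the boundary case $k=t+3$ in (ii), where a lower index of $1$ invalidates (\ref{equnkn-1k}) and a separate elementary estimate is needed.
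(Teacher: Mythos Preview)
Your argument is correct, but it differs from the paper's proof in a notable way. The paper introduces (in equation~(\ref{equlemmaspn2lkt'})) a second inequality from \cite{Kupavskii-2023}, namely $\spn{m-1}{\ell}\geq\frac{1}{\ell}\bigl(2^{\frac{m-1}{\ell-1}}-2\bigr)\spn{m-1}{\ell-1}$, which directly compares Stirling numbers with the \emph{same} upper index; this is applied to both parts, combined with the exponential bound~(\ref{equlemmaspn2lkt}). You instead avoid importing this extra estimate: the recurrence~(\ref{equrecurrence}) lets you rewrite $(k-t-s)\spn{n-t-s-1}{k-t-s}$ as a difference of two terms already controlled by Lemma~\ref{lemmaspn2lkt}, and likewise in~(ii). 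The upshot is that your proof is more self-contained, relying only on the recurrence, Lemma~\ref{lemmaspn2lkt} and~(\ref{equnkn-1k}), at the cost of a short separate treatment of the boundary case $k=t+3$ in~(ii); the paper's route is slightly more uniform once the additional cited inequality is in hand. One small cosmetic point: in~(i) you want the strict inequality $(t+1)^2(k-t+1)(s+1)>1$, which of course holds just as easily as the weak one you stated.
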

\begin{proof}
We need an inequality proved in the proof of {\rm\cite[Lemma 19]{Kupavskii-2023}}, that is, for $m\geq\ell\geq2$, 
\begin{equation}\label{equlemmaspn2lkt'}
\spn{m-1}{\ell}\geq\frac{1}{\ell}\left(2^{\frac{m-1}{\ell-1}}-2\right)\spn{m-1}{\ell-1}.
\end{equation}

(i)\;Note that (\ref{equlemmaspn2lkt}) gives $2^{\frac{n-t-s-1}{k-t-s-1}}>(t+1)^2(k-t+1)^2$, then it is routine to check that $\frac{1}{k-t-s}\left(2^{\frac{n-t-s-1}{k-t-s-1}}-2\right)>(t+1)^2(k-t+1)$, and then (i) holds from (\ref{equlemmaspn2lkt'}).

(ii)\;Firstly, by setting $s=2$ in (i), we get  $\spn{n-t-3}{k-t-2}>\spn{n-t-3}{k-t-3}$. It follows from  (\ref{equrecurrence}) that $t\spn{n-t-2}{k-t-2}<t(k-t-1)\spn{n-t-3}{k-t-2}$. On the other hand, by the same argument as presented in the proof of Lemma \ref{lemmaspn2lkt}, we have $2^{\frac{n-t-3}{k-t-2}}>2^{\frac{t}{k-t-1}}(t+1)^2(k-t+1)^2$, and then by (\ref{equlemmaspn2lkt'}) we get   $\spn{n-t-3}{k-t-1}>\frac{(t+1)^2(k-t+1)^2-2}{k-t-1}\spn{n-t-3}{k-t-2}>t^2(k-t+1)\spn{n-t-3}{k-t-2}$. Hence $\spn{n-t-3}{k-t-1}>t\spn{n-t-2}{k-t-2}$.
\end{proof}
\begin{lemma}\label{lemmathm34}
If $k\geq t+3$ and $n\geq2L(k,t)$, then $\max\{u_1(n,k,t), u_2(n,k,t)\}<r(n,k,t).$
\end{lemma}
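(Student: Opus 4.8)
The plan is to prove the two inequalities $u_1<r$ and $u_2<r$ separately, in each case stripping away the dominant term and reducing the claim to a comparison among a handful of Stirling partition numbers, which are then controlled by the rapid-decay estimates already in hand: the crude bound $\spn{n-t-1}{k-t-1}>(k-t-1)\spn{n-t-2}{k-t-1}$ from (\ref{equnkn-1k}), the ``drop one in each index'' bound of Lemma \ref{lemmaspn2lkt}, and the ``drop only the lower index'' bound of Lemma \ref{lemmaspn2lkt'}(i). Throughout I would treat $k\geq t+4$ by these asymptotic estimates and dispose of the single remaining value $k=t+3$ by the exact evaluations $\spn{m}{2}=2^{m-1}-1$ and $\spn{m}{1}=1$ together with the recurrence (\ref{equrecurrence}).

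For $u_1<r$ I would invoke the lower bound $r>\bigl(k-t-1-(t+1)^{-2}\bigr)\spn{n-t-1}{k-t-1}+t$ of Lemma \ref{equsizerlb}(i). Subtracting the common multiple $(k-t-2)\spn{n-t-1}{k-t-1}$ recorded in (\ref{equfunu1}), the claim $u_1<r$ becomes equivalent to
$$2\spn{n-t-2}{k-t-1}+t<\Bigl(1-\tfrac{1}{(t+1)^2}\Bigr)\spn{n-t-1}{k-t-1}.$$
For $k\geq t+4$ one has $k-t-1\geq3$, so (\ref{equnkn-1k}) gives $2\spn{n-t-2}{k-t-1}<\tfrac23\spn{n-t-1}{k-t-1}$, and since $\tfrac13-\tfrac{1}{(t+1)^2}\geq\tfrac1{12}$ the additive $t$ is swallowed by the enormous factor $\spn{n-t-1}{k-t-1}$. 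For $k=t+3$ one checks $2\spn{n-t-2}{2}=\spn{n-t-1}{2}-1$, whence $u_1=2\spn{n-t-1}{2}+2t-1$ while $r(n,t+3,t)=\tfrac52\spn{n-t-1}{2}+t-\tfrac72$, giving $r-u_1=\tfrac12\spn{n-t-1}{2}-t-\tfrac52>0$.

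The genuine work is $u_2<r$. Expanding $|\mathcal{H}(n,k,t)|$ through its inclusion–exclusion form (\ref{equsizeh}) and $u_2$ through (\ref{equfunu2}), I would merge the two sums over $j=1,2,3$ using Pascal's identity $\binom{k-t}{j}-\binom{k-t-1}{j}=\binom{k-t-1}{j-1}$. The decisive feature is that the leading terms cancel exactly, leaving
\begin{align*}
r-u_2&=\spn{n-t-2}{k-t-1}-\spn{n-t-3}{k-t-1}-(k-t-1)\spn{n-t-2}{k-t-2}\\
&\quad+\binom{k-t-1}{2}\spn{n-t-3}{k-t-3}+\sum_{j=4}^{k-t}(-1)^{j-1}\binom{k-t}{j}\spn{n-t-j}{k-t-j}-t.
\end{align*}
To show positivity for $k\geq t+4$ I would split this as $(A-B-C-t)+(D+E)$ with $A=\spn{n-t-2}{k-t-1}$, $B=\spn{n-t-3}{k-t-1}$, $C=(k-t-1)\spn{n-t-2}{k-t-2}$, $D=\binom{k-t-1}{2}\spn{n-t-3}{k-t-3}$ and $E$ the alternating tail. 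Here (\ref{equnkn-1k}) gives $B<\tfrac13A$ and Lemma \ref{lemmaspn2lkt'}(i) gives $C<(t+1)^{-2}A\leq\tfrac14A$, so $A-B-C-t>0$ with room to spare; and since Lemma \ref{lemmaspn2lkt} forces $\spn{n-t-j}{k-t-j}$ to decay by a factor exceeding $(t+1)^2(k-t+1)^2$ at each step, $E$ is dominated in absolute value by a small multiple of its first term $\binom{k-t}{4}\spn{n-t-4}{k-t-4}$, which is in turn below $D$ once one compares the binomial factors and uses $(k-t+1)^2>(k-t)(k-t-3)$; hence $D+E>0$. The value $k=t+3$ is handled by the exact identities, yielding $r-u_2=2^{n-t-4}-t-2>0$ for $n\geq2L(t+3,t)$.

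I expect the main obstacle to be exactly this $u_2<r$ estimate: because the first-order terms cancel, one is forced down to the second-order behaviour, and in particular to bound the alternating tail $E$ against the single positive term $D$. This is where the sharper per-step decay of Lemma \ref{lemmaspn2lkt} (by $(t+1)^2(k-t+1)^2$ rather than the weaker factor) is indispensable, and where the small cases $k=t+3,t+4$ must be inspected by hand, since for them the tail is empty or degenerate.
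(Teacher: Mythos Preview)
Your proposal is correct and rests on the same estimates as the paper (inequality (\ref{equnkn-1k}), Lemma \ref{lemmaspn2lkt}, and Lemma \ref{lemmaspn2lkt'}(i)), but the organisation for $u_2<r$ is genuinely different. You compute the \emph{exact} difference
\[
r-u_2=h+\binom{k-t-1}{2}\spn{n-t-3}{k-t-3}+\sum_{j=4}^{k-t}(-1)^{j-1}\binom{k-t}{j}\spn{n-t-j}{k-t-j},
\]
with $h=\spn{n-t-2}{k-t-1}-\spn{n-t-3}{k-t-1}-(k-t-1)\spn{n-t-2}{k-t-2}-t$, then prove $h>0$ and separately that the residual $D+E$ is nonnegative by bounding the alternating tail against the single positive term $D$. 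The paper instead truncates the inclusion--exclusion for $|\mathcal{H}(n,k,t)|$ at $j=2$ via Lemma \ref{lemmain-ex}, which throws away both $D$ and the tail in one stroke and leaves only the cleaner inequality $r-u_2\geq h-\binom{k-t-1}{3}\spn{n-t-3}{k-t-3}$; it then shows $h$ is not merely positive but large enough (indeed $h>t^3(k-t)^3\spn{n-t-3}{k-t-3}$) to absorb that correction. The paper's route is shorter and handles all $k\geq t+3$ uniformly, with no separate treatment of $k=t+3$ or $k=t+4$; your route is more explicit about where the gain sits and makes transparent that the higher inclusion--exclusion terms are actually harmless, at the cost of a tail analysis and boundary cases. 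For $u_1<r$ the two arguments are essentially the same (both go through (\ref{equsizehlb})), except that the paper again covers $k=t+3$ in the general estimate rather than by direct evaluation.
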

\begin{proof}
By applying Lemma \ref{lemmaspn2lkt'} (i) to $s=1$, we obtain 
\begin{equation}\label{equlemmathm34}
\spn{n-t-2}{k-t-1}>(t+1)^2(k-t+1)\spn{n-t-2}{k-t-2}.
\end{equation} 
This together with (\ref{equfunr}), (\ref{equsizehlb}) and (\ref{equnkn-1k}) gives
\begin{align*}
	r(n,k,t)-u_1(n,k,t)&\geq\spn{n-t-1}{k-t-1}-\spn{n-t-2}{k-t-1}-\binom{k-t}{2}\spn{n-t-2}{k-t-2}-t\\
	&>\left((k-t-2)\cdot(t+1)^2(k-t+1)-\binom{k-t}{2}\right)\spn{n-t-2}{k-t-2}-t>0.
\end{align*}

Now consider $u_2(n,k,t)$. Set $h:=\spn{n-t-2}{k-t-1}-\spn{n-t-3}{k-t-1}-(k-t-1)\spn{n-t-2}{k-t-2}-t$ for short. Then from $\spn{n-t-3}{k-t-1}<\frac{1}{k-t-1}\spn{n-t-2}{k-t-1}$, (\ref{equlemmathm34}) and Lemma \ref{lemmaspn2lkt}, we derive   $$h>t(k-t)\spn{n-t-2}{k-t-2}>t^3(k-t)^3\spn{n-t-3}{k-t-3}.$$
 Hence $r(n,k,t)-u_2(n,k,t)\geq h-\binom{k-t-1}{3}\spn{n-t-3}{k-t-3}>0$, as desired.
\end{proof}
\begin{lemma}\label{lemmahh1v}
	Let $k\geq t+3$ and $n\geq2L(k,t)$. The following hold.
	\begin{itemize}
		\item[\rm(i)]We have $r(n,k,t)\leq|\mathcal{H}_1(n,k,t)|<|\mathcal{H}(n,k,t)|.$
		\item[\rm(ii)]If $k\geq2t+3$, then $|\mathcal{A}(n,k,t)|<r(n,k,t)$.
	\end{itemize}
\end{lemma}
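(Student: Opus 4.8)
The plan is to settle (i) by computing the exact value of $|\mathcal{H}(n,k,t)|-|\mathcal{H}_1(n,k,t)|$ and sandwiching it between $0$ and $\spn{n-t-1}{k-t-1}-\spn{n-t-2}{k-t-1}$, and to settle (ii) by playing the formula (\ref{equsizea}) for $|\mathcal{A}(n,k,t)|$ against the lower estimate (\ref{equsizehlb}) that is hidden inside $r(n,k,t)$. For (i), write $d=k-t$. I would first record a formula for $|\mathcal{H}_1(n,k,t)|$ by repeating the inclusion--exclusion that produces (\ref{equsizeh}); the only change is that the unique size-two block of $M\setminus X$ contributes $\spn{n-t-j-1}{k-t-j}$ rather than $\spn{n-t-j}{k-t-j}$ whenever it is one of the $j$ selected blocks. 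Splitting $\binom{k-t}{j}=\binom{k-t-1}{j}+\binom{k-t-1}{j-1}$ in (\ref{equsizeh}) and subtracting, every term carrying the full binomial cancels and the remainder telescopes to
\[
|\mathcal{H}(n,k,t)|-|\mathcal{H}_1(n,k,t)|=\sum_{i=0}^{d-1}(-1)^{i}\binom{d-1}{i}a_i,\qquad a_i:=\spn{n-t-i-1}{k-t-i-1}-\spn{n-t-i-2}{k-t-i-1}.
\]
Because $\spn{m}{1}=1$ and $\spn{m}{0}=0$, the top two summands vanish ($a_{d-1}=a_{d-2}=0$), while the recurrence (\ref{equrecurrence}) gives $a_i>0$ for $0\le i\le d-3$.

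Next I would control this alternating sum. Put $b_i:=\binom{d-1}{i}a_i$. If $k=t+3$, then $d=3$ and only $a_0$ survives, so the difference equals $a_0$ exactly; if $k\ge t+4$, I would verify $b_0>b_1>\cdots>b_{d-3}>0$, which amounts to $a_i/a_{i+1}>(d-1-i)/(i+1)$. This holds with enormous room to spare: from (\ref{equnkn-1k}) one has $a_i>\tfrac12\spn{n-t-i-1}{k-t-i-1}$ while trivially $a_{i+1}<\spn{n-t-i-2}{k-t-i-2}$, so Lemma \ref{lemmaspn2lkt} (taken with $s=i+1$) yields $a_i/a_{i+1}>\tfrac12(t+1)^2(k-t+1)^2$, far exceeding $(d-1-i)/(i+1)\le k-t-1$. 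The finite alternating-series bounds for a strictly decreasing positive sequence then give $0<|\mathcal{H}(n,k,t)|-|\mathcal{H}_1(n,k,t)|\le a_0$, with equality on the right exactly when $k=t+3$. The strict left inequality is $|\mathcal{H}_1(n,k,t)|<|\mathcal{H}(n,k,t)|$; since $a_0=\spn{n-t-1}{k-t-1}-\spn{n-t-2}{k-t-1}$, the right inequality together with the definition (\ref{equfunr}) gives $|\mathcal{H}_1(n,k,t)|\ge|\mathcal{H}(n,k,t)|-a_0=r(n,k,t)$, completing (i).

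For (ii), combining (\ref{equsizea}), (\ref{equfunr}) and the lower bound (\ref{equsizehlb}) reduces the claim $r(n,k,t)>|\mathcal{A}(n,k,t)|$ to
\[
(k-2t-3)\spn{n-t-1}{k-t-1}+\spn{n-t-2}{k-t-1}>\binom{k-t}{2}\spn{n-t-2}{k-t-2}.
\]
When $k\ge 2t+4$ the coefficient $k-2t-3\ge1$, and the first term alone suffices, since Lemma \ref{lemmaspn2lkt} ($s=1$) gives $\spn{n-t-1}{k-t-1}>(t+1)^2(k-t+1)^2\spn{n-t-2}{k-t-2}>\binom{k-t}{2}\spn{n-t-2}{k-t-2}$. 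The borderline case $k=2t+3$, where $k-2t-3=0$, is the delicate one: here the leading terms of $r$ and $|\mathcal{A}|$ cancel, so I would keep the term $\spn{n-t-2}{k-t-1}$ and invoke Lemma \ref{lemmaspn2lkt'}(i) ($s=1$) to get $\spn{n-t-2}{k-t-1}>(t+1)^2(k-t+1)\spn{n-t-2}{k-t-2}$, after which the claim follows from the numerical inequality $(t+1)^2(t+4)>\binom{t+3}{2}$.

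I expect the main obstacle to be the sharpness demanded by both parts. In (i) the sandwich must carry no slack on the upper end, so that subtracting exactly $a_0$ reproduces the constant built into $r$ in (\ref{equfunr}); this is precisely why one must exploit the vanishing of $a_{d-1},a_{d-2}$ and the rapid Stirling decay rather than any loose bound, and why $k=t+3$ comes out as the equality case $r=|\mathcal{H}_1|$. In (ii) the cancellation of leading terms at $k=2t+3$ leaves the inequality resting entirely on a second-order term, which is exactly where the cruder estimate Lemma \ref{equsizerlb}(i) fails to decide the sign and the refined ratio estimate Lemma \ref{lemmaspn2lkt'}(i) becomes indispensable.
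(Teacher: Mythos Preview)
Your proof is correct. Part (ii) follows essentially the same route as the paper: both reduce $r(n,k,t)>|\mathcal{A}(n,k,t)|$ to the displayed inequality via (\ref{equsizea}), (\ref{equfunr}) and (\ref{equsizehlb}), then split into the case $k\ge 2t+4$ (handled by Lemma~\ref{lemmaspn2lkt}) and the borderline $k=2t+3$ (handled by the ratio estimate of Lemma~\ref{lemmaspn2lkt'}(i), which the paper records as (\ref{equlemmathm34})).

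Part (i), however, is organized differently from the paper and is arguably cleaner. The paper treats the two inequalities separately: for $|\mathcal{H}_1|<|\mathcal{H}|$ it bounds $|\mathcal{H}_1|$ from above by the first Bonferroni truncation and $|\mathcal{H}|$ from below by (\ref{equsizehlb}); for $r\le|\mathcal{H}_1|$ it forms $D_1=|\mathcal{H}_1|+d-|\mathcal{H}|$ with $d=\spn{n-t-1}{k-t-1}-\spn{n-t-2}{k-t-1}$, bounds $|\mathcal{H}_1|+d$ from below and $|\mathcal{H}|$ from above via (\ref{equsizehub}), and checks $D_1\ge 0$ term by term. You instead compute the \emph{exact} difference $|\mathcal{H}|-|\mathcal{H}_1|=\sum_{i}(-1)^i\binom{d-1}{i}a_i$, observe that the top two terms vanish, and use a single monotonicity estimate $b_i>b_{i+1}$ (driven by Lemma~\ref{lemmaspn2lkt}) to sandwich the alternating sum between $0$ and $a_0=d(n,k,t)$. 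This yields both inequalities of (i) at once and, as a bonus, identifies $k=t+3$ as the precise equality case $r(n,k,t)=|\mathcal{H}_1(n,k,t)|$, a fact the paper's argument does not isolate. The paper's approach has the mild advantage of reusing Lemma~\ref{lemmain-ex} uniformly, but your formulation is more transparent about why the definition of $r$ in (\ref{equfunr}) is the right threshold.
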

\begin{proof}
  Set $d=d(n,k,t):=\spn{n-t-1}{k-t-1}-\spn{n-t-2}{k-t-1}$ for short. Note that $r(n,k,t)=|\mathcal{H}(n,k,t)|-d$.
	
	(i)\;By inclusion-exclusion, $\mathcal{H}_1(n,k,t)$ has size
	\begin{align*}
		\sum_{j=1}^{k-t}(-1)^{j-1}\left(\binom{k-t-1}{j}\spn{n-t-j}{k-t-j}+\binom{k-t-1}{j-1}\spn{n-t-j-1}{k-t-j}\right)+t.
	\end{align*}
	By Lemma \ref{lemmain-ex}, we have  $|\mathcal{H}_1(n,k,t)|\leq(k-t-1)\spn{n-t-1}{k-t-1}+\spn{n-t-2}{k-t-1}+t$. Then (\ref{equsizehlb}), Lemma \ref{lemmaspn2lkt}, (\ref{equnkn-1k}) and $k\geq t+3$ lead to
	$$|\mathcal{H}_1(n,k,t)|-|\mathcal{H}(n,k,t)|<-\left(1-\frac{1}{(t+1)^2}-\frac{1}{k-t-1}\right)\spn{n-t-1}{k-t-1}<0.$$
	
	Now we prove $r(n,k,t)\leq|\mathcal{H}_1(n,k,t)|$. Write $D_1:=|\mathcal{H}_1(n,k,t)|+d-|\mathcal{H}(n,k,t)|$ for short. By Lemma \ref{lemmain-ex}, $|\mathcal{H}_1(n,k,t)|+d$ is at least 
	\begin{align*}
	(k-t)\spn{n-t-1}{k-t-1}-\binom{k-t-1}{2}\spn{n-t-2}{k-t-2}-(k-t-1)\spn{n-t-3}{k-t-2}+t.
	\end{align*}
	 From (\ref{equsizeh}) and Lemma \ref{lemmain-ex}, we derive
	\begin{align}
		|\mathcal{H}(n,k,t)|\leq&\sum_{j=1}^{3}(-1)^{j-1}\binom{k-t}{j}\spn{n-t-j}{k-t-j}+t.\label{equsizehub}
	\end{align}
It follows from (\ref{equnkn-1k}) and Lemma \ref{lemmaspn2lkt} that
	\begin{align*}
		D_1&\geq(k-t-1)\spn{n-t-2}{k-t-2}-(k-t-1)\spn{n-t-3}{k-t-2}-\binom{k-t}{3}\spn{n-t-3}{k-t-3}\\
		&\geq(k-t-3)\spn{n-t-2}{k-t-2}-\binom{k-t}{3}\spn{n-t-3}{k-t-3}\geq0.
	\end{align*}
	Thus $|\mathcal{H}_1(n,k,t)|\geq r(n,k,t)$.	
	
	(ii)\;Write $D_2 :=|\mathcal{A}(n,k,t)|+d-|\mathcal{H}(n,k,t)|$ for short. Note that $|\mathcal{A}(n,k,t)|+d<(t+3)\spn{n-t-1}{k-t-1}-\spn{n-t-2}{k-t-1}$, then from (\ref{equsizehlb}) and (\ref{equlemmathm34}), we obtain 
$$D_2<-(k-2t-3)\spn{n-t-1}{k-t-1}+\left(\binom{k-t}{2}-(t+1)^2(k-t+1)\right)\spn{n-t-2}{k-t-2}.$$
If $k=2t+3$, then clearly $D_2<0$. For the case of $k\geq2t+4$, we deduce $D_2<0$ from Lemma \ref{lemmaspn2lkt}. Thus $|\mathcal{A}(n,k,t)|<r(n,k,t)$.
\end{proof}
 \begin{lemma}\label{lemmawholargest}
	Let $k\geq t+3$ and $n\geq2L(k,t)$. The following hold.
	\begin{itemize}
		\item[\rm(i)]If $k\geq2t+3$, then $|\mathcal{H}(n,k,t)|>|\mathcal{A}(n,k,t)|.$
		\item[\rm(ii)]If $(k,t)=(4,1)$, then $|\mathcal{H}(n,k,t)|=|\mathcal{A}(n,k,t)|$.
		\item[\rm(iii)]If $k\leq2t+2$ and $(k,t)\neq(4,1)$, then $|\mathcal{A}(n,k,t)|>|\mathcal{H}(n,k,t)|.$
	\end{itemize}  
\end{lemma}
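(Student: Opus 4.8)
The plan is to compare the two sizes through their explicit expressions (\ref{equsizea}) and (\ref{equsizeh}). The dominant term of $|\mathcal{H}(n,k,t)|$ is $(k-t)\spn{n-t-1}{k-t-1}$, while that of $|\mathcal{A}(n,k,t)|$ is $(t+2)\spn{n-t-1}{k-t-1}$. Hence, to leading order, the comparison is governed by the sign of $(k-t)-(t+2)=k-2t-2$, and the three regimes $k\geq2t+3$, $k\leq2t+1$, and $k=2t+2$ correspond exactly to this quantity being positive, negative, and zero. The workhorse throughout is Lemma \ref{lemmaspn2lkt}, which guarantees that $\spn{n-t-1}{k-t-1}$ exceeds $(t+1)^2(k-t+1)^2\spn{n-t-2}{k-t-2}$, so that whenever a leading coefficient is at least $1$ it absorbs every lower-order term.

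For (i), I would assume $k\geq2t+3$, so that $k-2t-2\geq1$. Subtracting (\ref{equsizea}) from the lower bound (\ref{equsizehlb}) yields
$$|\mathcal{H}(n,k,t)|-|\mathcal{A}(n,k,t)|\geq(k-2t-2)\spn{n-t-1}{k-t-1}+\left(t+1-\binom{k-t}{2}\right)\spn{n-t-2}{k-t-2}+t.$$
Bounding the (possibly negative) middle coefficient below by $-\binom{k-t}{2}$ and using $\binom{k-t}{2}<(t+1)^2(k-t+1)^2$ together with Lemma \ref{lemmaspn2lkt} shows the right side is strictly positive. The subcase $t+3\leq k\leq2t+1$ of (iii) is symmetric: now $2t+2-k\geq1$, and subtracting the upper bound (\ref{equsizehub}) for $|\mathcal{H}(n,k,t)|$ from (\ref{equsizea}) leaves a positive leading term $(2t+2-k)\spn{n-t-1}{k-t-1}$ that dominates the $\spn{n-t-2}{k-t-2}$, $\spn{n-t-3}{k-t-3}$ and constant corrections, again by Lemma \ref{lemmaspn2lkt}.

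The delicate regime is $k=2t+2$, which covers the remainder of (iii) together with (ii); here the leading $(t+2)\spn{n-t-1}{t+1}$ terms of $|\mathcal{A}|$ and of the $j=1$ summand in (\ref{equsizeh}) cancel exactly, so the Bonferroni bounds no longer suffice and I would use the exact inclusion--exclusion. After cancellation,
$$|\mathcal{A}(n,2t+2,t)|-|\mathcal{H}(n,2t+2,t)|=\frac{t(t+1)}{2}\spn{n-t-2}{t}-\sum_{j=3}^{t+2}(-1)^{j-1}\binom{t+2}{j}\spn{n-t-j}{t+2-j}-t,$$
where the coefficient $\tfrac{t(t+1)}{2}=\binom{t+2}{2}-(t+1)$. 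The tail is an alternating sum whose terms decrease rapidly (by Lemma \ref{lemmaspn2lkt}), hence it lies strictly between $0$ and its first term $\binom{t+2}{3}\spn{n-t-3}{t-1}$. For $t\geq2$ the leading term $\tfrac{t(t+1)}{2}\spn{n-t-2}{t}$ dominates both this tail (Lemma \ref{lemmaspn2lkt} with $s=2$) and the constant $t$, giving strict positivity and hence (iii). For $t=1$, i.e.\ $(k,t)=(4,1)$, the coefficient is $\tfrac{t(t+1)}{2}=1$, the tail vanishes (each $\spn{n-t-j}{t+2-j}$ with $j\geq3$ equals $\spn{n-4}{0}=0$), and the expression collapses to $1\cdot\spn{n-3}{1}-0-1=0$, which is (ii).

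The main obstacle is precisely this boundary case: the exact equality in (ii) is forced by the numerical coincidence that the second-order coefficient $\tfrac{t(t+1)}{2}$ equals the constant correction $t$ exactly when $t=1$. This is why a pure leading-order or Bonferroni argument cannot decide the sign at $k=2t+2$, and one is obliged to isolate the genuine second-order term and track the small constant $t$ carefully; for all other $(k,t)$ the rapid decay of Stirling partition numbers makes the leading term decisive.
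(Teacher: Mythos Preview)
Your proposal is correct and follows essentially the same approach as the paper. The only cosmetic differences are that for (i) the paper simply invokes Lemma~\ref{lemmahh1v} (whose proof contains exactly your leading-term comparison via (\ref{equsizehlb}) and Lemma~\ref{lemmaspn2lkt}) rather than repeating the estimate, and for the case $k=2t+2$ the paper uses the three-term Bonferroni bound (\ref{equsizehub}) directly, arriving at the same key inequality $\binom{t+1}{2}\spn{n-t-2}{t}>\binom{t+2}{3}\spn{n-t-3}{t-1}+t$ that you obtain after bounding the full alternating tail.
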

\begin{proof}
	(i)\;This follows directly from Lemma \ref{lemmahh1v}.
	
	(ii)\;It is routine to check that $|\mathcal{H}(n,4,1)|=3\spn{n-2}{2}-2=|\mathcal{A}(n,4,1)|.$
	
	(iii)\;Note that $|\mathcal{H}(n,k,t)|<(k-t)\spn{n-t-1}{k-t-1}+t$ from (\ref{equsizeh}) and  Lemma \ref{lemmain-ex}. Hence, if $k\leq2t+1$, then Lemma \ref{lemmaspn2lkt} gives
	\begin{align*}
		|\mathcal{A}(n,k,t)|-|\mathcal{H}(n,k,t)|&>(2t+2-k)\spn{n-t-1}{k-t-1}-(t+1)\spn{n-t-2}{k-t-2}-t>0.
	\end{align*}
	
	Suppose that $k=2t+2$ and $t\geq2$. We have from Lemma \ref{lemmaspn2lkt}, $t(t+2)<(t+1)^2$ and $k-t+1>4$ that
	\begin{align*}
		\binom{t+2}{3}\spn{n-t-3}{k-t-3}&<\frac{t(t+2)}{6(t+1)(k-t+1)^2}\spn{n-t-2}{k-t-2}<\frac{t+1}{96}\spn{n-t-2}{k-t-2}.
	\end{align*}
	This together with (\ref{equsizea}) and (\ref{equsizehub}) yields
	\begin{align*}
		|\mathcal{A}(n,k,t)|-|\mathcal{H}(n,k,t)|&\geq\binom{t+1}{2}\spn{n-t-2}{k-t-2}-\binom{t+2}{3}\spn{n-t-3}{k-t-3}-t\\
		&>\frac{(t+1)(48t-1)}{96}\spn{n-t-2}{k-t-2}-t>0,
	\end{align*}
	where in the last step we use $t\geq2$. 
\end{proof}
\section*{Acknowledgments}
B. Lv is supported by National Natural Science Foundation of China (12571347 \& 12131011), and Beijing Natural Science Foundation (1252010).
\
\addcontentsline{toc}{chapter}{Bibliography}

{
	}

\begin{thebibliography}{99}
		\bibitem{Ahlswede-Khachatrian-1996}
		R. Ahlswede and L.H. Khachatrian, The complete nontrivial-intersection theorem for systems of finite sets, J. Combin. Theory Ser. A 76 (1996) 121--138.
		
		\bibitem{Ahlswede-Khachatrian-1997}
		R. Ahlswede and L.H. Khachatrian, The complete intersection theorem for systems of finite sets, European J. Combin. 18 (1997) 125--136.
		
		\bibitem{B}
		V. Blinovsky, Complete intersection theorem and complete nontrivial intersection theorem for a system of set partitions, in: I. Alth{\"o}fer, H. Boche, C. Deppe, R. Ezzine, J. Stoye, U. Tamm, A. Winter and R. W. Yeung (Eds.), Information theory and related fields, Springer Cham, 2025, 69--97.
		
		\bibitem{Lv-2021}
		M. Cao, B. Lv and K. Wang, The structure of large non-trivial $t$-intersecting families of
		finite sets, European J. Combin. 97 (2021) 103373.
		
		\bibitem{Deza-Frankl-1983} M. Deza and P. Frankl, The Erd\H{o}s-Ko-Rado theorem--22 years later, SIAM J. Algebraic Discrete Methods 4 (1983) 419--431.
		
		\bibitem{Erdos-Ko-Rado-1961}
		P. Erd\H{o}s, C. Ko and R. Rado, Intersection theorems for systems of finite sets, Quart. J. Math. Oxf. 2 (12) (1961) 313--320.
		
		\bibitem{Erdos-Szekely-2000}
		 P.L. Erd\H{o}s and L.A. Sz\'{e}kely, Erd\H{o}s-Ko-Rado theorems of higher order, in: I. Alth{\"o}fer, N. Cai, G. Dueck, L. Khachatrian, M.S. Pinsker, A. S\H{a}rk{\"o}zy, I. Wegener and Z. Zhang
		(Eds.), Numbers, Information and Complexity, Springer US, 2000, 117--124.
		
		\bibitem{Frankl-1976}
		P. Frankl, The Erd\H{o}s-Ko-Rado theorem is true for $n = ckt$, in: Combinatorics, Vol. I, Proc. Fifth Hungarian Colloq., Keszthey, 1976, in: Colloq. Math. Soc. J\'{a}nos Bolyai, vol. 18, North-Holland, 1978, 365--375.
		
		\bibitem{Frankl-1978}
		P. Frankl, On intersecting families of finite sets, J. Combin. Theory Ser. A 24 (1978) 146--161.
		
		\bibitem{Frankl--Furedi-1991}
		P. Frankl and Z. F\"{u}redi, Beyond the Erd\H{o}s-Ko-Rado theorem, J. Combin. Theory Ser. A 56 (1991) 182--194.
		
		\bibitem{Frankl-Tokushige-2016}
		P. Frankl and N. Tokushige, Invitation to intersection problems for finite sets, J.
		Combin. Theory Ser. A 144 (2016) 157--211.
		
		\bibitem{Frankl-Tokushige-book}
		P. Frankl and N. Tokushige, Extremal Problems for Finite Sets, American Mathematical Society, 2018.
		
		\bibitem{Ge-Xu-Zhao}
		G. Ge, Z. Xu and X. Zhao, Algebraic approach to stability results for Erd\H{o}s-Ko-Rado theorem, arXiv:2410.22676.
		
		\bibitem{Godsil-Meagher-book}
		C. Godsil and K. Meagher, Erd\H{o}s-Ko-Rado Theorems: Algebraic Approaches, Cambridge University Press, 2015.
		
		\bibitem{Godsil-Meagher-2017}
		C. Godsil and K. Meagher, An algebraic proof of the Erd\H{o}s–Ko–Rado theorem for intersecting families of perfect matchings, Ars Math. Contemp. 12 (2017), N2, 205--217.
		
		\bibitem{Han-Kohayakawa}
		J. Han and Y. Kohayakawa, The maximum size of a non-trivial intersecting uniform family that is not a subfamily of the Hilton-Milner family, Proc. Amer. Math. Soc. 145(1) (2017) 73--87.
		
		\bibitem{Hilton-Milner-1967}
		A.J.W. Hilton and E.C. Milner, Some intersection theorems for systems of finite sets, Quart. J. Math. Oxf. 2 (18) (1967) 369--384.
		
		\bibitem{Huang-Peng}
		Y. Huang and Y. Peng, Stability of intersecting families, European J. Combin. 115 (2024) 
		103774.

		\bibitem{Kostochka-Mubayi}
		A. Kostochka and D. Mubayi, The structure of large intersecting families, Proc. Amer. Math. Soc. 145 (6) (2017) 2311--2321.
		
		\bibitem{Ku-Renshaw-2008}
		C.Y. Ku and D. Renshaw, Erd\H{o}s--Ko--Rado theorems for permutations and set partitions, J. Combin. Theory Ser. A 115 (2008) 1008--1020.
		
		
		\bibitem{Ku-Wong-2013}
		C.Y. Ku and K.B. Wong, An analogue of the Hilton–Milner theorem for set partitions, J. Combin. Theory Ser. A 120 (2013) 1508--1520.
		
		\bibitem{Ku-Wong-2015}
		C.Y. Ku and K.B. Wong, A Deza-Frankl type theorem for set partitions, Electron. J. Combin. 22 (1) (2015) \#P1.84.
		
		\bibitem{Ku-Wong-2020}
		C.Y. Ku and K.B. Wong, Erd\H{o}s-Ko-Rado theorems for set partitions with certain block size, J. Combin. Theory Ser. A 172 (2020) 105180. 
		
		\bibitem{Kupavskii-2018}
		A. Kupavskii, Structure and properties of large intersecting families, arXiv:1810.00920.
		
		\bibitem{Kupavskii-2023}
		A. Kupavskii, Erd\H{o}s--Ko--Rado type results for partitions via spread approximations, arXiv:2309.00097.
		
		\bibitem{Kupavskii-2024}
		A. Kupavskii and D. Zaharov, Spread approximations for forbidden intersections problems, Adv. Math. 445 (2024) 109653. 

		\bibitem{Kupavskii-2025}
		A. Kupavskii, Structure of non-trivial intersecting families, Proc. Amer. Math. Soc. 153 (2025) 2863--2873. 
		
		\bibitem{Lindzey-2017}
		N. Lindzey, Erd\H{o}s--Ko--Rado for perfect matchings, European J. Combin. 65 (2017) 130--142.
		
		\bibitem{Meagher-Moura-2005}
		K. Meagher and L. Moura, Erd\H{o}s–Ko–Rado theorems for uniform set-partition systems, Electron. J. Combin. 12 (2005) \#R40.
		
		\bibitem{Rennie-Dobson-1969}
		B.C. Rennie and A.J. Dobson, On Stirling numbers of the second kind, J. Combin. Theory 7 (1969) 116--121.
		
		\bibitem{Wilson-1984}
		R.M. Wilson, The exact bound in the Erd\H{o}s-Ko-Rado theorem, Combinatorica 4 (1984) 247--257.
		
		\bibitem{Wu et al.}
		Y. Wu, Y. Li, L. Feng, J. Liu and G. Yu, Stabilities of intersecting families revisited, Discrete Math. 349 (2026) 114654.
		
	\end{thebibliography}
\end{document}